\RequirePackage{silence}
\WarningFilter{remreset}{The remreset package}
\documentclass[11pt]{article}

\usepackage[margin= 2.0 cm, bottom=20mm,footskip=12mm,top=20mm]{geometry}

\usepackage{amsthm}
\usepackage{amsmath}
\usepackage{amssymb}
\usepackage{setspace}
\usepackage{mathtools}
\usepackage{verbatim}
\usepackage{csquotes}
\usepackage{graphicx}
\usepackage[hidelinks]{hyperref}
\usepackage{thm-restate}
\usepackage{cleveref}
\usepackage{graphicx}
\usepackage{enumitem}
\usepackage{framed}
\usepackage{subcaption}
\usepackage{floatrow}
\usepackage[T1]{fontenc}
\floatsetup{ 
  heightadjust=object,
  valign=c
}



\usepackage{tikz}
\usepackage{mathdots}
\usepackage{xcolor}
\usepackage{diagbox}
\usepackage{colortbl}
\usepackage[absolute,overlay]{textpos}
\usetikzlibrary{calc}
\usetikzlibrary{decorations.pathreplacing}
\usetikzlibrary{positioning,patterns,decorations.pathmorphing}
\usetikzlibrary{arrows,shapes,positioning}
\usetikzlibrary{decorations.markings}
\def \vx {circle[radius = .12][fill = black]}

\def \smvx {circle[radius = .08][fill = black]}

\tikzstyle{edge}=[very thick]
\definecolor{bostonuniversityred}{rgb}{0.8, 0.0, 0.0}
\definecolor{arsenic}{rgb}{0.23, 0.27, 0.29}
\tikzstyle{diredge}=[postaction={decorate,decoration={markings,
		mark=at position .95 with {\arrow[scale = 1]{stealth};}}}]

\tikzstyle{spring}=[thick,decorate,decoration={zigzag,pre length=0.2cm,post
  length=0.2cm,segment length=6}]

\newcommand{\defPt}[3]{
	\def \pt {(#1, #2)}
	\coordinate [at = \pt, name = #3];
}

\newcommand{\defPtm}[2]{

	\coordinate [at = #1, name = #2];
}

\newcommand{\fitellipsis}[2] 
{\draw [fill=green]let \p1=(#1), \p2=(#2), \n1={atan2(\y2-\y1,\x2-\x1)}, \n2={veclen(\y2-\y1,\x2-\x1)}
    in ($ (\p1)!0.5!(\p2) $) ellipse [ x radius=\n2/2+0cm, y radius=0.1cm, rotate=\n1];
}

\setlength{\parskip}{\smallskipamount}
\setlength{\parindent}{0pt}

\addtolength{\intextsep}{2pt} 
\addtolength{\abovecaptionskip}{5pt}
\addtolength{\belowcaptionskip}{-5pt}
\captionsetup{width=0.8\textwidth, labelfont=bf, parskip=5pt}

\setstretch{1.07}

\theoremstyle{plain}

\newtheorem*{thm*}{Theorem}
\newtheorem{thm}{Theorem}
\Crefname{thm}{Theorem}{Theorems}

\newtheorem*{lem*}{Lemma}
\newtheorem{lem}[thm]{Lemma}
\Crefname{lem}{Lemma}{Lemmas}

\newtheorem*{claim*}{Claim}
\newtheorem{claim}{Claim}
\crefname{claim}{Claim}{Claims}
\Crefname{claim}{Claim}{Claims}

\Crefname{prop}{Proposition}{Propositions}

\crefname{cor}{Corollary}{Corollaries}

\newtheorem{conj}[thm]{Conjecture}
\crefname{conj}{Conjecture}{Conjectures}

\Crefname{qn}{Question}{Questions}

\Crefname{obs}{Observation}{Observations}

\Crefname{ex}{Example}{Examples}

\theoremstyle{definition}

\Crefname{prob}{Problem}{Problems}

\Crefname{defn}{Definition}{Definitions}

\newtheorem*{defn*}{Definition}

\theoremstyle{remark}

\newcommand{\floor}[1]{
    \left\lfloor #1 \right\rfloor
}

\expandafter\def\expandafter\normalsize\expandafter{%
    \normalsize
    \setlength\abovedisplayskip{4pt}
    \setlength\belowdisplayskip{4pt}
    \setlength\abovedisplayshortskip{4pt}
    \setlength\belowdisplayshortskip{4pt}
}

\usepackage[square,sort,comma,numbers]{natbib}
\setlength{\bibsep}{1 pt plus 20 ex}

 \setlist[itemize]{leftmargin=*}

\newcommand{\tbpp}{top-bottom path-pair}
\newcommand{\tbpps}{top-bottom path-pairs}
\newcommand{\eps}{\varepsilon}

\DeclareFontFamily{OT1}{pzc}{}
\DeclareFontShape{OT1}{pzc}{m}{it}{<-> s * [1.10] pzcmi7t}{}
\DeclareMathAlphabet{\mathpzc}{OT1}{pzc}{m}{it}

\title{\vspace{-0.9cm} Cycles of many lengths in Hamiltonian graphs}

\author{
Matija Buci\'c\thanks{Department of Mathematics, ETH, Z\"urich, Switzerland. Email: \href{mailto:matija.bucic@math.ethz.ch} {\nolinkurl{matija.bucic@math.ethz.ch}}.}
 \and
Lior Gishboliner\thanks{Department of Mathematics, ETH, Z\"urich, Switzerland. Email: \href{mailto:lior.gishboliner@math.ethz.ch} {\nolinkurl{lior.gishboliner@math.ethz.ch}}.}
\and
Benny Sudakov\thanks{Department of Mathematics, ETH, Z\"urich, Switzerland. Email:
\href{mailto:benjamin.sudakov@math.ethz.ch} {\nolinkurl{benjamin.sudakov@math.ethz.ch}}.
Research supported in part by SNSF grant 200021\_196965.}
}

\date{}
\begin{document}

\maketitle

\begin{abstract}
In 1999, Jacobson and Lehel conjectured that for $k \geq 3$, every $k$-regular Hamiltonian graph has cycles of at least linearly many different lengths. This was further strengthened by Verstra\"{e}te, who asked whether
the regularity can be replaced with the weaker condition that the minimum degree is at least $3$. 
Despite attention from various researchers, until now, the best partial result towards both of these conjectures was a $\sqrt{n}$ lower bound on the number of cycle lengths. We resolve these conjectures asymptotically, by showing that the number of cycle lengths is at least $n^{1-o(1)}$.
\end{abstract}

\section{Introduction}
The study of cycles in graphs goes back to the early days of graph theory and has been fundamental ever since.
Of particular interest are Hamilton cycles, i.e. cycles passing through all the vertices of a graph. 
Starting with the cornerstone theorem of Dirac \cite{Dirac}, there are many results giving sufficient conditions for a graph to be Hamiltonian, for some other classical examples see \cite{Bondy1980,Chvatal,CE,Fan,Ore}. In 1973, Bondy \cite{Bondy_conjecture} made the ``meta-conjecture'' that any non-trivial condition which guarantees the existence of a Hamilton cycle, should also guarantee that the given graph is pancyclic, i.e. contains cycles of all possible lengths, with possibly a simple family of exceptions. This assertion turned out to be influential, and by now there are numerous appealing results of this type. For example, Bondy himself \cite{Bondy_pancyclic} proved that Ore's sufficient condition for Hamiltonicity (that the sum of degrees of any pair of non-adjacent vertices is at least $n$), implies that the graph is either pancyclic or isomorphic to the complete bipartite graph $K_{n/2,n/2}$. Bauer and Schmeichel \cite{BS}, relying on previous results of Schmeichel and Hakimi \cite{SH}, have shown that the sufficient conditions for Hamiltonicity of Bondy \cite{Bondy1980}, Chv\'atal \cite{Chvatal} and Fan \cite{Fan} all imply pancyclicity, barring a small family of exceptions. Jackson and Ordaz \cite{JO} conjectured that any graph $G$ whose connectivity $\kappa(G)$ is strictly larger than its independence number $\alpha(G)$ must be pancyclic. This conjecture is motivated by the classical theorem of Chv\'atal and Erd\H{o}s \cite{CE} that a graph with $\kappa(G) \geq \alpha(G)$ must be Hamiltonian. An approximate form of the conjecture was proven by Keevash and Sudakov \cite{KS}, who showed that $\kappa(G) \geq 600 \alpha(G)$ is already sufficient for pancyclicity. 

Pancyclicity is just an instance of a wider class of problems, which study the properties of the set of cycle lengths of a graph with connection to other graph parameters. The set of cycle lengths of $G$ is called its {\em cycle spectrum}, and denoted $\mathcal{C}(G)$.
 There are by now numerous results relating properties of $\mathcal{C}(G)$ to various graph parameters. For example, Erd\H{o}s \cite{Erdos1992} conjectured that a graph $G$ with girth $g$ and average degree $d$ must satisfy $|\mathcal{C}(G)| \geq \Omega(d^{\lfloor \frac{g-1}{2} \rfloor})$. The case $g = 5$ was settled by Erd\H{o}s, Faudree, Rousseau, and Schelp \cite{EFRS}. Later, Sudakov and Verstra\"ete \cite{SV} proved the full conjecture in a strong form. Another example is a result of Gould, Haxell and Scott \cite{GHS} that a graph with minimum degree $cn$ must have a cycle of any even length between $4$ and $ec(G) - K$, where $ec(G)$ is the length of a longest even cycle in $G$ and $K$ is a constant depending only on $c$. We should also mention the recent work of Gao, Huo, Liu and Ma \cite{GHLM}, who proved several conjectures relating properties of $\mathcal{C}(G)$ to the minimum degree, connectivity or chromatic number of $G$.

Bondy's meta-conjecture is about conditions for Hamiltonicity which imply pancyclicity. 
A natural question in the opposite direction is as follows: Let us {\em assume} that a graph $G$ is Hamiltonian; under which assumptions can we also guarantee that $G$ is pancyclic? Since pancyclicity is sometimes too strong of a requirement, we can relax it and ask to find many cycle lengths. 
Questions of this type were first introduced by Jacobson and Lehel at the 1999 conference ``Paul Erd\H{o}s and His Mathematics'', where they asked for the minimum size of the cycle spectrum of a $k$-regular Hamiltonian graph $G$ on $n$ vertices?  
The aforementioned result of Bondy \cite{Bondy_pancyclic} implies that if $k = \lceil n/2 \rceil$, then $G$ is pancyclic unless $G = K_{n/2,n/2}$. At the other extreme, if $k = 2$ then $G$ clearly has just one cycle. 
Jacobson and Lehel conjectured that already for $k \geq 3$, the number of cycle lengths should be linear in $n$. This is best possible, since 
they also observed that one cannot expect to have pancyclicity. Indeed, assuming $2k$ divides $n$, take $\frac{n}{2k}$ disjoint copies of $K_{k,k}$, ordered in a cycle, remove an edge from each of them, and add an edge between any two consecutive copies such that the resulting graph is $k$-regular. It is not hard to see that in this construction, the possible cycle lengths are precisely all the even integers between $4$ and $2k$ and between $\frac{2n}{k}$ and $n$. This gives in total $\frac{n}{2} \cdot \frac{k-2}{k} + k$ different lengths. 

Soon after the above question was first circulated, Gould, Jacobson and Pfender proved that $|\mathcal{C}(G)| \geq \Omega(\sqrt{n})$ for every $k$-regular $n$-vertex Hamiltonian graph $G$ (with $k \geq 3$). This bound was subsequently obtained by several other authors. Yet, prior to our work, the $\sqrt{n}$ bound was the best known result. In particular, Gir{\~a}o, Kittipassorn and Narayanan \cite{GKN} remarked that improving this estimate would be of considerable interest. 
Furthermore, the following strengthening of the above conjecture of Jacobson and Lehel, which replaces the $k$-regularity condition with the assumption that the minimum degree is at least $3$, was proposed by 
Verstra\"ete \cite{Verstraete}.
\begin{conj}\label{conj:main}
An $n$-vertex Hamiltonian graph $G$ with $\delta(G) \geq 3$ has $\Omega(n)$ different cycle lengths. 
\end{conj}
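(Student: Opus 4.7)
The plan is as follows. Fix a Hamilton cycle $H = v_0 v_1 \cdots v_{n-1} v_0$ in $G$ and call any edge of $G$ not in $H$ a \emph{chord}. For a chord $e = v_i v_j$, define its \emph{span} by $s(e) = \min(|i-j|, n - |i-j|) \in \{2, \ldots, \lfloor n/2 \rfloor\}$. A single chord of span $s$ creates cycles of lengths $s+1$ and $n-s+1$, so the set $\mathcal{C}(G)$ already contains $\{s+1, n-s+1 : s \in S\}$, where $S = \{s(e) : e \text{ is a chord}\}$. Since $\delta(G)\geq 3$, the graph has at least $3n/2$ edges and hence at least $n/2$ chords, so $|S|$ and the chord multiset are both meaningful objects to exploit.

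The argument will split into regimes according to the distribution of the multiset of spans. \textbf{Regime 1:} If $|S| \geq \varepsilon n$ for some fixed $\varepsilon > 0$, then single-chord cycles already yield $\Omega(n)$ distinct cycle lengths and we are done. \textbf{Regime 2:} Otherwise $|S| = o(n)$, and by pigeonhole there is a span $s^\ast$ shared by $\Omega(n/|S|)$ chords; collect them in $\mathcal P$ and let $T \subseteq \mathbb{Z}/n\mathbb{Z}$ be the set of their left endpoints. For any pair $a,b \in T$, the two chords $v_a v_{a+s^\ast}$ and $v_b v_{b+s^\ast}$ together with arcs of $H$ form cycles whose lengths are explicit functions of $b-a$ and $s^\ast$, so the difference set $T-T$ converts directly into new cycle lengths; if $|T-T| = \Omega(n)$ we win.

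\textbf{Regime 3:} The remaining case is $|T - T| = o(n)$, in which Freiman-type additive combinatorics forces $T$ to be contained in a low-dimensional generalized arithmetic progression. Here the minimum degree condition must be leveraged to break the structure: any vertex off the dominant progression still has degree at least $3$, so its incident chord has a span different from $s^\ast$ or lands in a genuinely off-structure position. Such a chord either contributes a new element to $S$, or combines with chords of $\mathcal P$ to form two- and three-chord cycles of new lengths (via explicit arc-length formulas). An amortized counting scheme that charges cycle lengths to chord-pairs and chord-triples should then collect $\Omega(n)$ distinct values in total.

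\textbf{Main obstacle.} I expect Regime 3 to be the hard step. The Jacobson--Lehel construction saturates $\Theta(n)$ cycle lengths, so the whole analysis must be tight to a constant factor; this leaves no room for the polylogarithmic losses typical in Freiman-style structure theorems, nor for the $n^{o(1)}$ loss that the body of this paper accepts in order to obtain the $n^{1 - o(1)}$ bound. Closing this constant-factor gap appears to require a genuinely new structural insight into how repeated-span chord stacks interact with the cyclic order of $H$ — in particular, an argument that extracts many cycle lengths from an $o(n)$-doubling set $T$ in a way that is sharper than what current additive-combinatorial tools provide.
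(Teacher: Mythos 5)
The statement you are trying to prove is \Cref{conj:main}, which is left \emph{open} in this paper: the authors only establish the weaker asymptotic form (\Cref{thm:main}, giving $n^{1-o(1)}$ cycle lengths), and they explicitly state in the concluding remarks that proving a genuinely linear bound remains open even in the $3$-regular case. So there is no ``paper's proof'' to match against; you are attempting an open problem.

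Your proposal is, by your own admission, not a proof: Regime~3 is described as ``the hard step'' requiring ``a genuinely new structural insight,'' with no argument supplied. That is the central gap. But the earlier regimes also do not hold together. The trichotomy is not exhaustive: the negation of ``$|S| \geq \varepsilon n$'' is ``$|S| < \varepsilon n$,'' not ``$|S| = o(n)$,'' and in the intermediate range (e.g.\ $|S| \approx \sqrt{n}$) your pigeonhole gives a span $s^\ast$ carried by only $\Theta(\sqrt{n})$ chords, so $|T|$ is far too small for ``$|T-T| = \Omega(n)$'' to be possible, and you fall into neither Regime~2 nor Regime~3. Moreover, even in the favourable case the ``explicit arc-length formula'' only has the claimed form for \emph{crossing} pairs of same-span chords: for two same-span chords whose intervals are disjoint, the unique two-chord cycle has length $n - 2s^\ast + 2$, independent of $b-a$, so $T-T$ does not convert into cycle lengths. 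You would additionally need $s^\ast$ to be large and $T$ to be localized enough that many pairs actually cross, which you do not address. Finally, the Jacobson--Lehel construction shows the target bound is tight up to constants, so the Freiman-type losses you anticipate in Regime~3 are not a mere inconvenience but a structural obstacle to this route. By contrast, the paper sidesteps additive combinatorics entirely: it decomposes the cycle into a parallel collection of section-pairs, proves combinatorial lemmas about interlacing and parallel chord families (\Cref{lem:interlacing two far apart lengths}, \Cref{lem:interlacing_parallel two far apart lengths}, \Cref{lem:close-length chords}), and runs a multi-scale induction (\Cref{lem:main-part}) that deliberately accepts subpolynomial losses — which is exactly why it lands at $n^{1-o(1)}$ rather than $\Omega(n)$. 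Both your route and the paper's fall short of the conjecture, but yours has visible holes already at the case split, before reaching the acknowledged hard step.
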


While the special case of this conjecture for regular graphs already seems quite challenging, it is natural to expect that the full Conjecture \ref{conj:main} is even harder. The reason for this is that often problems become more difficult when the regularity requirement is replaced by a minimum degree assumption. 
One well-known example is a conjecture of Thomassen \cite{Thomassen_girth}, that a graph with a large enough minimum degree contains a subgraph of large minimum degree and large girth. This conjecture is open even for girth $7$. However, this statement becomes easy if the given graph is regular, see e.g. \cite{MPS}. 
Such situations arise also for questions related to the one studied here: A classical result of Smith (see \cite{Tutte} and also \cite{Thomassen_300}) states that every Hamiltonian $3$-regular graph $G$ contains a second Hamilton cycle. As was shown by Entringer and Swart \cite{ES}, this is no longer true if instead of $3$-regularity we assume that $\delta(G) \geq 3$ (even if all degrees are equal to $3$ or $4$). 
Gir{\~a}o, Kittipassorn and Narayanan \cite{GKN} required an involved proof to even show that a Hamiltonian $G$ with $\delta(G) \geq 3$ contains a second cycle of length at least $n - o(n)$. In contrast, for regular $G$, this proof can be simplified considerably and gives a better bound. 

It is worth noting that if one replaces the minimum degree requirement $\delta(G) \geq 3$, with the requirement that the average degree is at least $3$, then the aforementioned lower bound of $\Omega(\sqrt{n})$ is tight. More generally, Milans, Pfender, Rautenbach, Regen and West \cite{MPRRW} have shown that a graph $G$ with $n$ vertices and $m$ edges satisfies $|\mathcal{C}(G)| \geq (1-o(1))\sqrt{m - n}$, and this is tight.

In this paper, we prove the following theorem, which resolves Conjecture \ref{conj:main} asymptotically:
\begin{thm}\label{thm:main}
An $n$-vertex Hamiltonian graph $G$ with $\delta(G) \geq 3$ contains cycles of $n^{1-o(1)}$ different \nolinebreak lengths.
\end{thm}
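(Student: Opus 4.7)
I begin by fixing a Hamilton cycle $C = v_0 v_1 \cdots v_{n-1}$ of $G$ and calling the remaining edges of $G$ \emph{chords}. Since $\delta(G) \geq 3$, each vertex is incident to at least one chord, so there are at least $n/2$ chords. For a chord $e = v_i v_j$, write $\ell(e) = \min(|i-j|, n-|i-j|)$ for its shorter arc-length on $C$; then $e$ already yields the two cycle lengths $\ell(e)+1$ and $n-\ell(e)+1$. Setting $L = \{\ell(e) : e \text{ a chord}\}$, if $|L| \geq n^{1-o(1)}$ we are done immediately, since the map $\ell \mapsto \ell+1$ embeds $L$ into $\mathcal{C}(G)$. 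Hence we may assume throughout that the chord-lengths are highly concentrated, with $|L| \leq n^{1-o(1)}$.

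The idea in this remaining case is to stop thinking about chords in isolation and instead combine \emph{families} of chords into cycles whose length can be written as a telescoping sum of arc-pieces. Two geometrically canonical configurations are \emph{nested} chains $(a_1,b_1) \supset (a_2,b_2) \supset \cdots$ and \emph{crossing} ladders, both of which allow one to form, for each prefix of length $t$, a cycle of length expressible as $\pm\ell(e_1) \pm \ell(e_2) \pm \cdots \pm \ell(e_t) + O(1)$ with an explicit sign pattern. The plan is therefore two-pronged: (i) show that in a chord-length-concentrated graph we can extract a very long nested or laddered subfamily, and (ii) show that such a subfamily realises $n^{1-o(1)}$ distinct cycle lengths via its telescoping sums.

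For step (i), I would first dyadically group chords by length so that all chords in one group have $\ell(e) \in [2^k, 2^{k+1}]$; since there are only $O(\log n)$ groups, the heaviest group contains a linear number of chords. Inside one group, Dilworth/Erd\H{o}s--Szekeres applied to the circular order of endpoints already yields a nested or pairwise-crossing subfamily of size $\sqrt{n}$, and iterating the scale-pigeonhole with the small-$|L|$ hypothesis should push this up to $n^{1-o(1)}$. For step (ii), I would extract the family so that the successive chord-length differences are strictly positive (so the telescoping partial sums are strictly monotone), producing an $n^{1-o(1)}$-term strictly increasing sequence of realised cycle lengths.

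The main obstacle I expect is the interaction between length concentration and geometric concentration: nothing in the setup prevents the bulk of same-length chords from sharing an endpoint, being translates of one another along $C$, or otherwise being arithmetically rigid, in which case long nested/ladder subfamilies do not exist and the telescoping sums collapse to few values. Defeating these degeneracies is exactly where the $\delta(G) \geq 3$ assumption has to do real work: at any vertex where same-length chords pile up, the minimum-degree condition forces additional chords of a different length, which one can use to perturb the telescoping sums and break any arithmetic coincidence. Handling this case analysis cleanly — and in particular showing that one can \emph{always} convert structural degeneracy into extra cycle lengths rather than losing them to cancellation — is where I expect the bulk of the technical work, and likely requires a careful reselection of the Hamilton cycle itself rather than taking an arbitrary one.
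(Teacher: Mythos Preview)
Your proposal correctly identifies the opening moves (fix a Hamilton cycle, look at chord lengths, use Erd\H{o}s--Szekeres on chord families), but it stalls at exactly the point where the real work begins. You write that Erd\H{o}s--Szekeres ``already yields a nested or pairwise-crossing subfamily of size $\sqrt{n}$, and iterating the scale-pigeonhole with the small-$|L|$ hypothesis should push this up to $n^{1-o(1)}$.'' This sentence is the entire difficulty of the theorem. The $\sqrt{n}$ bound is precisely what several earlier papers obtained, and no amount of re-pigeonholing on a single chain gets you past it: a chain of length $n^{1-o(1)}$ simply need not exist, and even when it does, the telescoping sums $\pm\ell(e_1)\pm\cdots\pm\ell(e_t)$ can collapse to $O(\sqrt{n})$ values (think of all chord lengths equal). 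Your step~(ii) presupposes you can ``extract the family so that the successive chord-length differences are strictly positive,'' but under your own small-$|L|$ hypothesis most chords share a length, so enforcing strict monotonicity of differences destroys the family you just built.

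What the paper does instead is \emph{not} to build one long chain but to run a multi-scale iteration. It cuts the cycle into roughly $k=\sqrt{\log n/\log\log n}$ parallel section-pairs, and in the $i$th pair finds about $n^{\varepsilon}$ path lengths whose pairwise gaps are $\Theta(n^{(i-1)\varepsilon})$, where $\varepsilon\approx 1/k$. Combining these via sumsets (Lemma~\ref{lem:spread-close}) multiplies the counts: after $k$ stages one has $(n^{\varepsilon})^{k}=n^{1-o(1)}$ lengths. The heart of the argument is Lemma~\ref{lem:close-length chords}, which shows that a family of chords whose lengths lie in a short interval $J$ still yields $\Omega(|E|/|J|)$ path lengths with consecutive gaps $\Theta(|J|)$ --- this is what converts ``length-concentrated'' chords into usable arithmetic at the right scale, and it has no analogue in your plan. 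The degeneracies you flag (arithmetic rigidity, shared endpoints) are handled not by perturbing with extra chords at a vertex but by this controlled-gap machinery together with a splitting lemma (Lemma~\ref{lem:splitting_process}) that in each piece produces either a high-degree vertex (giving many lengths directly) or a chord interlacing a $1/\log m$ fraction of the others.
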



Our proof of Theorem \ref{thm:main} is constructive: it gives a polynomial-time algorithm for finding cycles of $n^{1-o(1)}$ different lengths in a Hamiltonian graph of minimum degree $3$, provided a Hamilton cycle is specified. 

\section{A sketch and main ideas}\label{subsec:sketch}

The most general overarching idea that we employ is to split the Hamilton cycle into pieces (usually paths or pairs of paths) and then find paths with lengths on a different ``scale'' in different parts. 
To illustrate what we mean, let us consider the following situation. Suppose that we managed to split our Hamilton cycle into two paths $P_1,P_2$, such that there are still many chords inside the vertex-set of each $P_i$ (or, more precisely, that inside each $P_i$ there is a linear number of vertices touching a chord whose other endpoint is also on $P_i$). Suppose that we found $k = \Omega(\sqrt{n})$ paths $Q_1,\dots,Q_k$ between the endpoints of $P_1$ (which only use the vertices of $P_1$), such that $|Q_1|,\dots,|Q_k|$ are all different and all belong to an interval of width $\sqrt{n}$. Suppose further that we found $\ell = \Omega(\sqrt{n})$ paths $R_1,\dots,R_{\ell}$ between the endpoints of $P_2$ (which only use the vertices of $P_2$), such that the lengths of any two of these paths are at least $\sqrt{n}$ apart, namely, $||R_i| - |R_j|| > \sqrt{n}$ for all $i \neq j$. In this situation, we can combine any one of the $Q_i$'s with any one the $R_j$'s, joining them into a cycle of length $|Q_i| + |R_j|$.
The crucial point is that the $k\ell$ numbers $|Q_i| + |R_j|$ are all different. In other words, we use the ``condensed'' lengths $Q_1,\dots,Q_k$ to ``fill in the gaps'' between the ``spread-out'' lengths $R_1,\dots,R_{\ell}$ (see \Cref{lem:sum-sets} for the details on this). In total, this would give us $k\ell = \Omega(n)$ different cycle lengths. Hence, achieving both above goals would establish Conjecture \ref{conj:main}.




We believe that both above statements should be true, namely, that one can find both $\Omega(\sqrt{n})$ distinct path lengths all contained in an interval of width $\sqrt{n}$ and $\Omega(\sqrt{n})$ path lengths which are $\sqrt{n}$ apart. Observe that both of these statements are essentially implied by \Cref{conj:main}, and that our main result shows that both hold asymptotically 
(i.e., with $\sqrt{n}$ replaced by $n^{1/2-o(1)}$).
On the other hand, these statements shift the difficulty from finding many lengths (note that there have been a number of proofs that there are at least $\sqrt{n}$ different lengths over the years) to controlling what kind of lengths \nolinebreak we \nolinebreak find. 

Our actual strategy for tackling Conjecture \ref{conj:main} is a bit more involved. Instead of splitting our cycle into just two parts, we split it into a larger number $k$ of parts (with $k$ to be chosen as roughly $\sqrt{\log n}$). Here each part will be a pair of cycle sections (subpaths of the cycle) with at least $n^{1-o(1)}$ chords between them, with different section-pairs situated ``on top of'' each other (see \Cref{fig:0.2}).
Now, with the goal of finding $n^{1-\eps}$ different lengths (where $\varepsilon$ is an appropriately chosen vanishing function of $n$), we shall proceed as follows. Inside the first of the $k$ parts, we shall find $\Omega(n^{\eps})$ path lengths all belonging to an interval of width $n^{\eps}$.
Then, inside the second part, we shall find about $\Omega(n^{\eps})$ lengths $\ell_1 < \ldots < \ell_t$ such that any two consecutive lengths are $\Theta(n^{\eps})$ apart, namely $\ell_{i+1} - \ell_i = \Theta(n^{\eps})$ for all $i$. Now, by combining the paths we found in these two parts, we will get $\Omega(n^{2\eps})$ different path lengths, all belonging to an interval of width $O(n^{2\eps})$, and only using vertices from the first two parts of the partition. Continuing in this manner, we will find inside the third part $\Omega(n^{\eps})$ lengths which are $\Theta(n^{2\eps})$ apart, inside the fourth part $\Omega(n^{\eps})$ lengths which are $\Theta(n^{3\eps})$ apart, and so on. This will always allow us to combine the new lengths we find with the lengths found so far to get $\Omega(n^{i\eps})$ different path lengths, all belonging to an interval of width $O(n^{i\eps})$, only using vertices from the first $i$ parts.
In each iteration we will actually lose a polylogarithmic factor in the number of paths we find, which will result in the optimal number of iterations being $\sqrt{\frac{\log n}{\log \log n}}$ (this corresponds to having $\varepsilon = \sqrt{\frac{\log \log n}{\log n}}$). After this number of iterations, we will find $n^{1-o(1)}$ different lengths. This iterative process is handled in \Cref{lem:main-part}.

Let us now focus on a single iteration and sketch the main ideas involved. For simplicity, suppose that this is the third iteration, namely, that our goal is to find (inside the third of the $k$ parts of the partition) $\Omega(n^{\eps})$ path lengths $\ell_1 < \ldots <\ell_t$ with $\ell_{i+1} - \ell_i = \Theta(n^{2\eps})$ for all $i$. Up to this step, we have already found $\Theta(n^{2\eps})$ path lengths in an interval of width $O(n^{2\eps})$ inside the first two parts. Now, we consider a maximum collection $e_1,\dots,e_m$ of chords inside the third part, such that for all $i \neq j$, the lengths of $e_i$ and $e_j$ differ by at least $n^{2\varepsilon}$. Each chord $e_i$ gives rise to a path inside the third part (namely, the path that consists of the chord and pieces of the cycle), and the lengths of any two of these $m$ paths differ by at least $n^{2\varepsilon}$.   
Now, observe that if $m \geq n^{1-3\varepsilon}$, then by combining these paths with the $\Theta(n^{2\eps})$ path lengths we found in the first two parts of the partition, we obtain altogether $m \cdot \Omega(n^{2\eps}) = \Omega(n^{1-\varepsilon})$ different cycle lengths, and thus achieve our goal already at this stage. So we may assume that $m \leq n^{1-3\varepsilon}$. 
Since $e_1,\dots,e_m$ is a maximal family, the length of any other chord must be at distance at most $n^{2\varepsilon}$ to that of one of the $e_i$'s. 
By averaging (and as each part of the partition contains $n^{1-o(1)}$ chords), we see that there is a family $E$ of at least $n^{1-o(1)}/m \geq n^{3\varepsilon - o(1)}$ different chords, whose lengths all belong to an interval of width $n^{2\varepsilon}$. The reason such a family $E$ is useful is as follows: Suppose we partition the two cycle sections of the third part into subpaths $X_1,X_2,\ldots$ of length $n^{2\varepsilon}$. Then, for any two such subpaths $X_i,X_j$ on the left side, which are not consecutive (and hence are at distance larger than $n^{2\varepsilon}$ on the path), any chord touching $X_1$ must interlace (i.e. cross) any chord touching $X_2$. For if not, then the difference of the lengths of these two chords is larger than $n^{2\varepsilon}$, contradicting the fact that both lengths belong to an interval of width $n^{2\varepsilon}$. So we see that $E$ decomposes into pairwise-interlacing pieces, see \Cref{fig:0.3} for an illustration. This structure, together with some additional arguments, then allows us to find the 
$\Omega(n^{\eps})$ desired path lengths $\ell_1 < \ldots <\ell_t$. We remark that while it is not hard to find such lengths with $\ell_{i+1}-\ell_i= \Omega(n^{2\eps})$, which already allows us to find $\Omega(n^{3\eps})$ lengths, it is essential for the next iteration that these lengths are not too far apart, in other words ensuring in addition that $\ell_{i+1}-\ell_i \le O(n^{2\eps})$ is crucial in order to be able to continue our argument.

Executing the above strategy presents significant technical difficulties, in part due to the need to join the various pieces in suitable ways. This sometimes makes various parts of the argument a bit cumbersome, and we try to alleviate this situation with a number of figures illustrating the argument. 

Finally, we note that even splitting the cycle into pieces (i.e. subpaths or pairs of sections), a preliminary step for the above strategy, is non-trivial and requires some work. To illustrate this, note that if our graph only consisted of ``diameter'' chords (joining vertices which are as far apart on the cycle as possible), then it is not possible to split the cycle into two paths with each of them still containing many chords. The way we go around this issue is to show that one can ``reroute'' the original Hamilton cycle using two chords (and large parts of the Hamilton cycle), thus obtaining a new cycle which admits the desired split.

\section{Setting up the stage}

Let us introduce some terminology that we will use in the proof. 
For a path $P$, denote by $|P|$ the number of edges in $P$. Given a path or a cycle, we will call its subpaths \textit{sections}.
We will often consider chords between two sections of a cycle, and hence it is convenient to have the following setup. A {\em section-pair} is a pair of vertex-disjoint paths $X,Y$. 
We will always denote the endpoints of $X$ by 
$x^{\text{t}}$ and $x^{\text{b}}$, where $x^{\text{t}}$ is called the top and $x^{\text{b}}$ the bottom. Similarly, the endpoints of $Y$ are
denoted $y^{\text{t}},y^{\text{b}}$. For distinct $x_1,x_2 \in X$, we say that $x_1$ is {\em above} $x_2$ if $x_1$ is closer to $x^{\text{t}}$ along $X$ than $x_2$; otherwise we say that $x_1$ is {\em below} $x_2$. For sets $X_1,X_2 \subseteq X$, we say that $X_1$ is {\em above} (resp. {\em below}) $X_2$ if every $x_1 \in X_1$ is above (resp. below) every $x_2 \in X_2$. When considering a sequence $X_1,\dots,X_t$ of disjoint subsets of $X$, unless otherwise specified, we assume they are labeled in such a way that $X_i$ is below $X_j$ for all $1 \leq i < j \leq t$.  
We use $x_i^{\text{t}}$ (resp. $x_i^{\text{b}}$) to denote the top (resp. bottom) vertex of $X_i$.

For $x_1,x_2 \in X$, denote by $X[x_1,x_2]$ the subpath of $X$ between $x_1$ and $x_2$, and by $d_X(x_1,x_2) = |X[x_1,x_2]|$ the length of this subpath, namely, the distance between $x_1$ and $x_2$ along $X$.
Define $Y[y_1,y_2]$ and $d_Y(y_1,y_2)$ analogously for $y_1,y_2 \in Y$.  A \textit{subsection pair} of a section pair $X,Y$ is a section pair consisting of a subpath of $X$ and a subpath of $Y$.

A chord is an edge with one endpoint in $X$ and one in $Y$. Let $(x_1,y_1),(x_2,y_2)$ be two chords which have no common vertices. We say that $(x_1,y_1),(x_2,y_2)$
are {\em parallel} if $x_i$ is above $x_{3-i}$ and $y_i$ is above $y_{3-i}$ for some $i = 1,2$; otherwise we say that $(x_1,y_1),(x_2,y_2)$ are {\em interlacing}. In other words, $(x_1,y_1),(x_2,y_2)$ are interlacing if $x_1$ is above $x_2$ but $y_1$ is below $y_2$, or vice versa.

The following statement is equivalent to (the symmetric case of) the Erd\"os-Szekeres lemma. 
\begin{lem}\label{lem:ES}
Let $k \geq 1$, and let $X,Y$ be a section-pair with at least $(k-1)^2 + 1$ chords, no two of which share vertices. Then there is a set $E$ of $k$ chords such that either every two chords in $E$ are parallel or every two chords in $E$ are interlacing.  
\end{lem}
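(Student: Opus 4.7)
The plan is to reduce the statement directly to the classical Erdős--Szekeres theorem on monotone subsequences, using the observation that the chord structure is naturally encoded by a permutation of length $(k-1)^2 + 1$.

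First I would label the chords $(x_1,y_1), \ldots, (x_N, y_N)$, where $N \geq (k-1)^2 + 1$. Since no two chords share a vertex, the $x$-endpoints are pairwise distinct, and the $y$-endpoints are pairwise distinct. I would then relabel the chords so that $x_1, x_2, \ldots, x_N$ appear on $X$ in order from bottom to top; under this relabeling, the induced ordering of $y_1, y_2, \ldots, y_N$ along $Y$ (from bottom to top) defines a permutation $\sigma$ on $[N]$.

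Next, I would apply the Erdős--Szekeres theorem to the sequence $(y_1, \ldots, y_N)$ (ordered by position along $Y$): since $N \geq (k-1)^2 + 1$, the sequence contains either an increasing subsequence of length $k$ or a decreasing subsequence of length $k$ with respect to the $Y$-ordering. Let $E$ be the set of $k$ chords corresponding to this subsequence.

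Finally, I would verify the two cases. If the subsequence is increasing, then for any two chords $(x_i, y_i), (x_j, y_j)$ in $E$ with $i < j$, both $x_j$ is above $x_i$ and $y_j$ is above $y_i$, so the chords are parallel by definition. If the subsequence is decreasing, then $x_j$ is above $x_i$ but $y_j$ is below $y_i$, so the chords are interlacing. Either way, every pair of chords in $E$ is of the same type, which gives the desired conclusion.

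The argument is essentially a direct translation, so there is no real obstacle; the only thing to be careful about is checking the definitions of parallel and interlacing line up exactly with monotone increasing and decreasing subsequences. Because the section-pair is in a ``top/bottom'' orientation (rather than, say, a cyclic one), this correspondence is clean: monotone in both coordinates $\Leftrightarrow$ parallel, and monotone in one but anti-monotone in the other $\Leftrightarrow$ interlacing.
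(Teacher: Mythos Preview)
Your proposal is correct and is exactly the reduction the paper has in mind: the paper does not give a proof, merely noting that the statement ``is equivalent to (the symmetric case of) the Erd\H{o}s--Szekeres lemma,'' and your argument spells out precisely this equivalence.
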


Given a collection of disjoint subsection pairs $X_1,Y_1;\ldots; X_t,Y_t$ (where as usual we will assume that $X_i$ is below $X_j$ for all $i<j$), we say that the collection is parallel if also $Y_i$ is below $Y_j$ for all $i<j$, and we say that it is interlacing if $Y_i$ is above $Y_j$ for all $i<j$. See Figures 2 and 3.

\begin{figure}
\RawFloats
\begin{minipage}[t]{0.32\textwidth}
\centering
\captionsetup{width=\textwidth}
\begin{tikzpicture}[scale=0.75]
\defPt{0}{0}{x_b}
\defPt{0}{4}{x_t}
\defPt{3}{0}{y_b}
\defPt{3}{4}{y_t}
\defPt{-0.5}{2.75}{x}
\defPt{3.5}{1.25}{y}

\draw[line width= 1.25pt] (x_t) to[out=-120,in=120]  (x_b);
\draw[line width= 1.25pt] (y_t) to[out=-60,in=60]  (y_b);

\draw[line width= 1.75pt,red] (x_t) to[out=-120,in=75]  (x);
\draw[line width= 1.75pt,red] (y_t) to[out=-60,in=80]  (y);
\draw[red,line width= 1.75 pt] (x) -- (y);

\draw[line width= 1pt, dotted] (x_t) to[out=45,in=135]  (y_t);
\draw[line width= 1pt, dotted] (x_b) to[out=-45,in=-135]  (y_b);

\node[] at ($(x_t)+(-0.4,0.4)$) {\small $x^{\text{t}}$};
\node[] at ($(x_b)+(-0.4,-0.4)$) {\small $x^{\text{b}}$};
\node[] at ($(y_t)+(0.4,0.4)$) {\small $y^{\text{t}}$};
\node[] at ($(y_b)+(0.4,-0.4)$) {\small $y^{\text{b}}$};

\node[] at ($(x)+(-0.4,0)$) {\small $x$};
\node[] at ($(y)+(0.4,0)$) {\small $y$};

\draw[] (x_b) \smvx;
\draw[] (x_t) \smvx;
\draw[] (y_b) \smvx;
\draw[] (y_t) \smvx;
\draw[] (x) \smvx;
\draw[] (y) \smvx;

\end{tikzpicture}
\caption{A section pair with a chord $(x,y)$ and its corresponding trivial path marked in red.}
\label{fig:0.1}
\end{minipage}\hfill
\begin{minipage}[t]{0.32\textwidth}
\centering
\captionsetup{width=\textwidth}
\begin{tikzpicture}[scale=0.8]

\defPt{0}{0}{x_b}
\defPt{0}{4.5}{x_t}
\defPtm{($0.075*(x_b)+0.925*(x_t)$)}{x6}
\defPtm{($0.225*(x_b)+0.775*(x_t)$)}{x5}

\defPtm{($0.425*(x_b)+0.575*(x_t)$)}{x4}
\defPtm{($0.575*(x_b)+0.425*(x_t)$)}{x3}

\defPtm{($0.775*(x_b)+0.225*(x_t)$)}{x2}
\defPtm{($0.925*(x_b)+0.075*(x_t)$)}{x1}

\defPt{3.5}{0}{y_b}
\defPt{3.5}{4.5}{y_t}

\defPtm{($0.075*(y_b)+0.925*(y_t)$)}{y6}
\defPtm{($0.225*(y_b)+0.775*(y_t)$)}{y5}

\defPtm{($0.425*(y_b)+0.575*(y_t)$)}{y4}
\defPtm{($0.575*(y_b)+0.425*(y_t)$)}{y3}

\defPtm{($0.775*(y_b)+0.225*(y_t)$)}{y2}
\defPtm{($0.925*(y_b)+0.075*(y_t)$)}{y1}

\draw[color=white, fill=black!50!,fill opacity=0.3] (x1) -- (x2) -- (y2) -- (y1) -- cycle;
\draw[color=white, fill=black!50!,fill opacity=0.3] (x3) -- (x4) -- (y4) -- (y3) -- cycle;
\draw[color=white, fill=black!50!,fill opacity=0.3] (x5) -- (x6) -- (y6) -- (y5) -- cycle;

\draw[] (x_b) \smvx;
\draw[] (x_t) \smvx;
\foreach \i in {1,...,6}
{
\draw[] (x\i) \smvx;
\draw[] (y\i) \smvx;
}
\draw[] (y_b) \smvx;
\draw[] (y_t) \smvx;

\draw[line width= 0.75 pt] (x_b) -- (x_t);
\draw[line width= 0.75 pt] (y_b) -- (y_t);

\draw[line width= 2 pt] (x1) -- (x2);
\draw[line width= 2 pt] (x3) -- (x4);
\draw[line width= 2 pt] (x5) -- (x6);

\draw[line width= 2 pt] (y1) -- (y2);
\draw[line width= 2 pt] (y3) -- (y4);
\draw[line width= 2 pt] (y5) -- (y6);

\foreach \i in {1,...,6}
{
\draw[line width= 1 pt, dashed] (x\i) -- (y\i);
}

\draw[line width= 1 pt] ($0.25*(x1)+0.75*(x2)$) -- ($0.5*(y1)+0.5*(y2)$);
\draw[line width= 1 pt] ($0.5*(x1)+0.5*(x2)$) -- ($0.75*(y1)+0.25*(y2)$);
\draw[line width= 1 pt] ($0.75*(x1)+0.25*(x2)$) -- ($0.25*(y1)+0.75*(y2)$);

\draw[line width= 1 pt] ($0.25*(x3)+0.75*(x4)$) -- ($0.5*(y3)+0.5*(y4)$);
\draw[line width= 1 pt] ($0.5*(x3)+0.5*(x4)$) -- ($0.75*(y3)+0.25*(y4)$);
\draw[line width= 1 pt] ($0.75*(x3)+0.25*(x4)$) -- ($0.25*(y3)+0.75*(y4)$);

\draw[line width= 1 pt] ($0.25*(x5)+0.75*(x6)$) -- ($0.5*(y5)+0.5*(y6)$);
\draw[line width= 1 pt] ($0.5*(x5)+0.5*(x6)$) -- ($0.75*(y5)+0.25*(y6)$);
\draw[line width= 1 pt] ($0.75*(x5)+0.25*(x6)$) -- ($0.25*(y5)+0.75*(y6)$);

\node[] at ($0.5*(x1)+0.5*(x2)+(-0.5,0)$) {\small $X_1$};
\node[] at ($0.5*(x3)+0.5*(x4)+(-0.5,0)$) {\small $X_2$};
\node[] at ($0.5*(x5)+0.5*(x6)+(-0.5,0)$) {\small $X_3$};

\node[] at ($0.5*(y1)+0.5*(y2)+(0.5,0)$) {\small $Y_1$};
\node[] at ($0.5*(y3)+0.5*(y4)+(0.5,0)$) {\small $Y_2$};
\node[] at ($0.5*(y5)+0.5*(y6)+(0.5,0)$) {\small $Y_3$};

\node[] at ($(x_t)+(-0.5,0.1)$) {\small $x^{\text{t}}$};
\node[] at ($(x_b)+(-0.5,0.1)$) {\small $x^{\text{b}}$};
\node[] at ($(y_t)+(0.5,0.1)$) {\small $y^{\text{t}}$};
\node[] at ($(y_b)+(0.5,0.1)$) {\small $y^{\text{b}}$};


\end{tikzpicture}
\caption{A parallel collection of subsection pairs}
\label{fig:0.2}
\end{minipage}\hfill
\begin{minipage}[t]{0.32\textwidth}
\centering
\captionsetup{width=\textwidth}
\begin{tikzpicture}[scale=0.8]

\defPt{0}{0}{x_b}
\defPt{0}{4.5}{x_t}
\defPtm{($0.075*(x_b)+0.925*(x_t)$)}{x6}
\defPtm{($0.225*(x_b)+0.775*(x_t)$)}{x5}

\defPtm{($0.425*(x_b)+0.575*(x_t)$)}{x4}
\defPtm{($0.575*(x_b)+0.425*(x_t)$)}{x3}

\defPtm{($0.775*(x_b)+0.225*(x_t)$)}{x2}
\defPtm{($0.925*(x_b)+0.075*(x_t)$)}{x1}

\defPt{3.5}{0}{y_b}
\defPt{3.5}{4.5}{y_t}

\defPtm{($0.075*(y_b)+0.925*(y_t)$)}{y2}
\defPtm{($0.225*(y_b)+0.775*(y_t)$)}{y1}

\defPtm{($0.425*(y_b)+0.575*(y_t)$)}{y4}
\defPtm{($0.575*(y_b)+0.425*(y_t)$)}{y3}

\defPtm{($0.775*(y_b)+0.225*(y_t)$)}{y6}
\defPtm{($0.925*(y_b)+0.075*(y_t)$)}{y5}

\draw[color=white, fill=black!50!,fill opacity=0.3] (x1) -- (x2) -- (y2) -- (y1) -- cycle;
\draw[color=white, fill=black!50!,fill opacity=0.3] (x3) -- (x4) -- (y4) -- (y3) -- cycle;
\draw[color=white, fill=black!50!,fill opacity=0.3] (x5) -- (x6) -- (y6) -- (y5) -- cycle;

\draw[] (x_b) \smvx;
\draw[] (x_t) \smvx;
\foreach \i in {1,...,6}
{
\draw[] (x\i) \smvx;
\draw[] (y\i) \smvx;
}
\draw[] (y_b) \smvx;
\draw[] (y_t) \smvx;

\draw[line width= 0.75 pt] (x_b) -- (x_t);
\draw[line width= 0.75 pt] (y_b) -- (y_t);

\draw[line width= 2 pt] (x1) -- (x2);
\draw[line width= 2 pt] (x3) -- (x4);
\draw[line width= 2 pt] (x5) -- (x6);

\draw[line width= 2 pt] (y1) -- (y2);
\draw[line width= 2 pt] (y3) -- (y4);
\draw[line width= 2 pt] (y5) -- (y6);

\foreach \i in {1,...,6}
{
\draw[line width= 1 pt, dashed] (x\i) -- (y\i);
}

\draw[line width= 1 pt] ($0.25*(x1)+0.75*(x2)$) -- ($0.5*(y1)+0.5*(y2)$);
\draw[line width= 1 pt] ($0.5*(x1)+0.5*(x2)$) -- ($0.75*(y1)+0.25*(y2)$);
\draw[line width= 1 pt] ($0.75*(x1)+0.25*(x2)$) -- ($0.25*(y1)+0.75*(y2)$);

\draw[line width= 1 pt] ($0.25*(x3)+0.75*(x4)$) -- ($0.5*(y3)+0.5*(y4)$);
\draw[line width= 1 pt] ($0.5*(x3)+0.5*(x4)$) -- ($0.75*(y3)+0.25*(y4)$);
\draw[line width= 1 pt] ($0.75*(x3)+0.25*(x4)$) -- ($0.25*(y3)+0.75*(y4)$);

\draw[line width= 1 pt] ($0.25*(x5)+0.75*(x6)$) -- ($0.5*(y5)+0.5*(y6)$);
\draw[line width= 1 pt] ($0.5*(x5)+0.5*(x6)$) -- ($0.75*(y5)+0.25*(y6)$);
\draw[line width= 1 pt] ($0.75*(x5)+0.25*(x6)$) -- ($0.25*(y5)+0.75*(y6)$);

\node[] at ($0.5*(x1)+0.5*(x2)+(-0.5,0)$) {\small $X_1$};
\node[] at ($0.5*(x3)+0.5*(x4)+(-0.5,0)$) {\small $X_2$};
\node[] at ($0.5*(x5)+0.5*(x6)+(-0.5,0)$) {\small $X_3$};

\node[] at ($0.5*(y1)+0.5*(y2)+(0.5,0)$) {\small $Y_1$};
\node[] at ($0.5*(y3)+0.5*(y4)+(0.5,0)$) {\small $Y_2$};
\node[] at ($0.5*(y5)+0.5*(y6)+(0.5,0)$) {\small $Y_3$};

\node[] at ($(x_t)+(-0.5,0.1)$) {\small $x^{\text{t}}$};
\node[] at ($(x_b)+(-0.5,0.1)$) {\small $x^{\text{b}}$};
\node[] at ($(y_t)+(0.5,0.1)$) {\small $y^{\text{t}}$};
\node[] at ($(y_b)+(0.5,0.1)$) {\small $y^{\text{b}}$};



\end{tikzpicture}
\caption{An interlacing collection of subsection pairs}
\label{fig:0.3}
\end{minipage}
\end{figure}
\vspace{-0.2cm}

The {\em length} of a chord $(x,y)$ is defined as $d_X(x^{\text{t}},x) + d_Y(y^{\text{t}},y)$, which is one less than the length of the path $X[x^{\text{t}},x],(x,y),Y[y,y^{\text{t}}]$. This path will be called the {\em trivial path} corresponding to the chord $(x,y)$.

A {\em top-bottom path-pair} (for the section-pair $X,Y$) is a pair of vertex-disjoint paths which are contained in $X \cup Y$, start at $x^{\text{t}},y^{\text{t}}$, and end at $x^{\text{b}},y^{\text{b}}$. So, for example, $X,Y$ is a (trivial) top-bottom path-pair. The length of a top-bottom path-pair $\mathcal{P} = (P_1,P_2)$ is defined as $|P_1| + |P_2|$ and denoted by $|\mathcal{P}|$.  

Given $x \in X, y \in Y$, a path from $x$ to $y$ is called a \textit{below-path} if it only uses vertices of $X$ and $Y$ which are below $x$ and $y$, respectively.

In several points in the proof, we will split our cycle into disjoint parts (paths or section pairs) and concatenate paths (or path-pairs) which we find in different parts. Evidently, the length of the concatenated path is the sum of lengths of the individual paths. Hence, the set of path lengths we obtain in this way is the {\em sum-set} of the sets of path lengths we find in each of the different parts.
Formally, given a sequence of sets $L_1,\ldots, L_t$, we define the sum-set $L_1+\ldots+L_t:=\{\ell_1+\ldots+\ell_t \mid \ell_i \in L_i\}$. Given a set $L$ of integers, we say that two elements of $L$ are {\em consecutive} (with respect to $L$) if there is no element of $L$ between them.
By {\em interval} we always mean an interval of natural numbers. 
We will often use the trivial fact that if $x,y$ belong to an interval $I$ then $|x - y| \leq |I|-1$.
For numbers $x,y$, we say that $x,y$ are {\em at least $b$ apart} (or just {\em $b$ apart}) if $|x - y| \geq b$, and {\em at most $b$ apart} if $|x - y| \leq b$.  
The following auxiliary lemmas about sum-sets will come in handy later. 
\begin{lem}\label{lem:sum-sets}
Let $L_1,\ldots, L_t \subseteq \mathbb{N}$ with the property that for all $1\le i\le t$, every two consecutive elements of $L_i$ are at least $a$ and at most $b$ apart. Then there is a subset of $L_1+\ldots + L_t$ of size $1+\sum_{i=1}^t (|L_i|-1)$ in which every two consecutive elements are at least $a$ and at most $b$ apart.  
\end{lem}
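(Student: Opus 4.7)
The plan is to construct an explicit sequence in $L_1+\ldots+L_t$ realizing the desired cardinality and gap bounds, by starting at the smallest possible sum and then incrementing one coordinate at a time.

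First, I would enumerate each $L_i$ in increasing order as $\ell_{i,1}<\ell_{i,2}<\ldots<\ell_{i,k_i}$ where $k_i=|L_i|$. By hypothesis, $a\le \ell_{i,j+1}-\ell_{i,j}\le b$ for every valid $i,j$. In particular, since $a\ge 1$, the elements of each $L_i$ are strictly increasing with positive gaps.

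Next, I would build a sequence $s_0,s_1,\ldots,s_N$ with $N=\sum_{i=1}^t(k_i-1)$ as follows. Set $s_0=\sum_{i=1}^t \ell_{i,1}$, and then perform $N$ update steps, grouped in $t$ blocks: in the $i$-th block, for $j=2,3,\ldots,k_i$, replace the $i$-th summand (currently equal to $\ell_{i,j-1}$) by $\ell_{i,j}$. Every term of the resulting sequence has the form $\ell_{1,j_1}+\ldots+\ell_{t,j_t}$ with each $j_i\in\{1,\ldots,k_i\}$, hence lies in $L_1+\ldots+L_t$. Moreover, each consecutive difference is of the form $\ell_{i,j}-\ell_{i,j-1}$ for some $i,j$, and therefore lies in $[a,b]$; note that this holds uniformly, including at the transitions between blocks (where the change is $\ell_{i+1,2}-\ell_{i+1,1}$). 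Since all these gaps are at least $a\ge 1$, the sequence $s_0<s_1<\ldots<s_N$ is strictly increasing, so its terms are pairwise distinct and listing them in increasing order coincides with the construction order.

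Thus the set $\{s_0,s_1,\ldots,s_N\}$ is a subset of $L_1+\ldots+L_t$ of size $N+1=1+\sum_{i=1}^t(|L_i|-1)$ in which every two consecutive elements are at least $a$ and at most $b$ apart, as required. There is no real obstacle here: the proof is a direct constructive bookkeeping argument, with the only point deserving attention being the verification that the gap condition persists across the transitions between blocks, which follows immediately from the hypothesis applied inside $L_{i+1}$.
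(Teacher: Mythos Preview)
Your proof is correct and is essentially the same as the paper's: the paper proves the lemma by induction on $t$, at each step appending to the current set $S=\{x_1<\cdots<x_s\}$ the elements $x_s+y_2,\ldots,x_s+y_r$ where $y_1<\cdots<y_r$ enumerate $L_t$; unrolling this induction yields exactly your explicit sequence $s_0,\ldots,s_N$. One tiny remark: you write ``since $a\ge 1$'' to deduce strict monotonicity, but the lemma does not assume this; strict monotonicity follows simply because the $\ell_{i,j}$ are distinct natural numbers, so each step increases the sum by at least $1$.
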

\begin{proof}
 We will prove the lemma by induction on $t$. If $t=1$ then the statement is trivial. Assume it holds for $t-1$ so that we can find a subset $S$ of $L_1+\ldots + L_{t-1}$ of size $1+\sum_{i=1}^{t-1} (|L_i|-1)$ with all consecutive elements being at least $a$ and at most $b$ apart. 
 Let $x_1 < \ldots < x_s$ be the elements of $S$, and let $y_1 < \ldots < y_r$ be the elements of $L_t$. Then $\{ x_1 + y_1 < x_2 + y_1 < \ldots < x_s + y_1 < x_s + y_2 < \ldots < x_s + y_r\}$ is a subset of $L_1 + \ldots + L_t$ of size $|S| + |L_t| - 1 = 1+\sum_{i=1}^t (|L_i|-1)$ having the desired property.
\end{proof}

\begin{lem}\label{lem:spread-close}
Let $L_1,L_2$ be sets of positive integers. Suppose that each $L_i$ is a subset of an interval of size $\ell_i$ and that any two elements of $L_1$ are at least $\ell_2$ apart. Then $L_1+L_2$ has $|L_1||L_2|$ elements, all belonging into an interval of size $\ell_1+\ell_2$.
\end{lem}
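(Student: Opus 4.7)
The plan is to verify two separate claims: first, that the sum-set has the maximal possible size $|L_1||L_2|$ (i.e. all the sums $a+b$ with $a\in L_1, b\in L_2$ are distinct), and second, that the sum-set is contained in an interval of size $\ell_1+\ell_2$.

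For the first claim, I would argue by contradiction. Suppose $a_1+b_1 = a_2+b_2$ with $a_1,a_2 \in L_1$, $b_1,b_2\in L_2$ and $(a_1,b_1)\neq(a_2,b_2)$. Then necessarily $a_1\neq a_2$, so by the spread hypothesis on $L_1$ we have $|a_1-a_2|\geq \ell_2$. On the other hand $a_1-a_2 = b_2-b_1$, and since both $b_1,b_2$ lie in an interval of size $\ell_2$, we get $|b_2-b_1|\leq \ell_2-1$, contradicting $|a_1-a_2|\geq \ell_2$. Hence all sums are distinct and $|L_1+L_2|=|L_1||L_2|$.

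For the second claim, let $I_i$ be the interval of size $\ell_i$ containing $L_i$, and write $I_i=[m_i,M_i]$ with $M_i-m_i = \ell_i-1$. Then $L_1+L_2 \subseteq [m_1+m_2,\,M_1+M_2]$, an interval of size $(M_1+M_2)-(m_1+m_2)+1 = \ell_1+\ell_2-1 \leq \ell_1+\ell_2$, as desired.

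There is no real obstacle here; both parts are one-line consequences of the hypotheses once phrased correctly. The only thing to be careful about is the convention that ``interval of size $\ell$'' means $\ell$ consecutive integers (so the diameter is $\ell-1$, not $\ell$), which is exactly what makes the two bounds $\ell_2$ versus $\ell_2-1$ clash in the first step and gives the strict separation needed for distinctness.
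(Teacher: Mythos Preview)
Your proof is correct and follows essentially the same approach as the paper: both argue distinctness of the sums by contradiction using the gap $|a_1-a_2|\geq \ell_2$ versus $|b_1-b_2|<\ell_2$, and both bound the sum-set inside an interval by adding the extremes. The only cosmetic difference is that the paper uses the actual minimum and maximum elements of $L_1,L_2$ rather than the endpoints of the ambient intervals $I_1,I_2$, which makes no difference.
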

\begin{proof}
Let $r_1< \ldots < r_t$ be the elements of $L_1$ and $s_1 < \ldots < s_p$ be the elements of $L_2$. We claim that all $r_i+s_j$ are distinct. To see this, assume that $r_i+s_j=r_{i'}+s_{j'}$. If $i\neq i'$ then we must have $|r_i-r_{i'}| \ge \ell_2$. However, $|s_j-s_j'| <\ell_2$, and so this is impossible. Hence $i=i'$. This further implies that $j=j'$ and completes the proof. This shows that $L_1+L_2$ indeed has $|L_1||L_2|$ elements. Furthermore, notice that all elements in $L_1+L_2$ belong into $[r_1+s_1,r_t+s_p]$ and since $r_t-r_1 < \ell_1$ and $s_p-s_1 < \ell_2$, this interval indeed has size at most $\ell_1+\ell_2$.
\end{proof}

All our logarithms are in base $2$ unless otherwise specified. We will omit floor/ceiling signs whenever these are not crucial. 

\section{Finding cycles of many different lengths}

In this section we will prove our main result, \Cref{thm:main}. We begin, in the following subsection, by proving a number of lemmas which provide us with the main tools to attack the problem. We then proceed to put everything together in the subsequent subsection.

\subsection{Main lemmas}

The following lemma states that in every section-pair with many chords, one can find either a parallel or an interlacing collection of subsection pairs, each of which still has many chords. 

\begin{lem}\label{lem:initial_split}
Let $X,Y$ be a section-pair with $m$ chords, and let $k \geq 1$. Suppose that no vertex of $X \cup Y$ is incident to more than $\frac{m}{10k^2}$ chords. 
Then there is either a parallel or an interlacing collection of subsection pairs $X_1,Y_1;\dots;X_k,Y_k$ such that $e(X_i,Y_i) \geq \Omega(\frac{m}{k^4})$ for all $1 \leq i \leq k$.
\end{lem}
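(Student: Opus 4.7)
My plan is to reduce to an application of Erdős–Szekeres (\Cref{lem:ES}) on a large monotone matching in an auxiliary bipartite graph obtained from a balanced grid partition of the chords.

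Set $L := 3k^2$. Using the degree bound, greedily partition $X$ into $L$ consecutive subpaths $A_1 < \cdots < A_L$ so that each $A_i$ contains $\Theta(m/k^2)$ chord endpoints; because each vertex carries at most $m/(10k^2) = \Theta(m/L)$ chord endpoints, the standard ``cumulative threshold'' breakpoints make every $A_i$ have chord-endpoint count in an interval of the form $[m/L - m/(10k^2),\, m/L + m/(10k^2)]$. Perform the analogous construction on $Y$ to obtain $B_1 < \cdots < B_L$, and set $n_{ij} := e(A_i, B_j)$, so every row sum $\sum_j n_{ij}$ and every column sum $\sum_i n_{ij}$ is $\Theta(m/k^2)$. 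Call a cell $(i,j)$ \emph{heavy} if $n_{ij} \geq \tau := c\,m/k^4$ for a sufficiently small absolute constant $c$. Then $L\tau = O(m/k^2)$ can be made strictly smaller than the row sum, so light cells alone cannot exhaust any row, and every row (and by symmetry every column) contains at least one heavy cell.

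View the heavy cells as the edges of a bipartite graph $H$ on $[L] \times [L]$. The key claim is that $|N_H(S)| \geq |S|/2$ for every $S \subseteq [L]$: the rows in $S$ collectively carry heavy weight at least $|S|\cdot(\text{row sum} - L\tau) = \Omega(|S|\,m/k^2)$, but this weight is confined to the columns of $N_H(S)$, where it is at most $|N_H(S)|\cdot(\text{max column sum}) = O(|N_H(S)|\,m/k^2)$. Choosing $c$ small enough makes the lower estimate exceed half the upper one, forcing $|N_H(S)| \geq |S|/2$. By the defect version of Hall's theorem, $H$ then contains a matching of size at least $L - L/2 = L/2 = \Omega(k^2)$, i.e., at least $(k-1)^2 + 1$ heavy cells with pairwise distinct rows and pairwise distinct columns.

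Finally, pick one chord from each of these heavy cells. Since the chosen cells have pairwise disjoint $A_i$'s and $B_j$'s, the chosen chords share no vertices, so \Cref{lem:ES} produces $k$ among them that are pairwise parallel or pairwise interlacing. The $k$ corresponding subsection pairs $(A_{i_1}, B_{j_1}), \ldots, (A_{i_k}, B_{j_k})$ are pairwise disjoint, form a parallel collection (if the column indices increase with the row indices) or an interlacing collection (otherwise), and each contains $\geq \tau = \Omega(m/k^4)$ chords, which is what the lemma demands. The most delicate point is the half-Hall step: one has to align the partition scale $L$, the heaviness threshold $\tau$, and the degree bound $m/(10k^2)$ so that the per-row heavy weight beats the light-cell loss with enough margin to give $|N_H(S)| \geq |S|/2$ after dividing by the (larger) maximum column sum; once this accounting is arranged, Hall/König and Erdős–Szekeres finish the argument mechanically.
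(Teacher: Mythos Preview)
Your proposal is correct and follows essentially the same argument as the paper: a balanced partition of each side into $\Theta(k^2)$ blocks using the degree bound, a heavy-cell bipartite graph with threshold $\Theta(m/k^4)$, a defect-Hall argument to extract a matching of size at least $(k-1)^2+1$, and then Erd\H{o}s--Szekeres (\Cref{lem:ES}) to obtain the $k$ parallel or interlacing subsection pairs. The only cosmetic differences are that the paper does not fix the number of blocks in advance (it lets $s,t$ fall in $[4k^2,10k^2]$ and keeps a possibly small remainder block), proves the weaker expansion $|N_G(I)|\ge |I|/4$, and applies \Cref{lem:ES} directly to the matching edges rather than to representative chords; none of this changes the structure of the proof.
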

\begin{proof}
We partition $X$ into subpaths $X'_1,\dots,X'_{s},X'_{s+1}$ such that for each $1 \leq i \leq s$, $X'_i$ touches at least $\frac{m}{10k^2}$ and at most $\frac{m}{5k^2}$ chords, and such that $X'_{s+1}$ touches less than $\frac{m}{10k^2}$ chords. We do this as follows: Take $X'_1$ to be the minimum initial segment of $X$ which touches at least $\frac{m}{10k^2}$ chords. By minimality, the segment obtained by removing the last vertex $x$ of $X'_1$ touches less than $\frac{m}{10k^2}$ chords. Also, $x$ itself touches at most $\frac{m}{10k^2}$ by the assumption of the lemma. So overall, $X'_1$ touches at most $\frac{m}{5k^2}$ chords. We now remove $X'_1$ and continue in this fashion. As long as the remaining subpath touches at least $\frac{m}{10k^2}$ chords, we can extract an initial segment of it which touches at least $\frac{m}{10k^2}$ and at most $\frac{m}{5k^2}$ chords. In the end, we are left with a subpath touching less than $\frac{m}{10k^2}$ chords, and we take this subpath to be $X'_{s+1}$. 

In the same way, partition $Y$ into subpaths $Y'_1,\dots,Y'_t,Y'_{t+1}$ with each $Y'_i$, $1 \leq i \leq t$, touching at least $\frac{m}{10k^2}$ and at most $\frac{m}{5k^2}$ chords, and with $Y'_{t+1}$ touching less than $\frac{m}{10k^2}$ chords. 
Clearly, 
$4k^2 \leq (m - \frac{m}{10k^2})/\frac{m}{5k^2} \leq s,t \leq 10k^2$. 
Define an auxiliary bipartite graph $G$ with sides $[s]$ and $[t+1]$, in which $(i,j)$ is an edge if $e(X'_i,Y'_j) \geq \frac{m}{400k^4}$. 
Let $I \subseteq [s]$. We claim that $|N_G(I)| \geq |I|/4$. To see this, note that
\begin{align*}
|I| \cdot \frac{m}{10k^2} &\leq \sum_{i \in I}{e(X'_i,Y)} \leq 
\sum_{j \in N_G(I)}{e(X,Y'_j)} + (t+1) \cdot |I| \cdot \frac{m}{400k^4} \\ &\leq 
|N_G(I)| \cdot \frac{m}{5k^2} + 2t \cdot |I| \cdot \frac{m}{400k^4} \leq 
|N_G(I)| \cdot \frac{m}{5k^2} + |I| \cdot \frac{m}{20k^2}.
\end{align*}
Rearranging gives $|N_G(I)| \geq |I|/4$. Now, by a well-known generalization of Hall's theorem, there is a matching in $G$ which saturates at least $s/4 \geq k^2$ of the elements of $[s]$. 
By Lemma \ref{lem:ES}, such a matching contains either a parallel or an interlacing family of chords of size $k$. It is easy to see that such a family gives sets $X_1,\dots,X_k,Y_1,\dots,Y_k$ as in the statement of the lemma.
\end{proof}

In a section-pair $X,Y$ with many chords, we will be able to find many paths of different lengths, for example, between $x^t$ and $y^b$. However, in order to be able to join these paths into cycles of many different lengths, we need to find top-bottom path-pairs that include these paths. This is necessary in order to combine these lengths with lengths found in other section-pairs above or below $X,Y$.  
In order to find such top-bottom path-pairs,
it is useful to have a special chord $(x,y) \in E(X,Y)$ which interlaces all (or many) other chords in $E(X,Y)$. Indeed, observe that we can use such a chord $(x,y)$ to walk from $y^t$ to $x^b$, while using the chords which interlace $(x,y)$ to obtain many path lengths from $x^t$ to $y^b$. This way, the two paths (one from $x^t$ to $y^b$ and one from $y^t$ to $x^b$) do not intersect each other, see the left part of Figure 10 (where the special chord $(x,y)$ is denoted by $(x_j,y_j)$).  

Evidently, a chord $(x,y)$ as above does not exist if all chords in $E(X,Y)$ are pairwise-parallel. Hence, it is reasonable to try and partition $X,Y$ into parallel subsections and try and find such a chord $(x,y)$ in each of them. This again may fail if in a given part there is a vertex of a very high degree. As the following lemma shows, this is essentially the only obstacle.  
For an illustration of the two outcomes of Lemma \ref{lem:splitting_process}, we refer the reader to Figures \ref{fig:split-case-1} and \ref{fig:split-case-2}. 
As the statement of Lemma \ref{lem:splitting_process} suggests, we cannot guarantee that the special chord $(x,y)$ interlaces a constant fraction of all other chords; we have to pay a log factor. 

\begin{lem}\label{lem:splitting_process}
Let $X,Y$ be a section-pair with $m\ge 2$ chords. Then there is a parallel collection of subsection pairs $X_1,Y_1;\dots;X_t,Y_t$ such that $\sum_{i=1}^t {e(X_i,Y_i)} \geq m/24$ and one of the following holds:
\begin{enumerate}
    \item For every $i$, there is a vertex in $X_i \cup Y_i$ which is incident to at least $\frac{e(X_i,Y_i)}{6\log m}$ chords in $E(X_i,Y_i)$.
    \item For every $i$, there is a chord in $E(X_i,Y_i)$ which interlaces at least $\frac{e(X_i,Y_i)}{6\log m}$ of the chords in $E(X_i,Y_i)$. 
\end{enumerate}
\end{lem}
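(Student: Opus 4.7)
The plan is a recursive divide-and-conquer procedure. Start with the section-pair $(X,Y)$, and on each recursive call on a subsection pair $(X',Y')$ with $m' := e(X',Y')$ chords, first check whether either outcome of the lemma holds locally, i.e.\ whether some vertex of $X'\cup Y'$ is incident to at least $m'/(6\log m)$ chords of $E(X',Y')$, or whether some chord of $E(X',Y')$ interlaces at least $m'/(6\log m)$ others. If so, declare $(X',Y')$ a leaf and record whether it satisfies outcome $1$ or outcome $2$. Otherwise, split $(X',Y')$ into two parallel children at a suitable chord and recurse. The recursion terminates because any non-leaf satisfies $m' > 6\log m$: otherwise $m'/(6\log m) \leq 1$ and any endpoint of any chord in $E(X',Y')$ already witnesses outcome $1$.

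For the split, sort the chords of $E(X',Y')$ by the position of their $X'$-endpoint (breaking ties arbitrarily) and let $(x_I,y_I)$ be the median chord. Let $X^d$ and $X^u$ be the subpaths of $X'$ strictly below and strictly above $x_I$, and define $Y^d,Y^u$ analogously at $y_I$; the two children of the split are $(X^d,Y^d)$ and $(X^u,Y^u)$, which form a parallel pair with the latter above the former. A chord $c \neq (x_I,y_I)$ lands in a child if and only if it is parallel to $(x_I,y_I)$ and shares no vertex with it, so the chords lost by the split are $(x_I,y_I)$ itself, the other chords incident to $x_I$ or $y_I$, and the chords interlacing $(x_I,y_I)$. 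Since we only split when neither outcome holds for $(X',Y')$, the total number lost is strictly less than $3m'/(6\log m) = m'/(2\log m)$. The key technical point to verify is that the split is balanced: by the median property, about $m'/2$ chords have $X'$-endpoint strictly below $x_I$, and the only ones among these not landing in the lower child are those interlacing $(x_I,y_I)$ or incident to $y_I$, a total strictly less than $2m'/(6\log m)$. Hence the lower child retains at least $m'/2 - m'/(3\log m) \geq m'/3$ chords (provided $\log m \geq 2$), and symmetrically for the upper child.

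Because each split maps $m'$ to two values in $[m'/3, m'/2 + 1]$, the recursion has depth $D \leq \log m$. To bound the total chord count kept in the leaves, observe that the sum of $e(X',Y')$ over all subsection pairs $(X',Y')$ in the recursion tree is at most $m(D+1)$, since each chord of $E(X,Y)$ appears in the subsection pairs along only its root-to-leaf path, of which there are at most $D + 1$. Since the loss at each internal node is at most $e(X',Y')/(2\log m)$, the total loss summed over all internal nodes is at most $m(\log m + 1)/(2\log m) \leq m/2 + m/(2\log m)$, and so the leaves contain at least $m/2 - m/(2\log m) \geq m/4$ chords in total (for $m \geq 4$; smaller $m$ are trivial since then $(X,Y)$ itself already satisfies outcome $1$). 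Each split places an upper child above a lower child in both $X$ and $Y$, so reading the leaves in their induced bottom-to-top order yields a parallel collection of subsection pairs. Partitioning the leaves into those satisfying outcome $1$ and those satisfying outcome $2$ and keeping only the larger class produces a parallel sub-collection with at least $m/8 \geq m/24$ chords in total, all of a single type, as required.
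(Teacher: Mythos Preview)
Your proof is correct and follows the same overall strategy as the paper: recursively split the section-pair in two, declare a pair a leaf once it satisfies outcome~1 or~2, and show that the depth is $O(\log m)$ so that the accumulated loss is a constant fraction of $m$. The implementation differs in two places. First, the paper splits $X$ into two halves by chord count and then chooses the $Y$-split via a carefully selected chord (the lowest chord from the upper half), whereas you split both sides at once at the median chord; your variant is cleaner and avoids the asymmetry. Second, the paper tracks the loss multiplicatively, using the identity $\sum (1/2)^{h(X_i,Y_i)}=1$ together with the bounds $e(X_i,Y_i)\in\big[(1/2-1/(2\log m))^h m,\;(1/2+1/(6\log m))^h m\big]$, while you bound the total loss additively via $\sum_{\text{nodes}} e(X',Y')\le m(D+1)$; your accounting is more elementary and arguably easier to follow. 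One cosmetic point: your claim that each child has at least $m'/3$ chords is slightly off (the correct lower bound is $m'/2 - m'/(2\log m)$, since the ``about $m'/2$'' already hides a $m'/(6\log m)$ correction from chords sharing $x_I$), but you never actually use this lower bound---only the upper bound $\le m'/2$ matters for the depth, and the loss is bounded independently---so the argument goes through unchanged.
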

\begin{proof}
We will produce a partition $X_1,\ldots,X_{t'}$ of $X$ and a partition $Y_1,\ldots, Y_{t'}$ of $Y$ such that \linebreak $\sum_{i=1}^{t'} {e(X_i,Y_i)} \geq m/12,$ and each pair $X_i,Y_i$ satisfies the condition of one of the items 1 or 2. 
Evidently, for one of the items, the pairs satisfying it would contribute at least half of all chords. Keeping only the pairs satisfying this item, we will obtain the desired collection.

We now construct such partitions $X_1,\ldots,X_{t'}$ and $Y_1,\ldots, Y_{t'}$ through the following process. We start with the trivial partition $\{X\}$ of $X$ and $\{Y\}$ of $Y$. Note that we may assume that $m\ge 16$, or the trivial partition already satisfies the condition of item 1. Suppose that at a given step of the process, there is a pair of sets $X_i,Y_i$ which violates the conditions of both items 1 and 2.
Partition $X_i$ into two subsections $X',X''$ with $X'$ above $X''$, such that $X'$ and $X''$ each touch at least $\left( \frac{1}{2} - \frac{1}{6\log m} \right)e(X_i,Y_i)$ and at most $\left( \frac{1}{2} + \frac{1}{6\log m} \right)e(X_i,Y_i)$ of the chords between $X_i$ and $Y_i$. 
This is possible since every vertex of $X_i$ is incident to less than $\frac{e(X_i,Y_i)}{6\log m}$ of these chords (by the assumption that item 1 fails). 
Let $x$ be the lowest vertex of $X'$ which is incident to a chord, and let $y$ be the lowest vertex of $Y_i$ which is adjacent to $x$. 
Partition $Y$ into subsections $Y',Y''$, such that $Y'$ is above $Y''$ and $y$ is the lowest vertex of $Y'$. Observe that if a chord $e \in E(X',Y'') \cup E(X'',Y')$ does not contain $y$, then $e$ interlaces $(x,y)$. So the number of such chords is less than $\frac{e(X_i,Y_i)}{6\log m}$.
Also, $e(X_i,y) < \frac{e(X_i,Y_i)}{6\log m}$. (Here we used the assumption that $X_i,Y_i$ violates both items 1 and 2). 
We conclude that
$e(X',Y'') + e(X'',Y') < \frac{e(X_i,Y_i)}{3\log m}$.
It follows that $e(X',Y') = e(X',Y_i) - e(X',Y'') \geq \left( \frac{1}{2} - \frac{1}{2\log m} \right)e(X_i,Y_i)$, and similarly $e(X'',Y'') \geq \left( \frac{1}{2} - \frac{1}{2\log m} \right)e(X_i,Y_i)$. Note that we also have the upper bound $e(X',Y'),e(X'',Y'') \leq \left( \frac{1}{2} + \frac{1}{6\log m} \right)e(X_i,Y_i)$. 
We now define new partitions by replacing $X_i$ with $X',X''$ and $Y_i$ with $Y',Y''$. This way, we may continue the process until every pair $X_i,Y_i$ satisfies the condition of one of the items 1 or 2. Indeed, at each step, we replace some pair $X_i,Y_i$ with pairs having less edges than $X_i,Y_i$. If the number of edges of some pair is at most $6$, then this pair trivially satisfies item 1. Hence, the process must terminate.

To each pair $(X_i,Y_i)$ appearing in the course of the process, we assign a binary string as follows. Assign the initial pair $(X,Y)$ the empty string. If $(X',Y'),(X'',Y'')$ are obtained by splitting $(X_i,Y_i)$ as above, and $(X_i,Y_i)$ is assigned the string $\sigma$, then let $(X',Y')$ be assigned the string $\sigma,0$ and $(X'',Y'')$ the string $\sigma,1$. Let $h(X_i,Y_i)$ be the length of the string assigned to a pair $X_i,Y_i$. It is easy to show, by induction, \nolinebreak that  
\begin{equation}\label{eq:splitting}
\sum\left( \frac{1}{2} \right)^{h(X_i,Y_i)} = 1
\end{equation}
is preserved throughout the process, where the sum ranges over all pairs $X_i,Y_i$ at a given moment. Moreover, by our upper and lower bounds on $e(X',Y'),e(X'',Y'')$, we get by induction that
$$
0<m \cdot \left( \frac{1}{2} - \frac{1}{2\log m} \right)^{h(X_i,Y_i)} \leq e(X_i,Y_i) \leq m \cdot \left( \frac{1}{2} + \frac{1}{6\log m} \right)^{h(X_i,Y_i)}
$$
for every pair $(X_i,Y_i)$ appearing in the process. In particular, $h(X_i,Y_i) \le 2\log m$ for each such pair $(X_i,Y_i)$. Hence $$e(X_i,Y_i) \geq
m \cdot 
\left( \frac{1}{2} - \frac{1}{2\log m} \right)^{h(X_i,Y_i)} \geq
m \cdot \left( 1 - \frac{1}{\log m} \right)^{2\log m} 
\left( \frac{1}{2} \right)^{h(X_i,Y_i)} \geq 
\frac{m}{12} \cdot 
\left( \frac{1}{2} \right)^{h(X_i,Y_i)}.$$

Consider the partitions $X_1,\dots,X_{t'}$ and $Y_1,\dots,Y_{t'}$ at the end of the process. 
By \eqref{eq:splitting} and the above, we have $\sum_{i=1}^{t'}{e(X_i,Y_i)} \ge m/12$. 
This completes the proof of the lemma.
\end{proof}

The final, main lemma of this subsection is Lemma \ref{lem:close-length chords} below. Before stating it, we will prove some auxiliary lemmas that will feature in its proof. 
We start with the following lemma.

\begin{lem}\label{lem:interlacing_parallel splitting}
Let $X,Y$ be a section-pair with $m$ chords, and suppose that each vertex of $X$ is incident to at most one chord. 
Then either there is a family of $m/2$ pairwise-interlacing chords, or we can find pairs of chords $(e_i,e'_i)$, $e_i = (x_i,y_i), e'_i = (x'_i,y'_i)$, $i = 1,\dots,t$, such that the following holds:
\begin{enumerate}
    \item For every $1 \leq i \leq t$, $e_i,e'_i$ are either parallel or share a vertex in $Y$.
    \item For every $1 \leq i < j \leq t$, both $e_i$ and $e'_i$ interlace both $e_j$ and $e'_j$.
    \item $\sum_{i=1}^t d_X(x_i,x'_i) \geq m/4$. 
\end{enumerate}
\end{lem}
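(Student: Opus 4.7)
I order the chords $c_1, \ldots, c_m$ from top to bottom along $X$, writing $c_i = (x_i, y_i)$ and letting $h_i$ denote the height of $y_i$ on $Y$; by the hypothesis on $X$-degrees, the $x_i$ are distinct. For $i<j$, the chords $c_i, c_j$ then interlace iff $h_i < h_j$, are parallel iff $h_i > h_j$, and share a $Y$-vertex iff $h_i = h_j$. This reduces the problem to an extremal question about the integer sequence $(h_1,\ldots,h_m)$, and I will split on the length of its longest strictly increasing subsequence (LIS). If the LIS has length at least $m/2$, the corresponding chords are pairwise interlacing and we are done.

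Otherwise I build the required pairs greedily. Initialize $p := 1$; repeatedly, if there is some $q > p$ with $h_q \leq h_p$, take the \emph{largest} such $q$, record the pair $(c_p, c_q)$, and set $p := q+1$; if no such $q$ exists, mark the index $p$ as \emph{skipped} and increment $p$. Label the resulting pairs $(c_{p_i}, c_{q_i})$ in order, with $p_1 < q_1 < p_2 < \cdots < q_t$. Property 1 is immediate since $h_{p_i} \geq h_{q_i}$. For Property 2 the maximality of $q$ is essential: for any $j > i$ we have $p_j, q_j > q_i$, so by the greedy rule $h_r > h_{p_i}$ for every $r > q_i$; hence $\min(h_{p_j}, h_{q_j}) > h_{p_i} \geq h_{q_i}$, which simultaneously gives the interlacing of all four cross-pair pairs and ensures that their $Y$-endpoints are distinct across different pairs.

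The main obstacle is Property 3. The key observation is that any skipped index $i$ must satisfy $h_j > h_i$ for every $j > i$ (that's why it was skipped), so the skipped indices themselves form a strictly increasing subsequence of $(h_i)$; since we are in the case LIS $< m/2$, their number $s$ satisfies $s < m/2$. Using the elementary bound $d_X(x_{p_i}, x_{q_i}) \geq q_i - p_i$ (because the subpath $X[x_{p_i}, x_{q_i}]$ already contains the $q_i - p_i + 1$ distinct $X$-endpoints $x_{p_i}, x_{p_i+1}, \ldots, x_{q_i}$), and the partition identity $m = 2t + \sum_i (q_i - p_i - 1) + s$ where $t$ is the number of pairs, one concludes $\sum_i d_X(x_{p_i}, x_{q_i}) \geq \sum_i (q_i - p_i) = m - t - s \geq (m-s)/2 > m/4$, using the trivial bound $t \leq (m-s)/2$.
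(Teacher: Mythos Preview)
Your proof is correct and takes a genuinely different route from the paper's. The paper runs an iterative peeling process: at each step it selects, among all remaining chords, a parallel-or-$Y$-sharing pair $(e_i,e'_i)$ that \emph{maximises} $d_X(x_i,x'_i)$, and then deletes every chord whose $X$-endpoint lies between $x_i$ and $x'_i$. Maximality is used to show that every surviving chord interlaces both $e_i$ and $e'_i$; the process terminates with a set $E_{t+1}$ of pairwise-interlacing chords, and one takes whichever of the two outcomes is large. The bound $\sum d_X(x_i,x'_i)\ge m/4$ comes from telescoping $|E_i|-|E_{i+1}|$.

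You instead linearise the problem: sort chords by their $X$-endpoint, pass to the integer sequence $(h_i)$ of $Y$-heights, and split on the length of the longest increasing subsequence. Your greedy then picks pairs by a ``leftmost available start, rightmost valid end'' rule, and the key counting observation---that skipped indices themselves form an increasing subsequence, hence there are fewer than $m/2$ of them---replaces the telescoping in the paper. Both arguments hinge on a maximality choice (the paper maximises $d_X$; you maximise the right endpoint $q$), but the resulting structures are different: the paper's pairs are nested by $X$-slabs, while yours are ordered left-to-right along $X$. Your reduction to LIS is arguably more transparent and makes the $m/2$ threshold appear naturally, whereas the paper's approach is more geometric and perhaps closer in spirit to how the lemma is subsequently used (with the section-pairs $X_i,Y_i$ explicitly carved out). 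The quantitative outcome is identical.
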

\begin{proof}
We execute the following process. To start, set $E_1 = E(X,Y)$. For each $i \geq 1$, we do as follows. If every two chords in $E_i$ are interlacing, then stop. Otherwise, among all pairs of chords in $E_i$ which are parallel or share a vertex, choose one pair $e_i,e'_i \in E_i$, $e_i = (x_i,y_i)$, $e'_i = (x'_i,y'_i)$, which maximizes $d_X(x_i,x'_i)$. Without loss of generality, let us assume that $x_i$ is above $x'_i$ (note that $x_i,x'_i$ are distinct by our assumption that every vertex in $X$ is incident to at most one chord; $y_i,y'_i$ may be equal). Since $e_i,e'_i$ are parallel or share a vertex, it holds that either $y_i = y'_i$ or $y_i$ is above $y'_i$. 
Let $X^*$ be the set of vertices of $X$ between $x_i$ and $x'_i$ (including $x_i,x'_i$), and let $Y^*$ be the set of vertices of $Y$
between $y_i$ and $y'_i$ (including $y_i,y'_i$). Observe that if $e = (x,y) \in E_i$ is a chord with $y \in Y^*$, then $x$ must be in $X^*$. Indeed, if $x$ is above $x_i$ then $e,e'_i$ are parallel or share a vertex and $d_X(x,x'_i) > d_X(x_i,x'_i)$; similarly, if $x$ is below $x'_i$ then $e,e_i$ are parallel or share a vertex and $d_X(x,x_i) > d_X(x_i,x'_i)$. In both cases, we get a contradiction to the maximality of $e_i,e'_i$. 

Let $F$ be the set of chords $(x,y) \in E_i$ with $x \in X^*$. Clearly $|F| \geq 2$ because $e_i,e'_i \in F$. Set $E_{i+1} := E_i \setminus F$. 
Note that $d_X(x_i,x'_i) = |X^*| - 1 \geq |F| - 1 \geq |F|/2 = (|E_i| - |E_{i+1}|)/2$, where the first inequality holds because every vertex of $X^*$ is incident to at most one chord. 
Above we have shown that $F$ contains all chords $(x,y)$ with $y \in Y^*$. It is now easy to see that each $e = (x,y) \in E_{i+1}$ interlaces both $e_i$ and $e'_i$. Indeed, let $e = (x,y) \in E_{i+1}$. Then $x$ is either above $x_i$ or below $x'_i$, and $y$ is either above $y_i$ or below $y'_i$. If $x$ is above $x_i$ and $y$ is above $y_i$, then $e,e'_i$ are parallel with $d_X(x,x_i) > d(x_i,x'_i)$, contradicting the maximality of $e_i,e'_i$. Similarly, it is impossible that $x$ is below $x'_i$ and $y$ is below $y'_i$. In the remaining two cases, $e$ interlaces $e_i,e'_i$. This ensures that Item 2 will be satisfied.

Suppose that the process continues for $t$ steps. Then in $E_{t+1}$, every two chords are interlacing. So if $|E_{t+1}| \geq m/2$ then we are done. Suppose then that $|E_{t+1}| < m/2$. Consider the pairs of chords $e_i = (x_i,y_i),e'_i = (x'_i,y'_i)$, $1 \leq i \leq t$. We have already shown that items 1-2 hold. For item 3, we have
$$
\sum_{i = 1}^t {d_X(x_i,x'_i)} \geq \frac{1}{2}\sum_{i = 1}^t (|E_i| - |E_{i+1}|) = \frac{1}{2}(|E_1| - |E_{t+1}|) \geq m/4,
$$
as required. 
\end{proof}

The next two lemmas are concerned with combining paths over a family of interlacing subsections. 

\begin{lem}\label{lem:interlacing two far apart lengths}
Let $X,Y$ be a section-pair with $3t$ interlacing chords, whose lengths belong into an interval of size $D$. 
Then there are paths $P_1,\dots,P_{t}$ between $x^{\text{t}}$ and $y^{\text{t}}$ such that $1 \leq |P_{i+1}| - |P_i| \leq 2D$ for every $1 \leq i \leq t - 1$. In particular, $|P_{t}| \geq |P_1| + t - 1$. 
\end{lem}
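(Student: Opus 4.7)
The plan is to label the $3t$ pairwise-interlacing chords as $e_1,\ldots,e_{3t}$ in the natural interlacing order, so that $x_1$ lies above $x_2$ above $\ldots$ above $x_{3t}$ on $X$, and correspondingly $y_1$ lies below $y_2$ below $\ldots$ below $y_{3t}$ on $Y$. The first key step is the basic path-length formula: for any increasing index set $i_1<i_2<\ldots<i_{2k+1}$, the zig-zag walk
\[
X[x^{\text{t}},x_{i_1}]\cdot e_{i_1}\cdot Y[y_{i_1},y_{i_2}]\cdot e_{i_2}\cdot X[x_{i_2},x_{i_3}]\cdot \ldots \cdot e_{i_{2k+1}}\cdot Y[y_{i_{2k+1}},y^{\text{t}}]
\]
is a valid path from $x^{\text{t}}$ to $y^{\text{t}}$: non-self-intersection is forced by the interlacing order, and a short computation using $\ell_i=d_X(x^{\text{t}},x_i)+d_Y(y^{\text{t}},y_i)$ shows that its length equals $(2k+1)+\sum_{j=1}^{2k+1}(-1)^{j+1}\ell_{i_j}$.

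With this in hand, I would construct the paths $P_1,\ldots,P_t$ iteratively, where $P_k$ is such a zig-zag built from $2k-1$ chords. Start with $P_1$ a one-chord path. To pass from $P_k$ to $P_{k+1}$, I would append a new pair $e_p,e_q$ with $p<q$ both strictly greater than every chord index already used in $P_k$; then
\[
|P_{k+1}|-|P_k| \;=\; 2+\ell_q-\ell_p.
\]
The upper bound $2+\ell_q-\ell_p\le D+1\le 2D$ is automatic; the real task is to arrange $\ell_q\ge \ell_p-1$, so that the increment is at least $1$. The driving combinatorial observation is that among any sequence of chord lengths in a size-$D$ interval, if every pair $p<q$ violated $\ell_q\ge \ell_p-1$, then the lengths would be strictly decreasing with consecutive gaps of at least $2$, so there could be at most $(D+1)/2$ of them. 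Hence, as long as the reservoir of unused chords exceeds $(D+1)/2$, a suitable extension pair $(p,q)$ is guaranteed to exist.

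The main obstacle I expect is the intermediate regime where $D$ is comparable to $t$, where the greedy reservoir bound becomes tight. Two safety valves should cover this case. First, if the chord-length multiset admits $\ge t$ distinct values, one can bypass the iterative scheme entirely and use $t$ single-chord paths $x^{\text{t}}\to x_i\to y_i\to y^{\text{t}}$ of length $1+\ell_i$; the corresponding $t$ distinct lengths already sit inside an interval of size $D-1$ and have consecutive differences in $[1,D-1]\subseteq[1,2D]$. Second, if fewer than $t$ distinct lengths appear, pigeonhole produces a value $\ell^*$ of multiplicity at least $\lceil 3t/(t-1)\rceil$, and then $(2k+1)$-chord zig-zags using only the chords of length $\ell^*$ produce lengths $(2k+1)+\ell^*$ with consecutive differences exactly $2$, comfortably within $[1,2D]$. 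The generous factor of three in ``$3t$ chords'' is what buys enough slack to patch these cases together into a single scheme. Finally, the ``in particular'' assertion $|P_t|\ge |P_1|+t-1$ is an immediate consequence of $|P_{k+1}|\ge |P_k|+1$ at every step.
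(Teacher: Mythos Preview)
Your path-length formula for zig-zag paths is correct and elegant, and the first safety valve (at least $t$ distinct chord lengths) works fine. However, there is a genuine gap in the second safety valve and in the greedy scheme when $D$ is comparable to $t$.

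Concretely: if there are fewer than $t$ distinct chord lengths, pigeonhole only yields a value $\ell^*$ of multiplicity $\lceil 3t/(t-1)\rceil$, which for $t\ge 4$ is merely $4$. With $m$ chords of length $\ell^*$ you can build zig-zags on $1,3,5,\ldots$ of them, producing only $\lceil m/2\rceil\approx 2$ path lengths --- far short of $t$. The greedy scheme does not rescue you either. Take $t=100$, $D=200$, and arrange the $300$ chord lengths as three consecutive copies of the run $196,194,\ldots,2,0$ followed by three more $0$'s. There are $99<t$ distinct values, so safety valve~1 does not apply; inside each run every consecutive pair has $\ell_{q}=\ell_p-2<\ell_p-1$, so your increment $2+\ell_q-\ell_p$ is never positive there; only a handful of index pairs at the block boundaries are usable. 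Your greedy produces at most about $6$ paths and your second safety valve at most $3$, but $100$ are required. (Such a length profile is realizable for interlacing chords: since $\ell_i=d_X(x^{\text t},x_i)+d_Y(y^{\text t},y_i)$ with the first summand strictly increasing and the second strictly decreasing in $i$, any sequence $(\ell_i)$ can be obtained by spacing the $x_i$'s far enough apart.)

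The paper sidesteps this with a different local gadget. Instead of appending a pair to a global $x^{\text t}$--$y^{\text t}$ zig-zag, it works inductively and, from each triple of consecutive chords $e_1,e_2,e_3$, constructs four paths \emph{from $x_1$ to $x_3$} (both endpoints on $X$): the bare segment $X[x_1,x_3]$, two paths using two chords each, and one using $e_1,e_3$. Their lengths are $a_1+a_2$, $a_1+b_2+2$, $a_2+b_1+2$, $b_1+b_2+2$ (with $a_i=d_X(x_i,x_{i+1})$, $b_i=d_Y(y_i,y_{i+1})$); a one-line check shows these four numbers cannot all coincide, while any two differ by at most $2D$. Hence \emph{every} triple yields two usable lengths, consuming exactly three chords per step, and these $x_1$-to-$x_3$ gadgets are then spliced onto the paths coming from induction on the remaining $3(t-1)$ chords. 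The extra option $X[x_1,x_3]$, which uses no chord at all, has no analogue in your append-a-pair increment $2+\ell_q-\ell_p$; it is precisely this fourth option that makes the $3t$ budget work unconditionally.
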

\begin{proof}
Let $(x_1,y_1),\dots,(x_{3t},y_{3t}) \in E(X,Y)$ be the given family of interlacing chords, with $x_i$ above $x_j$ and $y_i$ below $y_j$ for each $1 \leq i < j \leq 3t$. For $1 \leq i \leq 3t-1$, put $d_i := d_Y(y_i,y_{i+1}) - d_X(x_i,x_{i+1})$. Note that $d_i$ is precisely the difference between the lengths of $(x_i,y_i)$ and $(x_{i+1},y_{i+1})$, so by our assumption, we have $|d_i|\le D-1.$
We prove the lemma by induction on $t$. 
In the base case $t=1$, the desired path $P_1$ can be chosen as the trivial path of the chord $(x_1,y_1)$.

For the induction step, we will first find two paths $Q_1,Q_2$ between $x_1$ and $x_3$ such that $1 \leq |Q_2| - |Q_1| \leq 2D$, and such that $Q_1,Q_2$ do not use any vertices of $X$ below $x_3$ or any vertices of $Y$ above $y_3$. To find such $Q_1,Q_2$, we proceed as follows.  
For $i = 1,2$, put $a_i := d_X(x_i,x_{i+1})$ and $b_i := d_Y(y_i,y_{i+1})$, so that $d_i = b_i - a_i$. 
Consider the following four paths between $x_1$ and $x_3$.
\begin{itemize}
    \item $P'_1 = X[x_1,x_3]$, \hspace{4.65cm} depicted in blue\hspace{0.3cm} in \Cref{fig:0.4};
    \item $P'_2 = X[x_1,x_2],(x_2,y_2),Y[y_2,y_3],(y_3,x_3)$, \hspace{0.4cm}depicted in red\hspace{0.5cm} in \Cref{fig:0.4};
    \item $P'_3 = (x_1,y_1),Y[y_1,y_2],(y_2,x_2),X[x_2,x_3]$, \hspace{0.4cm}depicted in green\hspace{0.15cm} in \Cref{fig:0.4};
    \item $P'_4 = (x_1,y_1),Y[y_1,y_3],(y_3,x_3)$, \hspace{1.95cm} depicted in yellow in \Cref{fig:0.4}.
\end{itemize}

\begin{figure}
\RawFloats
\begin{minipage}[t]{0.45\textwidth}
\centering
\captionsetup{width=\textwidth}
\begin{tikzpicture}[scale=0.8]

\defPt{0}{0}{x_b}
\defPt{0}{4.5}{x_t}
\defPtm{($0.17*(x_b)+0.83*(x_t)$)}{x1}
\defPtm{($0.5*(x_b)+0.5*(x_t)$)}{x2}

\defPtm{($0.83*(x_b)+0.17*(x_t)$)}{x3}


\defPt{3.5}{0}{y_b}
\defPt{3.5}{4.5}{y_t}


\defPtm{($0.17*(y_b)+0.83*(y_t)$)}{y3}

\defPtm{($0.5*(y_b)+0.5*(y_t)$)}{y2}
\defPtm{($0.83*(y_b)+0.17*(y_t)$)}{y1}



\draw[line width= 0.75 pt] (x_b) -- (x3);
\draw[line width= 0.75 pt] (y_b) -- (y1);
\draw[line width= 0.75 pt] (x_t) -- (x1);
\draw[line width= 0.75 pt] (y_t) -- (y3);

\foreach \i in {1,...,3}
{
\node[] at ($(x\i)+(-0.5,0)$) {\small $x_{\i}$};
\node[] at ($(y\i)+(0.5,0)$) {\small $y_{\i}$};
}

\node[] at ($0.5*(x1)+0.5*(x2)+(-1,0)$) {\small $a_{1}$};
\node[] at ($0.5*(x3)+0.5*(x2)+(-1,0)$) {\small $a_{2}$};

\node[] at ($0.5*(y1)+0.5*(y2)+(1,0)$) {\small $b_{1}$};
\node[] at ($0.5*(y3)+0.5*(y2)+(1,0)$) {\small $b_{2}$};

\draw[line width= 2 pt, blue] ($(x1)+(-0.07,0)$) -- ($(x3)+(-0.07,0)$);

\draw[line width= 2 pt, red] ($(x1)+(0.07,0)$) -- ($(x2)+(0.07,0)$);
\draw[line width= 2 pt, red] ($(x2)+(0,0.07)$) -- ($(y2)+(0,0.07)$);
\draw[line width= 2 pt, red] ($(x3)+(0,-0.07)$) -- ($(y3)+(0,-0.07)$);
\draw[line width= 2 pt, red] ($(y3)+(-0.07,0)$) -- ($(y2)+(-0.07,0)$);

\draw[line width= 2 pt, green] ($(x2)+(0.07,0)$) -- ($(x3)+(0.07,0)$);
\draw[line width= 2 pt, green] ($(x1)+(0,0.07)$) -- ($(y1)+(0,0.07)$);
\draw[line width= 2 pt, green] ($(y1)+(-0.07,0)$) -- ($(y2)+(-0.07,0)$);
\draw[line width= 2 pt, green] ($(x2)+(0,-0.07)$) -- ($(y2)+(0,-0.07)$);

\draw[line width= 2 pt, yellow] ($(x1)+(0,-0.07)$) -- ($(y1)+(0,-0.07)$);
\draw[line width= 2 pt, yellow] ($(y1)+(0.07,0)$) -- ($(y3)+(0.07,0)$);
\draw[line width= 2 pt, yellow] ($(x3)+(0,0.07)$) -- ($(y3)+(0,0.07)$);

\node[] at ($(x_t)+(-0.5,0.1)$) {\small $x^{\text{t}}$};
\node[] at ($(x_b)+(-0.5,0)$) {\small $x^{\text{b}}$};
\node[] at ($(y_t)+(0.5,0.1)$) {\small $y^{\text{t}}$};
\node[] at ($(y_b)+(0.5,0)$) {\small $y^{\text{b}}$};

\draw[] (x_b) \smvx;
\draw[] (x_t) \smvx;
\foreach \i in {1,3}
{
\draw[] (x\i) \vx;
\draw[] (y\i) \vx;
}
\draw[] (y_b) \smvx;
\draw[] (y_t) \smvx;

\draw[] (x2) \smvx;
\draw[] (y2) \smvx;

\end{tikzpicture}
\caption{Different paths $P'_i$.}
\label{fig:0.4}
\end{minipage}\hfill%
\begin{minipage}[t]{0.49\textwidth}
\centering
\captionsetup{width=\textwidth}
\begin{tikzpicture}[scale=0.8]

\defPt{0}{0}{x_b}
\defPt{0}{4.5}{x_t}
\defPtm{($0.1*(x_b)+0.9*(x_t)$)}{x1}
\defPtm{($0.3*(x_b)+0.7*(x_t)$)}{x2}

\defPtm{($0.5*(x_b)+0.5*(x_t)$)}{x3}
\defPtm{($0.7*(x_b)+0.3*(x_t)$)}{x4}

\defPtm{($0.775*(x_b)+0.225*(x_t)$)}{x5}
\defPtm{($0.85*(x_b)+0.15*(x_t)$)}{x6}
\defPtm{($0.925*(x_b)+0.075*(x_t)$)}{x7}

\defPt{3.5}{0}{y_b}
\defPt{3.5}{4.5}{y_t}

\defPtm{($0.075*(y_b)+0.925*(y_t)$)}{y7}
\defPtm{($0.15*(y_b)+0.85*(y_t)$)}{y6}
\defPtm{($0.225*(y_b)+0.775*(y_t)$)}{y5}

\defPtm{($0.3*(y_b)+0.7*(y_t)$)}{y4}
\defPtm{($0.5*(y_b)+0.5*(y_t)$)}{y3}

\defPtm{($0.7*(y_b)+0.3*(y_t)$)}{y2}
\defPtm{($0.9*(y_b)+0.1*(y_t)$)}{y1}

\draw[line width= 0.75 pt] (x_b) -- (x_t);
\draw[line width= 0.75 pt] (y_b) -- (y_t);

\draw[line width= 2 pt] (x4) -- (x_b);

\draw[line width= 2 pt] (y4) -- (y_t);

\foreach \i in {1,...,7}
{
\draw[line width= 1 pt] (x\i) -- (y\i);
}
\foreach \i in {1,...,4}
{
\node[] at ($(x\i)+(-0.5,0.1)$) {\small $x_{\i}$};
\node[] at ($(y\i)+(0.5,0.1)$) {\small $y_{\i}$};
}

\node[] at ($0.5*(x5)+0.5*(x7)+(-0.5,0.1)$) {\small $\vdots$};
\node[] at ($0.5*(y5)+0.5*(y7)+(0.5,0.1)$) {\small $\vdots$};

\draw[spring, line width =2pt, red] (x4) to[out=-0,in=-120] (y7);
\draw[line width =2pt, red] (x_t) -- (x2);
\draw[line width =2pt, red] (x2) -- (y2);
\draw[line width =2pt, red] (y3) -- (y2);
\draw[line width =2pt, red] (y3) -- (x3);
\draw[line width =2pt, red] (x3) -- (x4);
\draw[line width =2pt, red] (y7) -- (y_t);

\node[] at ($(x_t)+(-0.5,0.1)$) {\small $x^{\text{t}}$};
\node[] at ($(x_b)+(-0.5,0)$) {\small $x^{\text{b}}$};
\node[] at ($(y_t)+(0.5,0.1)$) {\small $y^{\text{t}}$};
\node[] at ($(y_b)+(0.5,0)$) {\small $y^{\text{b}}$};

\draw[] (x_b) \smvx;
\draw[] (x_t) \smvx;
\foreach \i in {1,...,4}
{
\draw[] (x\i) \smvx;
\draw[] (y\i) \smvx;
}
\draw[] (y_b) \smvx;
\draw[] (y_t) \smvx;

\end{tikzpicture}
\caption{A combination of $P_2'$ and a path provided by induction (marked as zigzag in the figure).}
\label{fig:0.5}
\end{minipage}
\end{figure}

Observe that $P'_1,\dots,P'_4$ do not use any vertices of $X$ below $x_3$ or any vertices of $Y$ above $y_3$.
We have $|P'_1| = a_1 + a_2$, $|P'_2| = a_1 + b_2 + 2$, $|P'_3| = a_2 + b_1 + 2$, $|P'_4| = b_1 + b_2 + 2$. 
Hence, $|P'_2| - |P'_1| = d_2 + 2$, $|P'_3| - |P'_1| = d_1 + 2$, $|P'_4| - |P'_1| = d_1 + d_2 + 2$, $|P'_3| - |P'_2| = d_1 - d_2$, $|P'_4| - |P'_2| = d_1$ and $|P'_4| - |P'_3| = d_2$. So we see that for all $Q_1,Q_2 \in \{P'_1,\dots,P'_4\}$, it holds that $||Q_2| - |Q_1|| \leq |d_1| + |d_2| + 2 \leq 2D$.  

Observe that the four numbers $a_1 + a_2, a_1 + b_2 + 2, a_2 + b_1 + 2, b_1 + b_2 + 2$ cannot all be equal. Indeed, if the first three are equal then $b_1 = a_1 - 2$ and $b_2 = a_2 - 2$, which means that $b_1 + b_2 + 2 = a_1 + a_2 - 2 \neq a_1 + a_2$. It follows that there are $Q_1,Q_2 \in \{P'_1,\dots,P'_4\}$ with $|Q_1| \neq |Q_2|$, say $|Q_1| < |Q_2|$. We have already shown that $|Q_2| - |Q_1| \leq 2D$. So $Q_1,Q_2$ satisfy all of our requirements.

We now complete the induction step using the paths $Q_1,Q_2$ found above. Let $t \geq 2$, and apply the induction hypothesis with parameter $t - 1$ and with $X$ and $Y$ 
replaced by $X[x_4,x^{\text{b}}]$ and $Y[y^{\text{t}},y_4]$, which still contain $3(t-1)$ of our interlacing chords. This way we obtain paths $R_1,\dots,R_{t-1}$ between $x_4$ and $y^{\text{t}}$ such that $1 \leq |R_{i+1}| - |R_i| \leq 2D$ for every $1 \leq i \leq t - 2$. 
For $1 \leq i \leq t-1$, let $P_i$ be the concatenation of $X[x^{\text{t}},x_1]$, $Q_1$, $X[x_3,x_4]$ and $R_i$. Let $P_{t}$ be the concatenation of $X[x^{\text{t}},x_1]$, $Q_2$, $X[x_3,x_4]$ and $R_{t-1}$. See \Cref{fig:0.5} for an illustration of the resulting paths $P_i$.
For each $1 \leq i \leq t-2$, we have $|P_{i+1}| - |P_i| = |R_{i+1}| - |R_i|$ and hence 
$1 \leq |P_{i+1}| - |P_i| \leq 2D$. Lastly, $|P_{t}| - |P_{t - 1}| = |Q_2| - |Q_1|$, and hence $1 \leq |P_{t}| - |P_{t - 1}| \leq 2D$ as well. This completes the proof. 
\end{proof}

\begin{lem}\label{lem:interlacing_parallel two far apart lengths}
Let $X,Y$ be a section-pair, let $t \geq 1$ be odd, and let $X_1,Y_1;\dots;X_t,Y_t$ be an interlacing collection of subsection pairs. 
Let $d_1,\dots,d_t \geq 1$. 
Suppose that for each $1 \leq i \leq t$, there are two paths $Q_i^R,Q_i^B \subseteq X_i \cup Y_i$ with $|Q_i^R| - |Q_i^B| \geq d_i$, which go between $x^{\text{t}}_i$ and $y^{\text{t}}_i$ if $i$ is odd and between $x^{\text{b}}_i$ and $y^{\text{b}}_i$ if $i$ is even. Set $D := \max_{i = 1,\dots,t}{(|X_i| + |Y_i|)}$. Then there are paths $P_1,\dots,P_{(t+3)/2}$ between $x^{\text{t}}$ and $y^{\text{t}}$ such that $|P_{(t+3)/2}| - |P_1| \geq \sum_{i = 1}^t{d_i}$ and such that $1 \leq |P_{i+1}| - |P_i| \leq 2D$ for every $1 \leq i \leq (t+1)/2$. 
\end{lem}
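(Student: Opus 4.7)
The plan is to build a canonical concatenated path from $x^{\text{t}}$ to $y^{\text{t}}$ that threads the $Q_i$'s in the order $i=t,t-1,\ldots,1$, using fixed sections of $X$ and $Y$ as connectors, and then to obtain the desired $(t+3)/2$ paths by selectively replacing $Q_i^B$ with $Q_i^R$ in carefully chosen pairs of indices.

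First I set up the stitching. For each choice $\sigma=(\sigma_1,\ldots,\sigma_t)\in\{R,B\}^t$, define a path $P(\sigma)$ by starting at $x^{\text{t}}$, descending along $X$ to $x_t^{\text{t}}$, traversing $Q_t^{\sigma_t}$ to $y_t^{\text{t}}$, ascending $Y$ to $y_{t-1}^{\text{b}}$, traversing $Q_{t-1}^{\sigma_{t-1}}$ to $x_{t-1}^{\text{b}}$, descending $X$ to $x_{t-2}^{\text{t}}$, and so on in this alternating manner. After $Q_1^{\sigma_1}$ one ends at $y_1^{\text{t}}$ and finally ascends $Y$ to $y^{\text{t}}$. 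The parity conditions on the endpoints of the $Q_i$'s (tops for odd $i$, bottoms for even $i$), combined with the interlacing structure (which makes $X_i$ lie above $X_{i-1}$ while $Y_i$ lies below $Y_{i-1}$), guarantee that each connector segment on $X$ or $Y$ between the exit of $Q_i$ and the entry of $Q_{i-1}$ is traversed in a consistent direction and that all connectors and $Q_i$'s are pairwise internally disjoint, so $P(\sigma)$ is a valid simple path from $x^{\text{t}}$ to $y^{\text{t}}$. Its length has the form $|P(\sigma)| = C + \sum_{i=1}^t |Q_i^{\sigma_i}|$, where $C$ is the total length of the fixed connector segments and depends only on the subsection-pair structure, not on $\sigma$.

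Now set $\Delta_i := |Q_i^R| - |Q_i^B|$. By hypothesis $\Delta_i \geq d_i \geq 1$, and since $Q_i^R, Q_i^B$ are simple paths in $X_i \cup Y_i$ with $|Q_i^B| \geq 1$, we have $\Delta_i \leq |X_i|+|Y_i| \leq D$. I then define the target paths $P_j := P(\sigma^{(j)})$ for $j = 1, \ldots, (t+3)/2$, where $\sigma^{(j)}_i = R$ iff $i \in S_j$, with $S_1 = \emptyset$, $S_j = \{2,3,\ldots,2j-1\}$ for $2 \leq j \leq (t+1)/2$, and $S_{(t+3)/2} = \{1,2,\ldots,t\}$. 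This yields $|P_{j+1}| - |P_j| = \Delta_{2j}+\Delta_{2j+1} \in [2, 2D]$ for $1 \leq j \leq (t-1)/2$, and $|P_{(t+3)/2}| - |P_{(t+1)/2}| = \Delta_1 \in [1, D]$, so every consecutive difference lies in $[1,2D]$. Telescoping gives $|P_{(t+3)/2}| - |P_1| = \sum_{i=1}^t \Delta_i \geq \sum_{i=1}^t d_i$, as required.

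The main obstacle is verifying that $P(\sigma)$ really is a simple path; one has to check that the parities of the endpoints of the $Q_i$'s dovetail with the interlacing positions of the $X_i$'s and $Y_i$'s so that the alternating walks along $X$ and $Y$ are well-defined subpaths in consistent directions and no vertex is revisited. Once this stitching is in place, the length formula is immediate, and the remaining ingredient is the combinatorial idea of pairing the switches $\Delta_{2j} \leftrightarrow \Delta_{2j+1}$ to keep consecutive jumps within $2D$ while reserving the singleton switch $\Delta_1$ for the final step.
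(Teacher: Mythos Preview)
Your proof is correct and essentially the same as the paper's, just presented directly rather than inductively. The paper argues by induction on $t$, peeling off the two subsection pairs $X_1,Y_1$ and $X_2,Y_2$ at each step and concatenating with the paths from the recursive call; unrolling that induction yields exactly your canonical stitched path $P(\sigma)$ and a sequence of $R/B$-choices that differ from yours only in bookkeeping (the paper's singleton switch is $\Delta_t$ rather than your $\Delta_1$, and the pairs are traversed in the opposite order). Your explicit global description of $P(\sigma)$ and the selection of the sets $S_j$ make the structure of all $(t+3)/2$ paths visible at once, which is arguably cleaner, but the underlying idea---thread the $Q_i$'s alternately through $X$- and $Y$-connectors using the interlacing geometry, then flip $B\to R$ two indices at a time to keep each jump in $[1,2D]$---is identical.
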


\begin{proof}
Without loss of generality, assume that $X_i$ is above $X_j$ for all $i<j$ (and consequently, the opposite is true for the $Y_i$'s).
The proof is by induction on $t$. 
For the case $t = 1$, simply take $P_1/P_2$ to be the concatenation of $X[x^{\text{t}},x^{\text{t}}_1]$, $Q_1^B/Q_1^R$ and $Y[y^{\text{t}}_1,y^{\text{t}}]$. Clearly, $|P_2| - |P_1| = |Q_1^R| - |Q_1^B|$, which immediately implies that $|P_2| - |P_1| \geq d_1 \geq 1$. On the other hand, since $Q_1^R,Q_1^B$ are both contained in $X_1 \cup Y_1$, we have $|P_2| - |P_1| = |Q_1^R| - |Q_1^B| \leq |X_1| + |Y_1| \leq D$, as required. 

Suppose now that $t \geq 3$. For each $c = R,B$, define a path $Q'_c$ from $x^{\text{t}}$ to $x_2^{\text{b}}$ as follows:
$$Q'_c = X[x^{\text{t}},x_1^{\text{t}}], Q_1^c, Y[y_1^{\text{t}},y_2^{\text{b}}], Q_2^c.$$

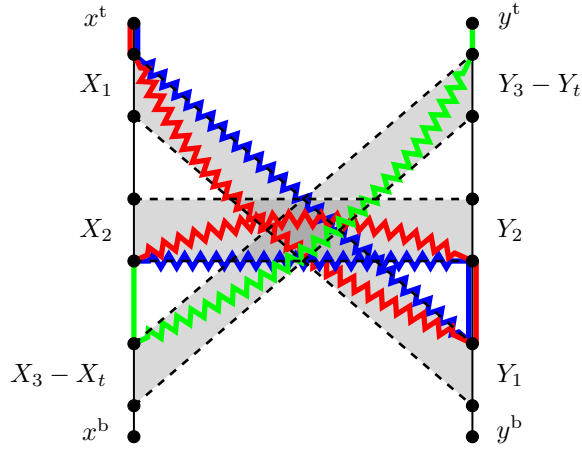
\begin{figure}
\centering
\begin{tikzpicture}
\defPt{0}{0}{x_b}
\defPt{0}{5.5}{x_t}
\defPtm{($0.075*(x_b)+0.925*(x_t)$)}{x6}
\defPtm{($0.225*(x_b)+0.775*(x_t)$)}{x5}

\defPtm{($0.425*(x_b)+0.575*(x_t)$)}{x4}
\defPtm{($0.575*(x_b)+0.425*(x_t)$)}{x3}

\defPtm{($0.775*(x_b)+0.225*(x_t)$)}{x2}
\defPtm{($0.925*(x_b)+0.075*(x_t)$)}{x1}

\defPt{4.5}{0}{y_b}
\defPt{4.5}{5.5}{y_t}

\defPtm{($0.075*(y_b)+0.925*(y_t)$)}{y2}
\defPtm{($0.225*(y_b)+0.775*(y_t)$)}{y1}

\defPtm{($0.425*(y_b)+0.575*(y_t)$)}{y4}
\defPtm{($0.575*(y_b)+0.425*(y_t)$)}{y3}

\defPtm{($0.775*(y_b)+0.225*(y_t)$)}{y6}
\defPtm{($0.925*(y_b)+0.075*(y_t)$)}{y5}

\draw[color=white, fill=black!50!,fill opacity=0.3] (x1) -- (x2) -- (y2) -- (y1) -- cycle;
\draw[color=white, fill=black!50!,fill opacity=0.3] (x3) -- (x4) -- (y4) -- (y3) -- cycle;
\draw[color=white, fill=black!50!,fill opacity=0.3] (x5) -- (x6) -- (y6) -- (y5) -- cycle;

\draw[line width =2pt, blue] ($(x_t)+(0.05,0)$) -- ($(x6)+(0.05,0)$);
\draw[line width =2pt, red] ($(x_t)+(-0.05,0)$) -- ($(x6)+(-0.05,0)$);

\draw[line width =2pt, blue] ($(y3)+(-0.05,0)$) -- ($(y6)+(-0.05,0)$);
\draw[line width =2pt, red] ($(y3)+(0.05,0)$) -- ($(y6)+(0.05,0)$);

\draw[line width= 0.75 pt] (x_b) -- (x_t);
\draw[line width= 0.75 pt] (y_b) -- (y_t);

\draw[spring, line width =2pt, blue] (x6) -- (y6);
\draw[spring, line width =2pt, red] (x6) to[out=-65, in=155] (y6);

\draw[spring, line width =2pt, blue] (x3) -- (y3);
\draw[spring, line width =2pt, red] (x3) to[out=25, in=-205] (y3);

\draw[line width =2pt, green] (x3) -- (x2);
\draw[spring, line width =2pt, green] (x2) to[out=25, in=-115] (y2);
\draw[line width =2pt, green] (y2) -- (y_t);

\draw[] (x_b) \smvx;
\draw[] (x_t) \smvx;
\foreach \i in {1,...,6}
{
\draw[] (x\i) \smvx;
\draw[] (y\i) \smvx;
}
\draw[] (y_b) \smvx;
\draw[] (y_t) \smvx;



\foreach \i in {1,...,6}
{
\draw[line width= 1 pt, dashed] (x\i) -- (y\i);
}




\node[] at ($0.5*(x1)+0.5*(x2)+(-1,0)$) {\small $X_3-X_t$};
\node[] at ($0.5*(x3)+0.5*(x4)+(-0.5,0)$) {\small $X_2$};
\node[] at ($0.5*(x5)+0.5*(x6)+(-0.5,0)$) {\small $X_1$};

\node[] at ($0.5*(y1)+0.5*(y2)+(0.9,0)$) {\small $Y_3-Y_{t}$};
\node[] at ($0.5*(y3)+0.5*(y4)+(0.5,0)$) {\small $Y_2$};
\node[] at ($0.5*(y5)+0.5*(y6)+(0.5,0)$) {\small $Y_1$};

\node[] at ($(x_t)+(-0.5,0.1)$) {\small $x^{\text{t}}$};
\node[] at ($(x_b)+(-0.5,0.1)$) {\small $x^{\text{b}}$};
\node[] at ($(y_t)+(0.5,0.1)$) {\small $y^{\text{t}}$};
\node[] at ($(y_b)+(0.5,0.1)$) {\small $y^{\text{b}}$};

\end{tikzpicture}
\captionsetup{width=0.85\textwidth}
\caption{$Q_1^R,Q_2^R$ are marked as red zigzag paths, $Q'_R$ is depicted in red, $Q_1^B,Q_2^B$ are marked as blue zigzag paths, $Q'_B$ is depicted in blue, and some $R_i$ is depicted in green.}
\label{fig:0.6}
\end{figure}

See \Cref{fig:0.6} for an illustration. 
We have
$|Q'_R| - |Q'_B| = \sum_{i = 1}^2(|Q_i^R| - |Q_i^B|) \geq d_1 + d_2$. 
On the other hand, since $Q^R_i,Q^B_i \subseteq X_i \cup Y_i$ (for $i = 1,2$), we have that $|Q^R_i| - |Q^B_i| \leq D$ and hence $|Q'_R| - |Q'_B| \leq 2D$. Note that $Q'_B,Q'_R$ do not use any vertices of $X$ below $x_2^{\text{b}}$ or any vertices of $Y$ above $y_2^{\text{t}}$. 
Now apply the induction hypothesis with parameter $t-2$ and with $X$ and $Y$ replaced by $X[x_2^{\text{b}},x^{\text{b}}]$ and $Y[y^{\text{t}},y_3^{\text{b}}]$. This gives paths 
$R_1,\dots,R_{(t+1)/2}$ between $x_2^{\text{b}}$ and $y^{\text{t}}$ such that $|R_{(t+1)/2}| - |R_1| \geq \sum_{i = 3}^t{d_i}$ and such that $1 \leq |R_{i+1}| - |R_i| \leq 2D$ for every $1 \leq i \leq (t-1)/2$. 
For $1 \leq i \leq (t+1)/2$, let $P_i$ be the concatenation of $Q'_B$ and $R_i$. Let $P_{(t+3)/2}$ be the concatenation of $Q'_R$ and $R_{(t+1)/2}$. Then  
$|P_{(t+3)/2}| - |P_1| = |Q'_R| - |Q'_B| + |R_{(t+1)/2}| - |R_1| \geq \sum_{i = 1}^t{d_i}$. Moreover, for every $1 \leq i \leq (t-1)/2$ we have $|P_{i+1}| - |P_i| = |R_{i+1}| - |R_i|$ and hence $1 \leq |P_{i+1}| - |P_i| \leq 2D$. 
Finally, we have $|P_{(t+3)/2}| - |P_{(t+1)/2}| = |Q'_R| - |Q'_B|$ and hence $1 \leq |P_{(t+3)/2}| - |P_{(t+1)/2}| \leq 2D$. This \nolinebreak completes \nolinebreak the \nolinebreak proof.
\end{proof}

Using Lemmas \ref{lem:interlacing_parallel splitting}, \ref{lem:interlacing two far apart lengths} and \ref{lem:interlacing_parallel two far apart lengths}, we can now prove the following lemma, which states that in a section pair with $m$ chords one can find two paths whose lengths differ by $\Omega(m)$, provided every vertex on one side is incident to at most one chord. 

\begin{lem}\label{lem:distant_paths}
Let $X,Y$ be a section-pair with $m \geq 12$ chords, and suppose that each vertex of $X$ is incident to at most one chord. Then there are two paths $P,P'$ between $x^{\text{t}}$ and $y^{\text{t}}$ such that $|P'| \geq |P| + \Omega(m)$. 
By symmetry, there are two paths $Q,Q'$ between $x^{\text{b}}$ and $y^{\text{b}}$ such that $|Q'| \geq |Q| + \Omega(m)$. 
\end{lem}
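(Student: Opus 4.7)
The plan is to apply Lemma \ref{lem:interlacing_parallel splitting} to $X,Y$ and handle its two outcomes separately. In its first case we obtain $m/2$ pairwise-interlacing chords, so setting $t := \lfloor m/6 \rfloor$ and taking $D$ to be any upper bound on the chord lengths (e.g.\ $|X|+|Y|$), Lemma \ref{lem:interlacing two far apart lengths} immediately produces paths $P_1, \dots, P_t$ between $x^{\text{t}}$ and $y^{\text{t}}$ with $|P_t| - |P_1| \geq t - 1 = \Omega(m)$, and we are done.

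The substance lies in the second outcome, which yields pairs $(e_i, e'_i)$, $e_i = (x_i, y_i), e'_i = (x'_i, y'_i)$, $i = 1, \dots, t$, with $\sum_{i=1}^t d_X(x_i, x'_i) \geq m/4$. If some single pair already has $d_X(x_i, x'_i) \geq m/8$, the two trivial paths of $e_i$ and $e'_i$ both go from $x^{\text{t}}$ to $y^{\text{t}}$ and their lengths differ by $d_X(x_i, x'_i) + d_Y(y_i, y'_i) \geq m/8$, so we are done. Otherwise all $d_X(x_i, x'_i) < m/8$, which forces $t \geq 3$. Setting $X_i := X[x_i, x'_i]$ and $Y_i := Y[y_i, y'_i]$ (a single vertex when $y_i = y'_i$), the key structural claim is that $(X_1, Y_1), \dots, (X_t, Y_t)$ forms an interlacing collection of subsection pairs. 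Indeed, property 2 of Lemma \ref{lem:interlacing_parallel splitting} forces $e_j, e'_j$ to interlace $e_i, e'_i$ for $i < j$, and a short case analysis combined with the maximality of $d_X(x_i, x'_i)$ used to select $(e_i, e'_i)$ rules out the ``surrounding'' configuration where $X_j$ contains $X_i$ (this would give $d_X(x_j, x'_j) > d_X(x_i, x'_i)$ while $(e_j, e'_j)$ was available at step $i$). Hence $X_j$ lies entirely above or entirely below $X_i$, and the interlacing positions of the chords force $Y_j$ to lie on the opposite side of $Y_i$.

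We finish by applying Lemma \ref{lem:interlacing_parallel two far apart lengths}. Discarding one pair if $t$ is even leaves an odd number $t' \geq t - 1$ of pairs with $\sum_{i=1}^{t'} d_X(x_i, x'_i) \geq m/4 - m/8 = m/8$. For each remaining pair, define $Q_i^B$ to be the single-edge path consisting of $e_i$ (for odd $i$) or $e'_i$ (for even $i$), and $Q_i^R$ to be the ``wrap-around'' path traversing $X_i$, the opposite chord of the pair, and then $Y_i$; these go between the required top/bottom endpoints of $X_i, Y_i$ and satisfy $|Q_i^R| - |Q_i^B| = |X_i| + |Y_i| \geq d_X(x_i, x'_i) =: d_i \geq 1$. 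Lemma \ref{lem:interlacing_parallel two far apart lengths} then produces paths between $x^{\text{t}}$ and $y^{\text{t}}$ whose length difference is at least $\sum d_i \geq m/8 = \Omega(m)$. The main obstacle is establishing the interlacing structure in the second case; this is where both the maximality condition built into Lemma \ref{lem:interlacing_parallel splitting} and the hypothesis that every vertex of $X$ is incident to at most one chord (ensuring the $X_i$'s are well-separated and $d_X(x_i, x'_i) \geq 1$) are essential. The second assertion, about paths between $x^{\text{b}}$ and $y^{\text{b}}$, follows by applying the same argument after exchanging top and bottom throughout the section-pair.
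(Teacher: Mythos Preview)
Your proof is correct and follows essentially the same route as the paper's: apply Lemma \ref{lem:interlacing_parallel splitting}, dispatch the pairwise-interlacing case via Lemma \ref{lem:interlacing two far apart lengths}, and in the second case feed the pairs $(e_i,e'_i)$ into Lemma \ref{lem:interlacing_parallel two far apart lengths} after trimming to odd $t$. One small simplification worth noting: you do not need to reach into the proof of Lemma \ref{lem:interlacing_parallel splitting} for the maximality of $d_X(x_i,x'_i)$ (and your argument as written only treats one of the three possible overlap configurations) --- item 2 of that lemma by itself, together with the distinctness of the $x$-endpoints, already forces the $X_i$'s to be pairwise disjoint and in interlacing position with the $Y_i$'s, which is exactly how the paper justifies this step; your preliminary case $d_X(x_i,x'_i)\ge m/8$ is likewise unnecessary once one discards the minimum-$d_i$ pair as the paper does.
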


\begin{proof}
Apply Lemma \ref{lem:interlacing_parallel splitting}. If $E(X,Y)$ contains a family of $m/2$ pairwise-interlacing chords, then we are done by Lemma \ref{lem:interlacing two far apart lengths}. Otherwise, let $e_i = (x_i,y_i)$, $e'_i = (x'_i,y'_i)$, $1 \leq i \leq t$, be as in Lemma \ref{lem:interlacing_parallel splitting}.
Without loss of generality we may assume, for each $1 \leq i \leq t$, that $x_i$ is above $x'_i$, and hence either $y_i = y'_i$ or $y_i$ is above $y'_i$.
For $1 \leq i \leq t$, let $X_i := X[x_i,x'_i]$, $Y_i := Y[y_i,y'_i]$; so $x_i^{\text{t}} = x_i$, $x_i^{\text{b}} = x'_i$, $y_i^{\text{t}} = y_i$, and $y_i^{\text{b}} = y'_i$. Item 2 of Lemma \ref{lem:interlacing_parallel splitting} guarantees that, after possibly permuting the coordinates $1,\dots,t$, we have that $X_i$ is above $X_j$ and $Y_i$ is below $Y_j$ for every $1 \leq i < j \leq t$. 
Set $d_i := d_X(x_i,x'_i)$.
Observe that for each $1 \leq i \leq t$, $Q^B_i := (x_i,y_i)$ and $Q^R_i := X[x_i,x'_i], (x'_i,y'_i), Y[y'_i,y_i]$ are two paths between $x_i$ and $y_i$, both contained in $X_i \cup Y_i$, such that $|Q^R_i| - |Q^B_i| = d_X(x_i,x'_i) + d_Y(y_i,y'_i) \geq d_i$. Similarly, $Q^B_i := (x'_i,y'_i)$ and $Q^R_i := X[x'_i,x_i], (x_i,y_i), Y[y_i,y'_i]$ are two paths between $x'_i$ and $y'_i$, both contained in $X_i \cup Y_i$, such that $|Q^R_i| - |Q^B_i| \geq d_i$.
Hence, we are in the setting of Lemma \ref{lem:interlacing_parallel two far apart lengths}. 
By item 3 of Lemma \ref{lem:interlacing_parallel splitting}, we have $\sum_{i=1}^t {d_i} \geq m/4$. 
If $t$ is even, then omit a pair $(X_i,Y_i)$ for which $d_i = \min\{d_1,\dots,d_t\}$; this way we guarantee that $t$ is odd (which is necessary for invoking Lemma \ref{lem:interlacing_parallel two far apart lengths}), while making sure that the sum $\sum d_i$ over the remaining $i$ is at least $m/8$. 
To complete the proof, apply Lemma \ref{lem:interlacing_parallel two far apart lengths} and set $P := P_1$ \nolinebreak and \nolinebreak $P' := P_{(t+3)/2}$. 
\end{proof}

Finally, we are in a position to prove Lemma \ref{lem:close-length chords}. This lemma states that if a section-pair has many chords whose lengths are close to each other, say all contained in some (small) interval $J$, then one can find a sequence of path lengths, with any two consecutive lengths differing by $\Theta(|J|)$. This lemma will play a key role in the proof of Theorem \ref{thm:main}. 

\begin{lem}\label{lem:close-length chords}
Let $X,Y$ be a section-pair, and suppose that each vertex of $X$ is incident to at most one chord. Let $E \subseteq E(X,Y)$ be a set of chords, and let $J \subseteq \mathbb{N}$ be an interval such that for each $e \in E$, the length of $e$ belongs to $J$.  
Then for $k = \Omega(|E|/|J|)$, there are paths $P_1,\dots,P_{k}$ between $x^{\text{t}}$ and $y^{\text{t}}$ such that 
$|P_{i+1}| - |P_i| = \Theta(|J|)$ for every $1 \leq i \leq k-1$. 
\end{lem}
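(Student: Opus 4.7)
The plan is to partition $X$ into consecutive blocks $X^{(1)},\dots,X^{(r)}$, each of length $|J|$, and exploit a key observation: if two chords $e_1=(x_1,y_1),e_2=(x_2,y_2)\in E$ satisfy $d_X(x_1,x_2)\ge |J|$, then $e_1,e_2$ must interlace. Indeed, if they were parallel (or shared a $Y$-vertex), the length difference would be at least $d_X(x_1,x_2)\ge |J|$, contradicting the fact that both lengths lie in an interval of size $|J|$. Consequently, chords with $X$-endpoints in non-adjacent blocks interlace pairwise, with distinct $X$- and $Y$-endpoints. Let $m_i$ denote the number of chords of $E$ with $X$-endpoint in $X^{(i)}$, and let $Y^{(i)}\subseteq Y$ be the shortest subpath of $Y$ containing all the corresponding $Y$-endpoints; since chord lengths lie in $J$, we have $|Y^{(i)}|\le 2|J|$. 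By averaging, either the odd- or even-indexed blocks contain at least $|E|/2$ chords of $E$; without loss of generality assume the odd ones do.

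I would then split into cases based on the distribution of these chords among the odd blocks. In the \emph{rich} case, odd blocks with $m_i\ge 12$ collectively contain at least $|E|/4$ chords. For each such block I invoke \Cref{lem:distant_paths} inside the subsection pair $(X^{(i)},Y^{(i)})$ to obtain two paths inside $X^{(i)}\cup Y^{(i)}$ with length difference $\Omega(m_i)$, connecting either the top-top or bottom-bottom endpoints of $(X^{(i)},Y^{(i)})$ (whichever the parity demands). After possibly dropping one block so the count $t$ is odd and relabelling to fit the convention, the family $\{(X^{(i)},Y^{(i)})\}$ forms an interlacing collection of subsection pairs, so \Cref{lem:interlacing_parallel two far apart lengths} produces paths $P_1,\dots,P_{(t+3)/2}$ between $x^{\text{t}}$ and $y^{\text{t}}$ with total length span $\Omega(\sum m_i)=\Omega(|E|)$ and consecutive differences at most $2D=O(|J|)$, where $D=\max(|X^{(i)}|+|Y^{(i)}|)=O(|J|)$. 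In the \emph{poor} case, odd blocks with $m_i\le 11$ contain at least $|E|/4$ chords, so there are $\Omega(|E|)$ nonempty poor odd blocks; picking one chord from each gives $\Omega(|E|)$ pairwise-interlacing chords in $E$, to which \Cref{lem:interlacing two far apart lengths} applies (with $D=|J|$) to produce $\Omega(|E|)$ paths between $x^{\text{t}}$ and $y^{\text{t}}$ with consecutive differences in $[1,2|J|]$ and total span $\Omega(|E|)$.

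In either case I end up with a sequence of paths between $x^{\text{t}}$ and $y^{\text{t}}$ whose lengths span an interval of width $\Omega(|E|)$ and whose consecutive differences lie in $[1,O(|J|)]$. The final step is a greedy subsampling: start from the shortest path and repeatedly take the next path whose length exceeds the current one by at least $|J|$. Since the previous consecutive difference was at most $O(|J|)$, each subsampled jump lands in $[|J|,O(|J|)]=\Theta(|J|)$, and since the total span is $\Omega(|E|)$ while each subsampled jump is $O(|J|)$, I obtain $\Omega(|E|/|J|)$ paths with the desired property. The main obstacle is that \Cref{lem:interlacing two far apart lengths} and \Cref{lem:interlacing_parallel two far apart lengths} only guarantee consecutive jumps of size at least $1$, so without both the boost from \Cref{lem:distant_paths} in the rich case (to provide span $\Omega(|E|)$ rather than merely $\Omega(|E|/|J|)$) and the block-size control giving $D=O(|J|)$ (to cap the jumps introduced by subsampling), the subsampling step would lose a factor of $|J|$ and fall short of the target $\Omega(|E|/|J|)$; applying Mirsky's theorem to get a maximum interlacing antichain of size $|E|/|J|$ and feeding it to \Cref{lem:interlacing two far apart lengths} directly fails for exactly this reason.
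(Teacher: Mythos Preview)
Your proposal is correct and follows essentially the same approach as the paper's proof: partition $X$ into blocks of size $|J|$, pass to same-parity blocks so that chords from distinct blocks interlace, split into the case of many sparse blocks (handled via \Cref{lem:interlacing two far apart lengths}) versus many dense blocks (handled via \Cref{lem:distant_paths} followed by \Cref{lem:interlacing_parallel two far apart lengths}), and finish by greedily subsampling to make consecutive gaps $\Theta(|J|)$. The only point where you are slightly less explicit than the paper is in the rich case when dropping one block to make $t$ odd: one should note that a single block contributes at most $|J|$ chords (since each vertex of $X$ touches at most one chord), so under the standing assumption $|E|\gg |J|$ the remaining sum is still $\Omega(|E|)$.
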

\begin{proof}
We will assume that $|E| \gg |J|$, as otherwise, the assertion is trivial.
We only consider the chords in $E$ (removing all others). 
Partition $X$ into subsections $X_1,\dots,X_s$, such that $X_1,\dots,X_{s-1}$ have $|J|$ vertices each, and $X_s$ has less than $|J|$ vertices. 
(As usual, $X_i$ is below $X_j$ for $i < j$.)
Note that if $i \neq j$ have the same parity, then for all $x_1 \in X_i, x_2 \in X_j$ we have $d_X(x_1,x_2) > |J|$. 
Without loss of generality, we shall assume that 
$\sum_{i \text{ even}}{e(X_i)} \geq \sum_{i \text{ odd}}{e(X_i)}$, and hence
$\sum_{i \text{ even}}{e(X_i)} \geq |E|/2$ (the other case is symmetrical).
For convenience, let us set $I_0 = \{i \in [s] : i \text{ even}\}$. For each $i \in I_0$, let $Y_i$ be the minimal subpath of $Y$ containing all neighbours of $X_i$.
Note that $\sum_{i \in I_0}{e(X_i,Y_i)} \geq |E|/2$.

Observe that for all $(x_1,y_1),(x_2,y_2) \in E(X_i,Y_i)$, we have $d_Y(y_1,y_2) \leq d_X(x_1,x_2) + |J| < 2|J|$, where the first inequality follows from the fact that the lengths of $(x_1,y_1)$ and $(x_2,y_2)$ both belong to $J$, and hence differ by at most $|J|$. 
So we see that $|Y_i| \leq 2|J|$. 
Next, observe that for every pair $i,j \in I_0$ with $i < j$ and for all $(x_1,y_1) \in E(X_i,Y_i)$, $(x_2,y_2) \in E(X_j,Y_j)$, it must be the case that $y_2$ is below $y_1$. Indeed, if not, then $(x_1,y_1),(x_2,y_2)$ either are parallel or share a vertex, which means that the difference between the lengths of $(x_1,y_1)$ and $(x_2,y_2)$ is at least $d_X(x_1,x_2) > |J|$, contradicting that both of these lengths are in $J$. So we see that $Y_j$ is above $Y_i$. In other words, every chord in $E(X_i,Y_i)$ interlaces every chord \nolinebreak in \nolinebreak $E(X_j,Y_j)$.  

Let $I'$ (resp. $I''$) be the set of all $i \in I_0$ with $0 < e(X_i,Y_i) < 12$ (resp. $e(X_i,Y_i) \geq 12$). Suppose first that $\sum_{i \in I'}{e(X_i,Y_i)} \geq |E|/4$. Then by picking one chord from $E(X_i,Y_i)$ for each $i \in I'$, we obtain a family of pairwise-interlacing chords of size $\Omega(|E|)$. 
Let $(x_1,y_1),\dots,(x_{3t},y_{3t})$ be such a family with $t = \Omega(|E|)$.
By Lemma \ref{lem:interlacing two far apart lengths}, there are paths
$P_1,\dots,P_{t}$ between $x^{\text{t}}$ and $y^{\text{t}}$ such that $1 \leq |P_{i+1}| - |P_i| \leq 2|J|$ for every $1 \leq i \leq t - 1$. It is easy to see that under these conditions, we can find indices $1 \leq i_1 < \dots < i_k$ with $k = \Omega(t/|J|) = \Omega(|E|/|J|)$, such that $|P_{i_{j+1}}| - |P_{i_j}| = \Theta(|J|)$ for every $1 \leq j \leq k-1$. Thus, the assertion of the lemma holds in this case.

Suppose now that $\sum_{i \in I''}{e(X_i,Y_i)} \geq |E|/4$. Write $I'' = \{i_1,\dots,i_t\}$, where $i_1 < \dots < i_t$. 
If $t$ is even, then we only consider $i_1,\dots,i_{t-1}$, thus making sure that the number of indices in consideration is odd. Observe that $e(X_{i_t},Y_{i_t}) \leq |X_{i_t}| \leq |J|$ (since every vertex of $X$ is incident to at most one chord), and hence $\sum_{j=1}^{t-1}{e(X_{i_j},Y_{i_j})} \geq |E|/4 - |J| \geq |E|/8$. So with a slight abuse of notation, we will assume from now on that $t$ is odd and $\sum_{j=1}^{t}{e(X_{i_j},Y_{i_j})} \geq |E|/8$.  

Fix any $1 \leq j \leq t$. Applying Lemma \ref{lem:distant_paths} to $X_{i_j}$ and $Y_{i_j}$, gives paths $P,P' \subseteq X_{i_j} \cup Y_{i_j}$ between $x_{i_j}^{\text{t}}$ and $y_{i_j}^{\text{t}}$ with $|P'| \geq |P| + \Omega(e(X_{i_j},Y_{i_j}))$, as well as paths $Q,Q' \subseteq X_{i_j} \cup Y_{i_j}$ between $x_{i_j}^{\text{b}}$ and $y_{i_j}^{\text{b}}$ with $|Q'| \geq |Q| + \Omega(e(X_{i_j},Y_{i_j}))$. 
Hence, we are in the setting of Lemma \ref{lem:interlacing_parallel two far apart lengths} with $d_j = \Omega(e(X_{i_j},Y_{i_j}))$. Moreover, $D := \max_{j = 1,\dots,t}{(|X_{i_j}| + |Y_{i_j}|)} \leq 3|J|$. By Lemma \ref{lem:interlacing_parallel two far apart lengths}, there are paths $P_1,\dots,P_{(t+3)/2}$ between $x^{\text{t}}$ and $y^{\text{t}}$ such that $|P_{(t+3)/2}| - |P_1| \geq \sum_{j = 1}^t{d_j} = \sum_{j = 1}^t{\Omega(e(X_{i_j},Y_{i_j}))} = \Omega(|E|)$, and such that $1 \leq |P_{i+1}| - |P_i| \leq 2D = O(|J|)$ for every $1 \leq i \leq (t+1)/2$. Again, it is easy to see that under these conditions, we can find indices $1 \leq i_1 < \dots < i_k$ with $k = \Omega(|E|/|J|)$, such that $|P_{i_{j+1}}| - |P_{i_j}| = \Theta(|J|)$ for every $1 \leq j \leq k-1$. So we have established the assertion of the lemma in this case as well. This completes the proof. 
\end{proof}

\subsection{Putting it all together}
The following is the main lemma in the proof of Theorem \ref{thm:main}.

\begin{lem}\label{lem:main-part}
Let $X,Y$ be a section-pair, and suppose that each vertex of $X$ is incident to at most one chord. Let $X_1,Y_1;\ldots; X_t,Y_t$ be a parallel collection of subsection pairs such that for every $i$, there are $m$ chords between $X_i$ and $Y_i$. Provided $ t \ge \sqrt{\log m}$, there are at least $\Omega(m/2^{4\sqrt{\log m \log \log m}})$ different lengths of paths from $x^{\text{t}}$ to $y^{\text{t}}$.
\end{lem}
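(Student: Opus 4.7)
Set $\varepsilon := \sqrt{\log \log m / \log m}$ and $k := \lfloor 1/\varepsilon\rfloor$; the hypothesis $t \ge \sqrt{\log m}$ guarantees $k \le t$, giving enough subsection pairs to iterate over. The plan is to execute the iterative strategy from Section~\ref{subsec:sketch}, absorbing one subsection pair per iteration. I shall maintain the invariant that after $i$ iterations (using pairs $X_1, Y_1; \ldots; X_i, Y_i$) there is a set $S_i$ of different path-lengths, each realized by an actual path in $X \cup Y$ plus chords, with $|S_i| \ge (m^\varepsilon / \mathrm{polylog}\, m)^i$ and contained in an interval of width $O(m^{(i+1)\varepsilon} \cdot \mathrm{polylog}^i\, m)$. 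Setting $i = k$ yields $m / 2^{O(\sqrt{\log m \log \log m})}$ different lengths, which for an appropriate constant matches the claim.

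The inductive step from $S_{i-1}$ to $S_i$ applies the following dichotomy on pair $(X_i, Y_i)$, with parameter $\Delta$ equal to the width of the interval containing $S_{i-1}$. Let $E_0 \subseteq E(X_i, Y_i)$ be a maximum family of chords whose pairwise length differences are all at least $\Delta$, and write $M := |E_0|$. If $M$ is large, specifically $M \ge m^{1-\varepsilon}/\Delta$, I would terminate early: the trivial paths associated with $E_0$ supply $M$ path-lengths pairwise at least $\Delta$ apart, so combining with $S_{i-1}$ via Lemma~\ref{lem:spread-close} already produces $M \cdot |S_{i-1}| \ge m^{1-o(1)}$ distinct lengths, which beats the target. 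Otherwise, by the maximality of $E_0$, every chord's length lies within $\Delta$ of some member of $E_0$, so all chord lengths are covered by $M$ intervals of width $2\Delta$; pigeonholing, some such interval $J$ contains at least $m/M \ge m^\varepsilon \Delta$ chords. Applying Lemma~\ref{lem:close-length chords} to this cluster gives $\Omega(m^\varepsilon)$ paths inside $(X_i, Y_i)$ with consecutive length-differences $\Theta(\Delta)$, which forms $L_i$, and Lemma~\ref{lem:spread-close} then guarantees $|S_{i-1} + L_i| = |S_{i-1}| \cdot |L_i|$, preserving the invariant on both size and interval width. The base case $i = 1$ is handled analogously, taking $\Delta := m^\varepsilon$ and producing the initial $S_1$.

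\textbf{Main obstacle.} The principal technical difficulty will be converting the abstract sum-set $S_{i-1} + L_i$ into an actual family of vertex-disjoint paths from $x^{\text{t}}$ to $y^{\text{t}}$. A path from $x^{\text{t}}$ to $y^{\text{t}}$ cannot naively use chords of more than one subsection pair, because once it crosses from $X$ to $Y$ it must ascend along $Y$ to reach $y^{\text{t}}$, and this ascent commits (and therefore blocks) the $Y$-vertices in the higher pairs. To combine contributions from multiple pairs, the proof must arrange, at each pair $(X_i, Y_i)$ used as an intermediate ``transit'' pair, a top-bottom path-pair whose $X$-side path is traversed during the down-descent and whose $Y$-side path is traversed (in reverse) during the up-ascent, so that the $Y_i$-vertices used by the $X$-side detour do not conflict with the later $Y$-ascent. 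In particular, each iteration for $i\ge 2$ should really produce a family of top-bottom path-pairs of $(X_i, Y_i)$ with many distinct total lengths, rather than single $x_i^{\text{t}}$-to-$y_i^{\text{t}}$ paths; this will probably require a top-bottom path-pair variant of Lemma~\ref{lem:close-length chords} and a more refined inductive statement that threads the paths consistently across the parallel structure $X_1, Y_1; \ldots; X_t, Y_t$.
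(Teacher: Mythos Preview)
Your high-level plan matches the paper's: the same choice of $\varepsilon$, the same iterative accumulation of path-lengths across the parallel subsection pairs, and the same dichotomy at each step (chord lengths spread out $\Rightarrow$ terminate early via Lemma~\ref{lem:spread-close}; chord lengths clustered in a short interval $\Rightarrow$ invoke Lemma~\ref{lem:close-length chords}). You also correctly isolate the central obstacle, namely that Lemma~\ref{lem:close-length chords} only produces top-to-top paths in $(X_i,Y_i)$, whereas concatenating contributions from several pairs requires top-bottom \emph{path-pairs}.

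The gap is that you have no mechanism to manufacture those path-pairs, and your guess that one should prove a ``top-bottom path-pair variant of Lemma~\ref{lem:close-length chords}'' is not how the paper proceeds. The missing tool is Lemma~\ref{lem:splitting_process}. Before running your dichotomy on $(X_{i+1},Y_{i+1})$, the paper first applies Lemma~\ref{lem:splitting_process} to split it into a parallel family of subpairs $(X'_j,Y'_j)$ in each of which either some vertex has degree $\Omega(e(X'_j,Y'_j)/\log m)$ --- which immediately yields $\Omega(m/\log m)$ distinct trivial path-lengths and terminates the whole argument --- or there exists a single chord $(x_j,y_j)$ interlacing a $1/O(\log m)$ fraction of the chords in $E(X'_j,Y'_j)$. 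This interlacing chord is what makes path-pairs possible: it supplies one of the two paths in every pair (the path $X'_j[x_j'^{\text{b}},x_j],(x_j,y_j),Y'_j[y_j,y_j'^{\text{t}}]$), while the chords it interlaces all lie in a smaller subpair $(X''_j,Y''_j)$ disjoint from that path; Lemma~\ref{lem:close-length chords} applied to $(X''_j,\bar{Y''_j})$ (with $Y''_j$ flipped) then gives the variable-length second path. Without Lemma~\ref{lem:splitting_process} there is no reason such an interlacing chord exists; if all chords in $E(X_{i+1},Y_{i+1})$ happen to be pairwise parallel, your proposed reduction to path-pairs cannot get off the ground. The paper also inserts a dyadic pigeonholing step to normalise the chord-counts $e(X''_j,Y''_j)$ to a common value $k$ across $j$, and uses Lemma~\ref{lem:sum-sets} to aggregate path-pair lengths over the different $j$; these are routine once the interlacing chord is available, but they explain the additional polylogarithmic losses you anticipated.
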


\begin{proof}
We may and will assume throughout that $m$ is large enough. Let us for convenience define $\eps:=\sqrt{\frac{\log \log m}{\log m}}$. Observe that $m^{\eps}=2^{\sqrt{\log m \log \log m}}$, so our goal is to find $\Omega(m^{1-4\eps})$ different lengths of paths from $x^{\text{t}}$ to $y^{\text{t}}$. 
Let us assume towards a contradiction that there are less than $\Omega(m^{1-4\eps})$ different lengths of paths from $x^{\text{t}}$ to $y^{\text{t}}$.
Recall that $x_i^{\text{t}}$/$x_i^{\text{b}}$ denotes the top/bottom vertex of $X_i$, and similarly $y_i^{\text{t}}$/$y_i^{\text{b}}$ denotes the top/bottom vertex of $Y_i$.
We will prove the following claim by induction on $i$. 

\textbf{Induction statement.} For a large enough constant $C>0$ and for every $i \le 1/\eps$, there are below-paths from $x_i^{\text{t}}$ to $y_i^{\text{t}}$ of $\frac{m^{i\eps}}{6(C\log m)^{2(i-1)}}$ different lengths, all of which belong to an interval of size $m^{i\eps}$. 

Before turning to the proof of this statement, let us show that it indeed implies the lemma. By choosing $i=\floor{1/\eps}$, we have $m^{i\eps} \ge m^{1-\eps}$ and 
\begin{equation}\label{eq:log-epsilons}
    (\log m)^{1/\eps}=2^{\sqrt{\log m \log \log m}}=m^{\eps}.
\end{equation} 
So, using, e.g. that $C^2 \leq \log m$, we get 
$$
\frac{m^{i\varepsilon}}{(C\log m)^{2(i-1)}} \geq \frac{m^{1-\varepsilon}}{(\log m)^{3i}} \geq 
\frac{m^{1-\varepsilon}}{(\log m)^{3/\varepsilon}} \geq m^{1-4\varepsilon}.
$$
This implies we find at least $\Omega(m^{1-4\eps})$ path lengths, as desired.

From now on, our goal is to prove the above induction statement. 
For the base case of $i=1$, note that since for any chord $(x,y)$, the trivial path $X[x^{\text{t}},x],(x,y),Y[y,y^{\text{t}}]$ has length exactly the length of $(x,y)$ plus one, there can be at most $m^{1-4\eps} \leq m^{1-\varepsilon}$ different chord lengths. Since there are $m$ chords between $X_1$ and $Y_1$, there must exist a length which appears at least $m^{\eps}$ many times. 
Apply \Cref{lem:interlacing two far apart lengths}  with $t=\lfloor m^{\eps}/3 \rfloor$ to the family of chords having this length, noting that chords of the same length must be interlacing, and that for such chords the parameter $D$ in Lemma \ref{lem:interlacing two far apart lengths} equals $1$. Lemma \ref{lem:interlacing two far apart lengths} gives us a collection of $t \geq m^{\eps}/6$ lengths of paths between $x_1^{\text{t}}$ and $y_1^{\text{t}}$ with all consecutive differences being either $1$ or $2$ apart, so in total belonging into an interval of size at most $2t+1 \leq m^{\eps}$, as desired.

Let us now move on to the induction step. 
So assume that for some $1\le i \le 1/\eps-1$, there are below-paths going from $x_{i}^{\text{t}}$ to $y_{i}^{\text{t}}$ of $\frac{m^{i\eps}}{6(C\log m)^{2(i-1)}}$ different lengths, all of which belong to an interval of size $m^{i\eps}$. Note that we can extend all of these paths to below-paths from $x_{i+1}^{\text{b}}$ to 
$y_{i+1}^{\text{b}}$ by using $X[x_{i}^{\text{t}},x_{i+1}^{\text{b}}]$ and $Y[y_{i}^{\text{t}},y_{i+1}^{\text{b}}]$, and still keep the same property concerning their lengths, as each length is increased by the same number. 

Our goal now is to find in $X_{i+1},Y_{i+1}$ \tbpps{} with many
(roughly $m^{\varepsilon}/\text{poly}\log m$) different lengths, among which any two consecutive ones are $\Theta(m^{i\eps})$ apart. We can use any such \tbpp{} to extend any of our below-paths from $x_{i+1}^{\text{b}}$ to $y_{i+1}^{\text{b}}$ into a below-path from $x_{i+1}^{\text{t}}$ to $y_{i+1}^{\text{t}}$ with the length being the sum of the lengths of our original path and that of the \tbpp{} we used. \Cref{lem:spread-close} will then allow us to ``fill in the gaps'' of size $\Theta(m^{i\varepsilon})$ between the lengths of the \tbpps{} we found, by using the path lengths given by the inductive assumption. This will complete the induction step. See \Cref{fig:induction-step} for an illustration. 

Let us apply \Cref{lem:splitting_process} to the section pair $X_{i+1},Y_{i+1}$ to obtain a parallel collection of subsection pairs $X_1',Y_1'; \ldots; X_{t}',Y_{t}'$ such that $\sum_{j=1}^t e(X_j',Y_j') \ge m/24$ and either for every $j$ we have a vertex in $X_j' \cup Y_j'$ incident to ${e(X_j',Y_j')}/{(6\log m)}$ chords, or for every $j$ there is a chord $(x_j,y_j)$ in $E(X_j',Y_j')$ interlacing at least ${e(X_j',Y_j')}/{(6\log m)}$ chords in $E(X_j',Y_j')$. See Figures \ref{fig:split-case-1} and \ref{fig:split-case-2}. In the former case we get $\Omega(m/\log m)\ge\Omega(m^{1-4\eps})$ chords, any two of which are either parallel or share a vertex, which means that all of these chords have different lengths. This in turn means that we have $\Omega(m^{1-4\eps})$ different lengths of (trivial) paths from $x^{\text{t}}$ to $y^{\text{t}}$, giving us a contradiction. So we may assume we are in the latter case.

\begin{figure}
\RawFloats
\begin{minipage}[t]{0.4\textwidth}
\centering
\captionsetup{width=\textwidth}
\begin{tikzpicture}[scale=1]

\defPt{0}{0}{x_b}
\defPt{0}{4.5}{x_t}

\defPtm{($0.125*(x_b)+0.875*(x_t)$)}{x4}
\defPtm{($0.375*(x_b)+0.625*(x_t)$)}{x3}

\defPtm{($0.525*(x_b)+0.475*(x_t)$)}{x2}
\defPtm{($0.875*(x_b)+0.125*(x_t)$)}{x1}

\defPt{2.5}{0}{y_b}
\defPt{2.5}{4.5}{y_t}

\defPtm{($0.125*(y_b)+0.875*(y_t)$)}{y4}
\defPtm{($0.375*(y_b)+0.625*(y_t)$)}{y3}

\defPtm{($0.525*(y_b)+0.475*(y_t)$)}{y2}
\defPtm{($0.875*(y_b)+0.125*(y_t)$)}{y1}

\draw[color=white, fill=black!50!,fill opacity=0.3] (x1) -- (x2) -- (y2) -- (y1) -- cycle;
\draw[color=white, fill=black!50!,fill opacity=0.3] (x3) -- (x4) -- (y4) -- (y3) -- cycle;

\draw[line width= 0.75 pt] (x_b) -- (x_t);
\draw[line width= 0.75 pt] (y_b) -- (y_t);

\draw[spring, line width= 2 pt, red] (x4) -- (y3);
\draw[spring, line width= 2 pt, red] (y4) -- (x3);
\draw[line width= 2 pt, red] (y2) -- (y3);
\draw[line width= 2 pt, red] (x2) -- (x3);

\draw[spring, line width= 2 pt, red] (x2) to[out=-85, in=-95] (y2);

\foreach \i in {2,...,4}
{
\draw[] (x\i) \smvx;
\draw[] (y\i) \smvx;
}

\foreach \i in {1,...,4}
{
\draw[line width= 1 pt, dashed] (x\i) -- (y\i);
}

\node[] at ($0.6*(x1)+0.4*(x2)+(-0.8,0)$) {\small $X_1-X_i$};
\node[] at ($0.5*(x3)+0.5*(x4)+(-1.05,0)$) {\small $X_{i+1}$};

\node[] at ($0.6*(y1)+0.4*(y2)+(0.75,0)$) {\small $Y_1-Y_i$};
\node[] at ($0.5*(y3)+0.5*(y4)+(1,0)$) {\small $Y_{i+1}$};

\node[] at ($(x2)+(-0.5,-0.1)$) {\small $x_i^{\text{t}}$};
\node[] at ($(x3)+(-0.5,-0.05)$) {\small $x_{i+1}^{\text{b}}$};
\node[] at ($(x4)+(-0.5,0.1)$) {\small $x_{i+1}^{\text{t}}$};

\node[] at ($(y2)+(0.5,-0.1)$) {\small $y_i^{\text{t}}$};
\node[] at ($(y3)+(0.5,-0.05)$) {\small $y_{i+1}^{\text{b}}$};
\node[] at ($(y4)+(0.5,0.1)$) {\small $y_{i+1}^{\text{t}}$};


\end{tikzpicture}
\caption{The lower part of the path is provided by the induction and can be chosen among $\tilde{\Omega}(m^{i\eps})$ paths with lengths in an interval of size $m^{i\eps}$. The upper \tbpp{} for $X_{i+1},Y_{i+1}$ can be chosen among a collection of $\tilde{\Omega}(m^{\eps})$ of them with lengths $m^{i\eps}$ apart.}
\label{fig:induction-step}
\end{minipage}\hfill
\begin{minipage}[t]{0.28\textwidth}
\centering
\captionsetup{width=\textwidth}
\begin{tikzpicture}[scale=0.8]

\defPt{0}{0}{x_b}
\defPt{0}{5.5}{x_t}
\defPtm{($0.075*(x_b)+0.925*(x_t)$)}{x6}
\defPtm{($0.275*(x_b)+0.725*(x_t)$)}{x5}

\defPtm{($0.425*(x_b)+0.575*(x_t)$)}{x4}
\defPtm{($0.625*(x_b)+0.375*(x_t)$)}{x3}

\defPtm{($0.725*(x_b)+0.275*(x_t)$)}{x2}
\defPtm{($0.925*(x_b)+0.075*(x_t)$)}{x1}

\defPt{3.5}{0}{y_b}
\defPt{3.5}{5.5}{y_t}

\defPtm{($0.075*(y_b)+0.925*(y_t)$)}{y6}
\defPtm{($0.275*(y_b)+0.725*(y_t)$)}{y5}

\defPtm{($0.425*(y_b)+0.575*(y_t)$)}{y4}
\defPtm{($0.625*(y_b)+0.375*(y_t)$)}{y3}

\defPtm{($0.725*(y_b)+0.275*(y_t)$)}{y2}
\defPtm{($0.925*(y_b)+0.075*(y_t)$)}{y1}

\draw[color=white, fill=black!50!,fill opacity=0.3] (x1) -- (x2) -- (y2) -- (y1) -- cycle;
\draw[color=white, fill=black!50!,fill opacity=0.3] (x3) -- (x4) -- (y4) -- (y3) -- cycle;
\draw[color=white, fill=black!50!,fill opacity=0.3] (x5) -- (x6) -- (y6) -- (y5) -- cycle;

\draw[] (x_b) \smvx;
\draw[] (x_t) \smvx;
\draw[] (y_b) \smvx;
\draw[] (y_t) \smvx;

\draw[line width= 0.75 pt] (x_b) -- (x_t);
\draw[line width= 0.75 pt] (y_b) -- (y_t);


\foreach \i in {1,...,6}
{
\draw[line width= 1 pt, dashed] (x\i) -- (y\i);
}

\draw[line width= 1 pt] ($0.25*(x1)+0.75*(x2)$) -- ($0.5*(y1)+0.5*(y2)$);
\draw[line width= 1 pt] ($0.5*(x1)+0.5*(x2)$) -- ($0.5*(y1)+0.5*(y2)$);
\draw[line width= 1 pt] ($0.75*(x1)+0.25*(x2)$) -- ($0.5*(y1)+0.5*(y2)$);

\draw[line width= 1 pt] ($0.25*(x3)+0.75*(x4)$) -- ($0.5*(y3)+0.5*(y4)$);
\draw[line width= 1 pt] ($0.5*(x3)+0.5*(x4)$) -- ($0.5*(y3)+0.5*(y4)$);
\draw[line width= 1 pt] ($0.75*(x3)+0.25*(x4)$) -- ($0.5*(y3)+0.5*(y4)$);

\draw[line width= 1 pt] ($0.25*(x5)+0.75*(x6)$) -- ($0.5*(y5)+0.5*(y6)$);
\draw[line width= 1 pt] ($0.5*(x5)+0.5*(x6)$) -- ($0.5*(y5)+0.5*(y6)$);
\draw[line width= 1 pt] ($0.75*(x5)+0.25*(x6)$) -- ($0.5*(y5)+0.5*(y6)$);

\node[] at ($0.5*(x1)+0.5*(x2)+(-0.5,0)$) {\small $X_1'$};
\node[] at ($0.5*(x3)+0.5*(x4)+(-0.5,0)$) {\small $X_2'$};
\node[] at ($0.5*(x4)+0.5*(x5)+(-0.5,0.15)$) {\small $\vdots$};
\node[] at ($0.5*(x5)+0.5*(x6)+(-0.5,0)$) {\small $X_{t'}'$};

\node[] at ($0.5*(y1)+0.5*(y2)+(0.5,0)$) {\small $Y_1'$};
\node[] at ($0.5*(y3)+0.5*(y4)+(0.5,0)$) {\small $Y_2'$};
\node[] at ($0.5*(y4)+0.5*(y5)+(0.5,0.15)$) {\small $\vdots$};
\node[] at ($0.5*(y5)+0.5*(y6)+(0.5,0)$) {\small $Y_{t'}'$};

\node[] at ($(x_t)+(-0.6,0.1)$) {\small $x_{i+1}^{\text{t}}$};
\node[] at ($(x_b)+(-0.6,0.1)$) {\small $x_{i+1}^{\text{b}}$};
\node[] at ($(y_t)+(0.6,0.1)$) {\small $y_{i+1}^{\text{t}}$};
\node[] at ($(y_b)+(0.6,0.1)$) {\small $y_{i+1}^{\text{b}}$};




\end{tikzpicture}
\caption{Case 1 of \Cref{lem:splitting_process}. Note that trivial paths through using any two distinct edges of a star always have different lengths.}
\label{fig:split-case-1}
\end{minipage}\hfill
\begin{minipage}[t]{0.27\textwidth}
\centering
\captionsetup{width=\textwidth}
\begin{tikzpicture}[scale=0.8]

\defPt{0}{0}{x_b}
\defPt{0}{5.5}{x_t}
\defPtm{($0.075*(x_b)+0.925*(x_t)$)}{x6}
\defPtm{($0.275*(x_b)+0.725*(x_t)$)}{x5}

\defPtm{($0.425*(x_b)+0.575*(x_t)$)}{x4}
\defPtm{($0.625*(x_b)+0.375*(x_t)$)}{x3}

\defPtm{($0.725*(x_b)+0.275*(x_t)$)}{x2}
\defPtm{($0.925*(x_b)+0.075*(x_t)$)}{x1}

\defPt{3.5}{0}{y_b}
\defPt{3.5}{5.5}{y_t}

\defPtm{($0.075*(y_b)+0.925*(y_t)$)}{y6}
\defPtm{($0.275*(y_b)+0.725*(y_t)$)}{y5}

\defPtm{($0.425*(y_b)+0.575*(y_t)$)}{y4}
\defPtm{($0.625*(y_b)+0.375*(y_t)$)}{y3}

\defPtm{($0.725*(y_b)+0.275*(y_t)$)}{y2}
\defPtm{($0.925*(y_b)+0.075*(y_t)$)}{y1}

\draw[color=white, fill=black!50!,fill opacity=0.3] ($0.25*(x1)+0.75*(x2)$) -- ($0.5*(y1)+0.5*(y2)$) -- ($0.75*(y1)+0.25*(y2)$) -- ($0.5*(x1)+0.5*(x2)$) -- cycle;
\draw[color=white, fill=black!50!,fill opacity=0.3] ($0.5*(x3)+0.5*(x4)$) -- ($0.25*(y3)+0.75*(y4)$) -- ($0.5*(y3)+0.5*(y4)$) -- ($0.75*(x3)+0.25*(x4)$) -- cycle;
\draw[color=white, fill=black!50!,fill opacity=0.3] ($0.25*(x5)+0.75*(x6)$) -- ($0.5*(y5)+0.5*(y6)$) -- ($0.75*(y5)+0.25*(y6)$) -- ($0.5*(x5)+0.5*(x6)$) -- cycle;

\draw[] (x_b) \smvx;
\draw[] (x_t) \smvx;
\draw[] (y_b) \smvx;
\draw[] (y_t) \smvx;

\draw[line width= 0.75 pt] (x_b) -- (x_t);
\draw[line width= 0.75 pt] (y_b) -- (y_t);


\foreach \i in {1,...,6}
{
\draw[line width= 1 pt, dashed] (x\i) -- (y\i);
}

 \draw[line width= 1 pt,dotted] ($0.25*(x1)+0.75*(x2)$) -- ($0.5*(y1)+0.5*(y2)$);
 \draw[line width= 1 pt,dotted] ($0.5*(x1)+0.5*(x2)$) -- ($0.75*(y1)+0.25*(y2)$);
 \draw[line width= 1 pt, red] ($0.75*(x1)+0.25*(x2)$) -- ($0.25*(y1)+0.75*(y2)$);

 \draw[line width= 1 pt,red] ($0.25*(x3)+0.75*(x4)$) -- ($0.75*(y3)+0.25*(y4)$);
 \draw[line width= 1 pt,dotted] ($0.5*(x3)+0.5*(x4)$) -- ($0.25*(y3)+0.75*(y4)$);
 \draw[line width= 1 pt,dotted] ($0.75*(x3)+0.25*(x4)$) -- ($0.5*(y3)+0.5*(y4)$);

 \draw[line width= 1 pt,dotted] ($0.25*(x5)+0.75*(x6)$) -- ($0.5*(y5)+0.5*(y6)$);
 \draw[line width= 1 pt,dotted] ($0.5*(x5)+0.5*(x6)$) -- ($0.75*(y5)+0.25*(y6)$);
 \draw[line width= 1 pt,red] ($0.75*(x5)+0.25*(x6)$) -- ($0.25*(y5)+0.75*(y6)$);

\node[] at ($0.5*(x1)+0.5*(x2)+(-0.5,0)$) {\small $X_1'$};
\node[] at ($0.5*(x3)+0.5*(x4)+(-0.5,0)$) {\small $X_2'$};
\node[] at ($0.5*(x4)+0.5*(x5)+(-0.5,0.15)$) {\small $\vdots$};
\node[] at ($0.5*(x5)+0.5*(x6)+(-0.5,0)$) {\small $X_{t'}'$};

\node[] at ($0.5*(y1)+0.5*(y2)+(0.5,0)$) {\small $Y_1'$};
\node[] at ($0.5*(y3)+0.5*(y4)+(0.5,0)$) {\small $Y_2'$};
\node[] at ($0.5*(y4)+0.5*(y5)+(0.5,0.15)$) {\small $\vdots$};
\node[] at ($0.5*(y5)+0.5*(y6)+(0.5,0)$) {\small $Y_{t'}'$};

\node[] at ($(x_t)+(-0.6,0.1)$) {\small $x_{i+1}^{\text{t}}$};
\node[] at ($(x_b)+(-0.6,0.1)$) {\small $x_{i+1}^{\text{b}}$};
\node[] at ($(y_t)+(0.6,0.1)$) {\small $y_{i+1}^{\text{t}}$};
\node[] at ($(y_b)+(0.6,0.1)$) {\small $y_{i+1}^{\text{b}}$};




\end{tikzpicture}
\caption{Case 2 of \Cref{lem:splitting_process}. Edges $(x_j,y_j)$ are depicted in red. Shaded regions have many chords and will become $X_j'',Y_j''$.}
\label{fig:split-case-2}
\end{minipage}
\end{figure}

For each $j$, consider all chords in $E(X_j',Y_j')$ which interlace $(x_j,y_j)$. Observe that they can be of two types: either their $X$-vertex is above $x_j$ and their $Y$-vertex is below $y_j$, or vice versa. Consider only the interlacing chords of the more frequent type and define $X_j'',Y_j''$ to be the smallest subsections of $X'_j,Y'_j$ which contain all these chords. In particular, $x_j \notin X_j''$, $y_j \notin Y_j''$, and $(X''_j,Y''_j)$ is interlaced by $(x_j,y_j)$ as a section pair. Moreover, we still have $\sum_{j=1}^{t} e(X_j'',Y_j'') \ge \Omega(m/\log m)$ chords altogether. Consider the intervals of the form $[k,2k]$ with $k$ a power of $2$ and $k \leq m/2$. Each $e(X_j'',Y_j'')$ belongs to such an interval, of which there are $O(\log m)$. So by averaging, there exists $1 \le k \le m/2$ and a subset $S \subseteq [t]$ such that for any $j \in S$ we have $e(X_j'',Y_j'') \in [k,2k]$, and $\sum_{j \in S} e(X_j'',Y_j'') \ge \Omega(m/\log^2 m)$. Furthermore, by deleting $e(X_j'',Y_j'') - k \leq \frac{1}{2}e(X_j'',Y_j'')$ chords from each pair $X''_j,Y''_j$ with $j \in S$ (so in total at most a half of all such chords), we may assume for convenience that $e(X_j'',Y_j'')=k$ for all $j \in S$. 
Observe in particular that 
\begin{equation}\label{eq:|S|k}
|S|k \ge \Omega(m/\log^2 m).
\end{equation}

We now construct a collection of \tbpps{} for $X_j',Y_j'$ for each $j \in S$. Depending on which of the two types above was more frequent, we have two essentially symmetric cases. In the first one, $X_j''$ is above $x_j$ and $Y_j''$ is below $y_j$. 
Here we will take our path pairs to all contain the path which goes along $X_j'$ from the bottom to $x_j$, uses the chord $(x_j,y_j)$, and then goes along $Y'_j$ from $y_j$ to the top of $Y_j'$. The second path in each pair is going to follow $X_j'$ from the top until the top of $X_j''$, then follow a path inside $X_j'' \cup Y_j''$ from the top of $X_j''$ to the bottom of $Y_j''$, and then follow along $Y_j'$ to its bottom. Symmetrically, in the second case, our path pairs will all use the path through $(x_j,y_j)$ from the top of $X_j'$ to the bottom of $Y_j'$, with the other path (from the bottom of $X_j'$ to the top of $Y_j'$) passing through $X_j'' \cup Y_j''$. See \Cref{fig:two-path-pairs} for these two symmetric cases. Since the argument is analogous in both cases, from now on, let us assume we are in the first case.

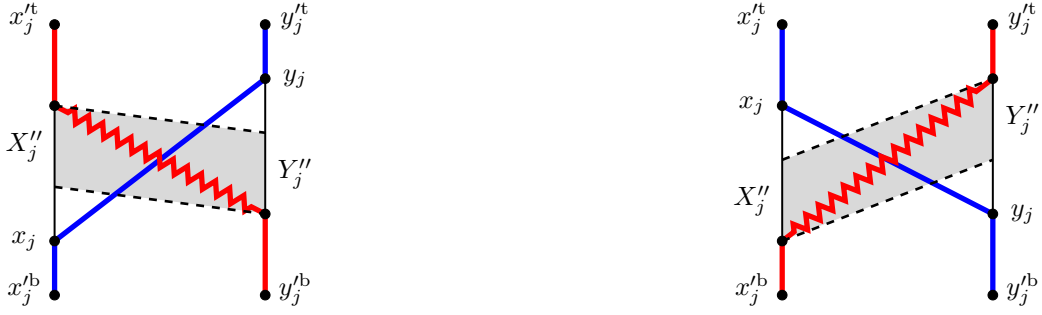
\begin{figure}
\RawFloats
\begin{minipage}[t]{0.45\textwidth}
\centering
\captionsetup{width=\textwidth}
\begin{tikzpicture}[scale=0.8]

\defPt{0}{0}{x_b}
\defPt{0}{4.5}{x_t}

\defPtm{($0.3*(x_b)+0.7*(x_t)$)}{x3}
\defPtm{($0.6*(x_b)+0.4*(x_t)$)}{x2}
\defPtm{($0.8*(x_b)+0.2*(x_t)$)}{x1}

\defPt{3.5}{0}{y_b}
\defPt{3.5}{4.5}{y_t}

\defPtm{($0.3*(y_t)+0.7*(y_b)$)}{y2}
\defPtm{($0.6*(y_t)+0.4*(y_b)$)}{y3}
\defPtm{($0.8*(y_t)+0.2*(y_b)$)}{y1}

\draw[color=white, fill=black!50!,fill opacity=0.3] (x2) -- (x3) -- (y3) -- (y2) -- cycle;

\draw[line width= 0.75 pt] (x_b) -- (x_t);
\draw[line width= 0.75 pt] (y_b) -- (y_t);

\draw[line width= 0.75 pt] (x1) -- (y1);

\draw[line width= 2 pt, blue] (y_t) -- (y1) -- (x1) -- (x_b);
\draw[line width= 2 pt, red] (x_t) -- (x3);
\draw[line width= 2 pt, red] (y_b) -- (y2);
\draw[spring, line width= 2 pt, red] (x3) -- (y2);

\foreach \i in {2,3}
{
\draw[line width= 1 pt, dashed] (x\i) -- (y\i);
}

\draw[] (x_b) \smvx;
\draw[] (x_t) \smvx;

\draw[] (x1) \smvx;
\draw[] (y1) \smvx;
\draw[] (x3) \smvx;
\draw[] (y2) \smvx;
\draw[] (y_b) \smvx;
\draw[] (y_t) \smvx;

\node[] at ($0.5*(x2)+0.5*(x3)+(-0.5,0)$) {\small $X_j''$};
\node[] at ($0.5*(y2)+0.5*(y3)+(0.5,0)$) {\small $Y_j''$};

\node[] at ($(x1)+(-0.5,0)$) {\small $x_j$};
\node[] at ($(y1)+(0.5,0)$) {\small $y_j$};
\node[] at ($(x_t)+(-0.5,0.1)$) {\small $x_j'^{\text{t}}$};
\node[] at ($(x_b)+(-0.5,0.1)$) {\small $x_j'^{\text{b}}$};
\node[] at ($(y_t)+(0.5,0.1)$) {\small $y_j'^{\text{t}}$};
\node[] at ($(y_b)+(0.5,0.1)$) {\small $y_j'^{\text{b}}$};


\end{tikzpicture}
\end{minipage}\hfill
\begin{minipage}[t]{0.45\textwidth}
\centering
\captionsetup{width=\textwidth}
\begin{tikzpicture}[scale=0.8]

\defPt{0}{0}{x_b}
\defPt{0}{4.5}{x_t}

\defPtm{($0.3*(x_b)+0.7*(x_t)$)}{x1}
\defPtm{($0.5*(x_b)+0.5*(x_t)$)}{x3}
\defPtm{($0.8*(x_b)+0.2*(x_t)$)}{x2}

\defPt{3.5}{0}{y_b}
\defPt{3.5}{4.5}{y_t}

\defPtm{($0.3*(y_t)+0.7*(y_b)$)}{y1}
\defPtm{($0.5*(y_t)+0.5*(y_b)$)}{y2}
\defPtm{($0.8*(y_t)+0.2*(y_b)$)}{y3}

\draw[color=white, fill=black!50!,fill opacity=0.3] (x2) -- (x3) -- (y3) -- (y2) -- cycle;

\draw[line width= 0.75 pt] (x_b) -- (x_t);
\draw[line width= 0.75 pt] (y_b) -- (y_t);

\draw[line width= 0.75 pt] (x1) -- (y1);

\draw[line width= 2 pt, blue] (y_b) -- (y1) -- (x1) -- (x_t);
\draw[line width= 2 pt, red] (x_b) -- (x2);
\draw[line width= 2 pt, red] (y_t) -- (y3);
\draw[spring, line width= 2 pt, red] (x2) -- (y3);

\foreach \i in {2,3}
{
\draw[line width= 1 pt, dashed] (x\i) -- (y\i);
}

\draw[] (x_b) \smvx;
\draw[] (x_t) \smvx;

\draw[] (x1) \smvx;
\draw[] (y1) \smvx;
\draw[] (x2) \smvx;
\draw[] (y3) \smvx;

\draw[] (y_b) \smvx;
\draw[] (y_t) \smvx;

\node[] at ($0.5*(x2)+0.5*(x3)+(-0.5,0)$) {\small $X_j''$};
\node[] at ($0.5*(y2)+0.5*(y3)+(0.5,0)$) {\small $Y_j''$};

\node[] at ($(x1)+(-0.5,0)$) {\small $x_j$};
\node[] at ($(y1)+(0.5,0)$) {\small $y_j$};
\node[] at ($(x_t)+(-0.5,0.1)$) {\small $x_j'^{\text{t}}$};
\node[] at ($(x_b)+(-0.5,0.1)$) {\small $x_j'^{\text{b}}$};
\node[] at ($(y_t)+(0.5,0.1)$) {\small $y_j'^{\text{t}}$};
\node[] at ($(y_b)+(0.5,0.1)$) {\small $y_j'^{\text{b}}$};

\end{tikzpicture}
\end{minipage}
\caption{Two options for how $(x_j,y_j)$ can interlace $X_j'',Y_j''$ and the corresponding \tbpps{} we use.}
\label{fig:two-path-pairs}

\end{figure}

Observe that if we find a collection of path lengths inside $X_j'' \cup Y_j''$ going from the top of $X_j''$ to the bottom of $Y_j''$, then we have the same collection of lengths of \tbpps{} for $X_j',Y_j'$ up to a possible translation by a fixed constant. This is since the \tbpps{} we consider only differ in which path they use between the top of $X_j''$ and the bottom of $Y_j''$.

It will be convenient to consider the ``flipped'' section pair $X_j'',\bar{Y_j''}$, which we obtain by reversing the path $Y_j''$ (and denote it as $\bar{Y_j''}$). In particular, the top of $\bar{Y_j''}$ is the bottom of $Y_j''$ and any top to top path in $X_j'',\bar{Y_j''}$ is just a path between the top of $X_j''$ and the bottom of $Y_j''$. 

\begin{claim}\label{claim-dichotomy}
For each $j \in S$, one of the following holds.
\begin{enumerate}    
    \item\label{itm:2} There are at least $2k^{1-(i+1)\eps}$ lengths, any two at least $k^{i\eps}$ apart, of top to top paths in $X_j'',\bar{Y_j''}$.
    \item\label{itm:3} There is an interval $I_j \subseteq \mathbb{N}$ of size $k^{i\eps}$, such that the length with respect to $X_j'',\bar{Y_j''}$ of at least $k^{(i+1)\eps}/4$ chords belongs to $I_j$.
\end{enumerate}
\end{claim}

\begin{proof}
All our chord lengths in this proof will be with respect to $X_j'',\bar{Y_j''}$. Take a minimal collection of disjoint intervals of size $k^{i\eps}$ each, which cover all chord lengths (i.e., the length with respect to $X_j'',\bar{Y_j''}$ of every chord in $E(X_j'',\bar{Y_j''})$ belongs to one of these intervals). For each such interval $I$, either $I$ can be taken as the desired $I_j$, or there must be less than $k^{(i+1)\eps}/4$ chords with length in $I$. As there are $k$ chords in $E(X_j'',\bar{Y_j''})$, this means that we needed to use at least $4k^{1-(i+1)\eps}$ intervals to cover all chord lengths. Since we took a minimal collection, each interval is non-empty. By picking one length from every second interval, we find $2k^{1-(i+1)\eps}$ chord lengths, any two separated by one of our intervals (since we took a length from every second interval). Hence, any two such lengths are at least $k^{i\eps}$ apart. For each such length, take a chord of this length and take the trivial path between the top of $X''_j$ and the top of $\bar{Y_j''}$ corresponding to this chord. This gives the $2k^{1-(i+1)\eps}$ path lengths required by item 1. 
\end{proof}

Let $S_1$ be the subset of $S$ consisting of $j$ for which case $1$ of the above claim occurred. Let us first assume this was the more common case, namely that $|S_1|\ge |S|/2$.

\begin{claim}\label{claim:far apart lengths}
If $|S_1|\ge |S|/2$, then there are $\Omega(m^{1-(i+1)\eps}/\log^2 m)$ lengths of \tbpps{} for \linebreak $X_{i+1},Y_{i+1}$ which are all at least $\Omega(m^{i\eps})$ apart. 
\end{claim}
\begin{proof}
For each $j \in S_1$, consider the $2k^{1-(i+1)\eps}$ top to top paths for $X''_j,\bar{Y_j''}$ given by item 1 of Claim \ref{claim-dichotomy}. 
Each of these paths is a path from the top of $X_j''$ to the bottom of $Y_j''$. Hence, as explained above, this gives rise to \tbpps{} for $X_j',Y_j'$ with at least 
$2k^{1-(i+1)\eps}\ge 2$ (recall that $i \le 1/\eps-1$) lengths, any two of which are at least $k^{i\eps}$ apart. Denote the set of these lengths by $L_j$. Now observe that by choosing a path-pair for $X'_j,Y'_j$ of length $\ell_{j}\in L_j$ for each $j \in S_1$, and joining these paths with subpaths of $X,Y$, we obtain a \tbpp{} for $X_{i+1},Y_{i+1}$ of length $\sum_{j \in S_1} \ell_{j}+C_0$, where $C_0$ is the sum of the lengths of the subpaths we used to join our \tbpps{} into a \tbpp{} for $X_{i+1},Y_{i+1}$; so $C_0$ is independent of our choices for $\ell_j$. \Cref{lem:sum-sets} now tells us that we can find $|S_1|k^{1-(i+1)\eps}$ lengths of \tbpps{} for $X_{i+1},Y_{i+1}$ which are at least $k^{i\eps}$ apart. By taking every $m^{i\eps}/k^{i\eps}$-th such length, we get at least $\left\lfloor |S_1|k/(m^{i\eps}/k^{i\eps}) \right\rfloor \geq
\frac12 |S_1|k^{1-\eps}/m^{i\eps}\ge
\frac14 |S|k^{1-\eps}/m^{i\eps} \ge \frac1{4} |S|k/m^{(i+1)\eps} \ge \Omega(m^{1-(i+1)\eps}/\log^2 m)$ lengths which are at least $m^{i\eps}$ apart, where the last inequality uses \eqref{eq:|S|k}. This proves the claim.
\end{proof}

Let us now use Claim \ref{claim:far apart lengths} to complete the proof in the case that $|S_1| \geq |S|/2$. 
Observe that we may combine our \tbpps{} for $X_{i+1},Y_{i+1}$ (provided by Claim \ref{claim:far apart lengths}) with top-to-top below-paths from $x_i^{\text{t}}$ to $y_i^{\text{t}}$, provided by the inductive assumption, to obtain below-paths from $x_{i+1}^{\text{t}}$ to $y_{i+1}^{\text{t}}$, which can then be extended along $X,Y$ into top-to-top paths for $X,Y$. Each such path length is of the form $\ell_1+\ell_2+C_0$, where $\ell_1$ may be chosen among our $\Omega(m^{1-(i+1)\eps}/\log^2m)$ lengths of \tbpps{} for $X_{i+1},Y_{i+1}$, which are at least $m^{i\eps}$ apart; $\ell_2$ may be chosen among $\Omega(m^{i\eps}/(C\log m)^{(2i-2)})$ lengths which belong in an interval of size $m^{i\eps}$ (as guaranteed by the induction hypothesis); and $C_0$ is independent of the choice of $\ell_1,\ell_2$. Therefore, by \Cref{lem:spread-close} we get at least $\Omega(m^{1-\eps}/(C\log m)^{2i}) \ge  \Omega(m^{1-4\eps})$ (see \eqref{eq:log-epsilons}) different lengths of top-to-top paths for $X,Y$, contradicting our assumption that the number of such lengths is smaller than $\Omega(m^{1-4\eps})$. This completes the proof in the case $|S_1| \geq |S|/2$.

So we must have the second case in Claim \ref{claim-dichotomy} occurring more often, namely that $S_2:=S \setminus S_1$ has size at least $|S|/2$. 
By definition, 
for any $j \in S_2$ there is an interval $I_j$ of size $k^{i\eps}$ such that at least $k^{(i+1)\eps}/4$ chords in $E(X''_i,\bar{Y_{j}''})$ have their length with respect to the subsection pair $X_{j}'', \bar{Y_{j}''}$ belong in $I_j$. Applying \Cref{lem:close-length chords} (with $J = I_j$), we find $\Omega(k^{\eps})$ top-to-top paths in $X_{j}'', \bar{Y_{j}''}$ whose lengths are $\Theta(k^{i\eps})$ apart. As before, this gives us \tbpps{} for $X_{j}',Y_j'$ of lengths which are $\Theta(k^{i\eps})$ apart.
Once again, we can choose one such length for each $X_j',Y_j'$ and combine the corresponding \tbpps{} into a \tbpp{} for $X_{i+1},Y_{i+1}$, with the resulting length being the sum of the lengths of the \tbpps{} we chose plus a fixed number (which is independent of our choices). \Cref{lem:sum-sets} then tells us that we can find \tbpps{} for $X_{i+1},Y_{i+1}$ of $\Omega(|S_2|k^{\eps})\ge \Omega(|S|k^{\eps})$ lengths which are $\Theta(k^{i \eps})$ apart, say between $c_1 k^{i \eps}$ and $c_2 k^{i \eps}$. Once again we only consider every $m^{i\eps}/c_1k^{i\eps}$-th length, 
thus guaranteeing that every two such lengths are at least 
$m^{i\eps}$ and at most $\frac{c_2}{c_1}m^{i\eps} =   O(m^{i\eps})$ apart. 
The number of these lengths is at least $c_1k^{i\eps}/m^{i\eps}\cdot \Omega(|S|k^{\eps})\ge \Omega(k^{(i+1)\eps-1} m^{1-i\eps}/\log^2 m)\ge \Omega(m^{\eps}/\log^2 m)$, with the first inequality using \eqref{eq:|S|k}. Take the first $\Theta(m^{\eps}/\log^2 m)$ of them.
Since these lengths are at most $O(m^{i\eps})$ apart, they are all contained in an interval of size $O(m^{(i+1)\eps})$.
Now similarly as in the previous case, we can combine any of these \tbpps{} with our below paths from $x_i^{\text{t}}$ to $y_i^{\text{t}}$, which are provided by the induction hypothesis. \Cref{lem:spread-close} ensures that by doing so, we get at least 
$\Omega(m^{\eps}/\log^2 m) \cdot \frac{m^{i\eps}}{6(C\log m)^{2(i-1)}} = 
\Omega\left(\frac{m^{(i+1)\eps}}{C^{2i-2}(\log m)^{2i}}\right)$ lengths which all belong to an interval of size $O(m^{(i+1)\eps})$. By potentially losing a constant proportion of the lengths, we can guarantee they all lie in an interval of size at most $m^{(i+1)\eps}$, and by choosing $C$ to be large enough, we will have at least $\frac{m^{(i+1)\eps}}{(C\log m)^{2i}}$ of them,  completing the induction step and hence \nolinebreak the \nolinebreak proof.
\end{proof}

We are now in a position to prove Theorem \ref{thm:main}, which we rephrase in the following quantitative form:

\begin{thm}\label{thm:main-with-numbers}
Every $n$-vertex Hamiltonian graph with minimum degree $3$ has cycle spectrum of size at least $\Omega\Big({n}/{2^{6\sqrt{\log n \log \log n}}}\Big).$
\end{thm}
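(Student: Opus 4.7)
My plan is to derive Theorem~\ref{thm:main-with-numbers} from Lemma~\ref{lem:main-part} after a preliminary graph-theoretic setup. The first step is to split the Hamilton cycle $H$ into four arcs $X, A_1, Y, A_2$ (in cyclic order along $H$) so that the section pair $(X, Y)$ has $\Omega(n)$ chords between $X$ and $Y$, and each vertex of $X$ is incident to at most one such chord. Since $\delta(G) \geq 3$ and $H$ contributes exactly two to each vertex's degree, $G$ has at least $n/2$ chord edges. An averaging argument over the four split points gives $\Omega(n)$ chords in a typical section pair \emph{unless} the chord structure is degenerate, e.g., most chords span short arcs of $H$. In that case, I would first reroute $H$ via a 2-opt style swap on a pair of interlacing chords to produce a new Hamilton cycle with respect to which many chords become long, so that the averaging succeeds. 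Pruning to keep at most one chord per vertex of $X$ loses only a constant factor; and if some vertex of $Y$ happens to have very high chord-degree, the star of chords at that vertex already yields $\Omega(n)$ trivial paths of different lengths and hence $\Omega(n)$ cycle lengths directly, so we may assume the max chord-degree is small and the hypothesis of Lemma~\ref{lem:initial_split} is met.

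Given such a section pair $(X, Y)$, I would apply Lemma~\ref{lem:initial_split} with $k = \Theta(\sqrt{\log n})$ to obtain either a parallel or an interlacing collection of subsection pairs $X_1, Y_1; \dots; X_k, Y_k$ with $e(X_i, Y_i) \geq \Omega(n/\log^2 n)$; call this quantity $m$. In the interlacing case, I would reverse the orientation of $Y$ (swapping its top and bottom labels), converting the collection into a parallel one; this changes the target endpoint from $y^{\text{t}}$ to $y^{\text{b}}$ but the rest of the argument is symmetric. Since $k \geq \sqrt{\log m}$ for large $n$, Lemma~\ref{lem:main-part} applies and produces $\Omega(m/2^{4\sqrt{\log m \log\log m}})$ distinct lengths of paths from $x^{\text{t}}$ to $y^{\text{t}}$ inside $X \cup Y$.

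To convert the paths into cycles, I would use the arc $A_1$ (lying strictly between $X$ and $Y$ in the cyclic order along $H$) as a fixed closure. Its endpoints are adjacent in $H$ to $x^{\text{t}}$ and $y^{\text{t}}$, so together with those two Hamilton edges, $A_1$ gives a path from $y^{\text{t}}$ back to $x^{\text{t}}$ of fixed length $|A_1| + 2$, vertex-disjoint from the interior of any path $P$ produced by Lemma~\ref{lem:main-part}. Concatenating $P$ with this closure yields a cycle of length $|P| + |A_1| + 2$, so distinct path lengths give distinct cycle lengths. Substituting $m = \Omega(n/\log^2 n)$ and using $\log m = \log n - O(\log\log n)$ gives $m/2^{4\sqrt{\log m \log\log m}} \geq \Omega(n/2^{6\sqrt{\log n \log\log n}})$ for large $n$, since the polylogarithmic loss is absorbed by the gap between the constants $4$ and $6$ in the exponent. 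The main obstacle is the preliminary splitting step: obtaining a section pair with $\Omega(n)$ cross-chords is straightforward only when the chord distribution is not too concentrated on short Hamilton-arcs, and the rerouting trick (which the paper explicitly flags as non-trivial) is required otherwise. Once a good section pair is in hand, the two lemmas do the heavy lifting essentially mechanically.
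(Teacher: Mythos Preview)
Your proposal has two genuine gaps.

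\textbf{The interlacing case.} Your fix of simply reversing the orientation of $Y$ does not work. After the flip, Lemma~\ref{lem:main-part} yields paths from $x^{\text{t}}$ to the \emph{old} $y^{\text{b}}$. But neither arc $A_1$ (joining $x^{\text{t}}$ to $y^{\text{t}}$) nor $A_2$ (joining $x^{\text{b}}$ to $y^{\text{b}}$) connects $x^{\text{t}}$ to $y^{\text{b}}$, so you cannot close these paths into cycles with a single external arc, and any attempted closure through the remaining pieces of $X$ or $Y$ will collide with vertices already used by the path. This is precisely why the paper does \emph{not} relabel: instead it sacrifices two of the $k$ interlacing subsection pairs, using one chord $(x_1,y_1)\in E(X_1,Y_1)$ and one chord $(x_2,y_2)\in E(X_2,Y_2)$ to reroute the host path so that the remaining pairs $X_3,Y_3;\ldots;X_k,Y_k$ become parallel with respect to a new section pair $X',Y'$ whose top endpoints are still $x^{\text{t}}$ and $y^{\text{t}}$. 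That last clause is the whole point: the rerouting preserves the endpoints needed for closure.

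\textbf{The initial splitting.} The hand-waved ``2-opt swap'' does not salvage the degenerate case where most chords have length $\ell\ll n$. Two short interlacing chords are necessarily within distance $O(\ell)$ of each other on $H$; a 2-opt on them rearranges only an arc of length $O(\ell)$, so almost all other chords remain short with respect to the new Hamilton cycle, and you still cannot produce a single section pair $(X,Y)$ with $\Omega(n)$ crossing chords. The paper handles this by a genuinely different decomposition: it fixes $\ell$ so that $\Omega(n/\log n)$ chords have length in $[\ell,2\ell]$, partitions $H$ into $\Theta(n/\ell)$ blocks $B_j$ of length $4\ell$ each (with a random shift to make many chords land with endpoints in opposite halves of the same block), applies Lemma~\ref{lem:initial_split} and then Lemma~\ref{lem:main-part} \emph{inside each good block} to get many path lengths between the endpoints of $B_j$, and finally combines these across blocks using Lemma~\ref{lem:sum-sets}. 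Your single-section-pair framework misses this block decomposition, which is essential when chords are short.
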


\begin{proof}

Let us fix a Hamilton cycle $H$ in our graph and a direction for this cycle. We define the length of a chord $\{x,y\}$ to be the length of the shorter of the two paths along the cycle between $x$ and $y$; in particular, the length is always at most $n/2$. 
For every vertex $v$, we fix a single chord $c(v)$ incident to $v$. By the pigeonhole principle, there must exist an $\ell\le n/4$ such that at least $n/\log n$ vertices have the length of their chosen chord in $[\ell,2\ell]$. Now let us partition $H$ into consecutive sections $B_1,\ldots, B_s,R$ (appearing in this order along $H$), where $|B_s|=4\ell$ and $s= \floor{ \frac{n}{4\ell}} \ge 1$, and the starting vertex for $B_1$ is chosen uniformly at random among the vertices of the cycle. We will talk of the first half of $B_j$ (which is just the set of the first $2\ell$ vertices of $B_j$), the first quarter of $B_j$ (the set of the first $\ell$ vertices), etc.

A vertex $v$ is said to be \textit{good} if $c(v)$ is contained in some $B_j$ and if the two endpoints of $c(v)$ belong to different halves of $B_j$. 
We claim that the probability that a given $v$ with $c(v) \in [\ell,2\ell]$ is good is at least $\frac{1}{8}$. To see this, write $c(v) = \{v,w\}$, and consider the shorter path (along the cycle) between $v$ and $w$; recall that the length of $c(v)$ is defined as the length of this path. There are two cases: if when walking on this path from $v$ to $w$, we walk in the direction of the cycle (which was fixed at the beginning), then $v$ is guaranteed to be good if it belongs to the second quarter of some $B_j$. Otherwise, i.e. if we walk against the direction of the cycle, then $v$ is guaranteed to be good if it belongs to the third quarter of some $B_j$. In any case, the probability that $v$ is good is at least $\frac{1}{4} \cdot \frac{|B_1| + \ldots + |B_s|}{n}$. Since $|B_1| + \ldots + |B_s| \geq n/2$, this probability is at least $\frac{1}{8}$, as claimed. 
By linearity of expectation, the expected number of good vertices is at least $\frac{n}{8 \log n}$. Let us now fix an outcome $B_1,\ldots, B_s,R$ for which we have at least this many good vertices.
Let $I$ be the set of $i \in [s]$ such that $B_i$ contains at least $\frac{\ell}{4\log n}$ good vertices. Observe that the sections $B_i$, $i \notin I$, contribute in total at most $s\cdot \frac{\ell}{4\log n} \le \frac{n}{16\log n}$ to the overall number of good vertices. Therefore, at least $\frac{n}{16 \log n}$ good vertices belong to sections $B_i$ with $i \in I$. In particular, $|I| \geq \frac{n}{64 \ell \log n}$, since each $B_i$ has $4\ell$ vertices.

For each $i \in I$, we will find inside $B_i$ paths of at least $1+ \Omega(\ell/2^{5\sqrt{\log n \log \log n}})$ different lengths joining the endpoints of $B_i$. Let us first complete the proof using this. To do so, choose for each $i \in I$, one of these paths joining the endpoints of $B_i$, and combine these paths into a cycle using $H$. The length of the resulting cycle is equal to the sum of the lengths of the paths we chose plus a fixed number, independent of our choices (i.e. the total length of the pieces of $H$ we used to join the paths). So by \Cref{lem:sum-sets}, these cycles take at least $\Omega\left(\frac{n}{ \ell \log n}\cdot {\ell}/{2^{5\sqrt{\log n \log \log n}}}\right) \ge \Omega\left({n}/{2^{6\sqrt{\log n \log \log n}}}\right)$ different lengths, as desired.

What remains to be proved is that for $i \in I$, $B_i$ contains paths of $1+ \Omega(\ell/2^{5\sqrt{\log n \log \log n}})$ different lengths joining its endpoints. 
By definition, $B_i$ has at least $\frac{\ell}{4\log n}$ good vertices.
Since $B_i$ contains at least one chord, we know it has at least two path lengths (one coming from the length of $B_i$ itself and the other from the path only using a single chord). In particular, we may assume $\ell \gg 2^{5\sqrt{\log n \log \log n}}$ or we are done. 
Let $X$ denote the first half of $B_i$ and $Y$ its second half. We think of $X,Y$ as a section pair (omitting the middle edge of $B_i$), and consider the endpoints of $B_i$ to be the top vertices of $X$ and $Y$.
By definition, every chord $c(v)$ which corresponds to a good vertex $v$ has one endpoint in $X$ and one in $Y$. Let us assume without loss of generality that $X$ contains at least half (so at least $\frac{\ell}{8\log n}$) of all good vertices in $B_i$. Delete any chord which was not chosen by one of the good vertices in $X$. This way, we make sure that every vertex in $X$ is incident to at most one chord. Also, at least $\frac{\ell}{8\log n}$ chords remain. 

Let us now apply \Cref{lem:initial_split} to this section-pair $X,Y$ with parameter $m=\frac{\ell}{8\log n}$ and $k=\sqrt{\log n}$, and thus obtain either a vertex $v \in Y$ incident to at least $\Omega\left({\ell}/{\log^2 n}\right)$ chords, or an interlacing or parallel collection of $k$ subsection pairs $X_1,Y_1;\ldots; X_k,Y_k$, each containing at least $\Omega\left({\ell}/{\log^3 n}\right)$ chords.  

Suppose first that there is a vertex $v \in Y$ incident to at least $\Omega\left({\ell}/{\log^2 n}\right)$ chords $(v,x)$, $x \in X$. Observe that all these chords have different lengths. Now, for each such chord, consider the path between the endpoints of $B_i$, which uses only this chord and no others. 
These are paths of $\Omega\left({\ell}/{\log^2 n}\right)$ different lengths between the endpoints of $B_i$, as required. 

Suppose now that we are in the second case, namely that there exists an interlacing or parallel collection $X_1,Y_1;\ldots; X_k,Y_k$ of subsection pairs with $e(X_i,Y_i) \geq \Omega\left({\ell}/{\log^3 n}\right)$ for each $i = 1,\dots,k$. If these subsection pairs are parallel, then we may immediately apply \Cref{lem:main-part} with $t=k$ and $m=\Omega({\ell}/{\log^3 n})$ to get the desired number $\Omega\big( \frac{\ell}{\log^3n}/2^{4\sqrt{\log n \log \log n}} \big) \geq \Omega(\ell/2^{5\sqrt{\log n \log \log n}})$ of different path lengths between the endpoints of $B_i$. 
In the case that $X_1,Y_1;\ldots; X_k,Y_k$ are interlacing, we need an extra step, as follows. Suppose without loss of generality that $X_i$ is below $X_j$ (and hence $Y_i$ is above $Y_j$) for each $1 \leq i < j \leq k$. Take $(x_1,y_1) \in E(X_1,Y_1)$ and $(x_2,y_2) \in E(X_2,Y_2)$. 
Let us now ``reroute'' $B_i$ along the interlacing chords $(x_1,y_1)$,$(x_2,y_2)$; in other words, we consider a new path $B_i'$, obtained by following $X$ from the top until $x_2$, jumping along $(x_2,y_2)$, following $B_i$ (now in the other direction) until $x_1$, jumping along $(x_1,y_1)$, and finally following $Y$ until its top. See \Cref{fig:rerouting} for a picture. 
Note that 
$B_i'$ splits into the section-pair $X' := X[x^{\text{t}},x_2]$, $Y' := B_i[y_2,x_1],(x_1,y_1),Y[y_1,y^{\text{t}}]$, with $X'$ having the same top as $X$ and $Y'$ the same top as $Y$. Observe that $X_k,\ldots, X_3$ still appear in this order along $X'$, but, crucially, $Y_3,\dots,Y_k$ now appear in the reverse order, see \Cref{fig:0.9}. In other words, the subsection pairs $X_3,Y_3;\ldots; X_k,Y_k$ are parallel in $X',Y'$. 
So we may now apply \Cref{lem:main-part} with $t=k-2$ and $m=\Omega({\ell}/{\log^3 n})$ to get the desired number of different path lengths between the endpoints of $B_i$. This completes the proof.  
\begin{figure}
\RawFloats
\begin{minipage}[t]{0.45\textwidth}
\centering
\captionsetup{width=\textwidth}
\begin{tikzpicture}[scale=1]
\defPt{0}{0}{x_b}
\defPt{0}{4.5}{x_t}
\defPtm{($0.05*(x_b)+0.95*(x_t)$)}{x10}
\defPtm{($0.15*(x_b)+0.85*(x_t)$)}{x9}

\defPtm{($0.25*(x_b)+0.75*(x_t)$)}{x8}
\defPtm{($0.35*(x_b)+0.65*(x_t)$)}{x7}

\defPtm{($0.45*(x_b)+0.55*(x_t)$)}{x6}
\defPtm{($0.55*(x_b)+0.45*(x_t)$)}{x5}

\defPtm{($0.65*(x_b)+0.35*(x_t)$)}{x4}
\defPtm{($0.75*(x_b)+0.25*(x_t)$)}{x3}

\defPtm{($0.85*(x_b)+0.15*(x_t)$)}{x2}
\defPtm{($0.95*(x_b)+0.05*(x_t)$)}{x1}

\defPt{4.5}{0}{y_b}
\defPt{4.5}{4.5}{y_t}

\defPtm{($0.05*(y_b)+0.95*(y_t)$)}{y2}
\defPtm{($0.15*(y_b)+0.85*(y_t)$)}{y1}

\defPtm{($0.25*(y_b)+0.75*(y_t)$)}{y4}
\defPtm{($0.35*(y_b)+0.65*(y_t)$)}{y3}

\defPtm{($0.45*(y_b)+0.55*(y_t)$)}{y6}
\defPtm{($0.55*(y_b)+0.45*(y_t)$)}{y5}

\defPtm{($0.65*(y_b)+0.35*(y_t)$)}{y8}
\defPtm{($0.75*(y_b)+0.25*(y_t)$)}{y7}

\defPtm{($0.85*(y_b)+0.15*(y_t)$)}{y10}
\defPtm{($0.95*(y_b)+0.05*(y_t)$)}{y9}

\draw[color=white, fill=black!50!,fill opacity=0.3] (x1) -- (x2) -- (y2) -- (y1) -- cycle;
\draw[color=white, fill=black!50!,fill opacity=0.3] (x3) -- (x4) -- (y4) -- (y3) -- cycle;
\draw[color=white, fill=black!50!,fill opacity=0.3] (x5) -- (x6) -- (y6) -- (y5) -- cycle;
\draw[color=white, fill=black!50!,fill opacity=0.3] (x7) -- (x8) -- (y8) -- (y7) -- cycle;
\draw[color=white, fill=black!50!,fill opacity=0.3] (x9) -- (x10) -- (y10) -- (y9) -- cycle;

\draw[line width= 0.75 pt] (x_b) -- (x_t);
\draw[line width= 0.75 pt] (y_b) -- (y_t);



\foreach \i in {1,...,10}
{
\draw[line width= 1 pt, dashed] (x\i) -- (y\i);
}




 \draw[line width= 2 pt, red] ($0.5*(x3)+0.5*(x4)$) -- ($0.5*(y3)+0.5*(y4)$);
\draw[line width= 2 pt, red] ($0.5*(x1)+0.5*(x2)$) -- ($0.5*(y1)+0.5*(y2)$);
\draw[line width= 2 pt, red] (x_b) -- ($0.5*(x1)+0.5*(x2)$);
\draw[line width= 2 pt, red] (x_t) -- ($0.5*(x3)+0.5*(x4)$);
\draw[line width= 2 pt, red] (y_t) -- ($0.5*(y1)+0.5*(y2)$);
\draw[line width= 2 pt, red] (y_b) -- ($0.5*(y3)+0.5*(y4)$);

\draw[line width= 2 pt, red] (x_b) to[out=-30, in=-150] (y_b);

\node[] at ($0.5*(x1)+0.5*(x2)+(-0.5,0)$) {\small $x_1$};
\node[] at ($0.5*(x3)+0.5*(x4)+(-0.5,0)$) {\small $x_2$};
\node[] at ($0.5*(x5)+0.5*(x6)+(-0.5,0)$) {\small $X_3$};
\node[] at ($0.5*(x7)+0.5*(x8)+(-0.5,0)$) {\small $X_4$};
\node[] at ($0.5*(x9)+0.5*(x10)+(-0.5,0)$) {\small $X_5$};

\node[] at ($0.5*(y1)+0.5*(y2)+(0.5,0)$) {\small $y_1$};
\node[] at ($0.5*(y3)+0.5*(y4)+(0.5,0)$) {\small $y_2$};
\node[] at ($0.5*(y5)+0.5*(y6)+(0.5,0)$) {\small $Y_3$};
\node[] at ($0.5*(y7)+0.5*(y8)+(0.5,0)$) {\small $Y_4$};
\node[] at ($0.5*(y9)+0.5*(y10)+(0.5,0)$) {\small $Y_5$};

\node[] at ($(x_t)+(-0.5,0.1)$) {\small $x^{\text{t}}$};
\node[] at ($(x_b)+(-0.5,-0.1)$) {\small $x^{\text{b}}$};
\node[] at ($(y_t)+(0.5,0.1)$) {\small $y^{\text{t}}$};
\node[] at ($(y_b)+(0.5,-0.1)$) {\small $y^{\text{b}}$};

\draw[] (x_b) \smvx;
\draw[] (x_t) \smvx;
\draw[] (y_b) \smvx;
\draw[] (y_t) \smvx;

\draw[] ($0.5*(y1)+0.5*(y2)$) \smvx;
\draw[] ($0.5*(y3)+0.5*(y4)$) \smvx;
\draw[] ($0.5*(x1)+0.5*(x2)$) \smvx;
\draw[] ($0.5*(x3)+0.5*(x4)$) \smvx;

\end{tikzpicture}
\caption{Rerouting through a path $B'_i$ marked in red.}
\label{fig:rerouting}
\end{minipage}\hfill
\begin{minipage}[t]{0.45\textwidth}
\centering
\captionsetup{width=\textwidth}
\begin{tikzpicture}[scale=1]

\defPt{0}{-1.66}{x_b}
\defPt{0}{2.34}{x_t}
\defPtm{($0.05*(x_b)+0.95*(x_t)$)}{x10}
\defPtm{($0.15*(x_b)+0.85*(x_t)$)}{x9}

\defPtm{($0.25*(x_b)+0.75*(x_t)$)}{x8}
\defPtm{($0.35*(x_b)+0.65*(x_t)$)}{x7}

\defPtm{($0.45*(x_b)+0.55*(x_t)$)}{x6}
\defPtm{($0.55*(x_b)+0.45*(x_t)$)}{x5}

\defPtm{($0.65*(x_b)+0.35*(x_t)$)}{x4}
\defPtm{($0.75*(x_b)+0.25*(x_t)$)}{x3}

\defPtm{($0.85*(x_b)+0.15*(x_t)$)}{x2}
\defPtm{($0.95*(x_b)+0.05*(x_t)$)}{x1}

\defPt{4.5}{-0.3}{y_b}
\defPt{4.5}{3.6}{y_t}

\defPtm{($0.05*(y_b)+0.95*(y_t)$)}{y2}
\defPtm{($0.15*(y_b)+0.85*(y_t)$)}{y1}

\defPtm{($0.25*(y_b)+0.75*(y_t)$)}{y4}
\defPtm{($0.35*(y_b)+0.65*(y_t)$)}{y3}

\defPtm{($0.45*(y_b)+0.55*(y_t)$)}{y6}
\defPtm{($0.55*(y_b)+0.45*(y_t)$)}{y5}

\defPtm{($0.65*(y_b)+0.35*(y_t)$)}{y8}
\defPtm{($0.75*(y_b)+0.25*(y_t)$)}{y7}

\defPtm{($0.85*(y_b)+0.15*(y_t)$)}{y10}
\defPtm{($0.95*(y_b)+0.05*(y_t)$)}{y9}

\draw[color=white, fill=black!50!,fill opacity=0.3] (x5) -- (x6) -- (y10) -- (y9) -- cycle;
\draw[color=white, fill=black!50!,fill opacity=0.3] (x7) -- (x8) -- (y8) -- (y7) -- cycle;
\draw[color=white, fill=black!50!,fill opacity=0.3] (x9) -- (x10) -- (y6) -- (y5) -- cycle;

\draw[line width= 0.75 pt] (y_b) -- (y_t);



\foreach \i in {7,8}
{
\draw[line width= 1 pt, dashed] (x\i) -- (y\i);
}
\draw[line width= 1 pt, dashed] (x5) -- (y9);
\draw[line width= 1 pt, dashed] (x6) -- (y10);
\draw[line width= 1 pt, dashed] (x9) -- (y5);
\draw[line width= 1 pt, dashed] (x10) -- (y6);




\draw[line width= 2 pt, red] (x_t) -- ($0.5*(x3)+0.5*(x4)$);
\draw[line width= 2 pt, red] (y_t) -- ($0.5*(y1)+0.5*(y2)$);
\draw[line width= 2 pt, red] (y_b) -- ($0.5*(y3)+0.5*(y4)$);
\draw[line width= 2 pt, red] ($(y_t)+(0,0.4)$) -- ($0.5*(y3)+0.5*(y4)$);

\draw[line width= 2 pt, red] ($0.5*(x3)+0.5*(x4)$) to[out=-30, in=-150] (y_b);

\node[] at ($0.5*(x3)+0.5*(x4)+(-0.5,0)$) {\small $x_2$};
\node[] at ($0.5*(x5)+0.5*(x6)+(-0.5,0)$) {\small $X_3$};
\node[] at ($0.5*(x7)+0.5*(x8)+(-0.5,0)$) {\small $X_4$};
\node[] at ($0.5*(x9)+0.5*(x10)+(-0.5,0)$) {\small $X_5$};

\node[] at ($0.5*(y1)+0.5*(y2)+(0.5,0)$) {\small $x_1$};
\node[] at ($0.5*(y3)+0.5*(y4)+(0.5,0)$) {\small $y^{\text{b}}$};
\node[] at ($0.5*(y5)+0.5*(y6)+(0.5,0)$) {\small $Y_5$};
\node[] at ($0.5*(y7)+0.5*(y8)+(0.5,0)$) {\small $Y_4$};
\node[] at ($0.5*(y9)+0.5*(y10)+(0.5,0)$) {\small $Y_3$};

\node[] at ($(x_t)+(-0.5,0.1)$) {\small $x^{\text{t}}$};
\node[] at ($0.5*(y2)+0.5*(y3)+(0.5,0)$) {\small $x^{\text{b}}$};
\node[] at ($(y_t)+(0.5,0)$) {\small $y_1$};
\node[] at ($(y_t)+(0.5,0.5)$) {\small $y^{\text{t}}$};
\node[] at ($(y_b)+(0.5,-0.1)$) {\small $y_2$};

\draw[] (x_t) \smvx;
\draw[] (y_b) \smvx;
\draw[] (y_t) \smvx;

\draw[] ($0.5*(y1)+0.5*(y2)$) \smvx;
\draw[] ($0.5*(y3)+0.5*(y4)$) \smvx;
\draw[] ($0.5*(x3)+0.5*(x4)$) \smvx;
\draw[] ($0.5*(y2)+0.5*(y3)$) \smvx;
\draw[] ($(y_t)+(0,0.4)$) \smvx;


\end{tikzpicture}
\caption{Rerouted path $B'_i$ with new relative positions of the section pairs }
\label{fig:0.9}
\end{minipage}
\end{figure}
\end{proof}

\section{Concluding remarks}

In this paper, we proved that an $n$-vertex Hamiltonian graph of minimum degree $3$ has cycles of 
$n^{1-o(1)}$ different lengths, which shows that Conjecture \ref{conj:main} holds asymptotically. Moreover, for the original question of Jacobson and Lehel (dealing with graphs of bounded degree), 
we can use our ideas to get a better quantitative bound of $\frac{n}{\text{polylog}(n)}$. Still, it would be very interesting to prove a linear bound on the number of cycles, 
even in the $3$-regular case. Towards this, we propose the following natural intermediate steps:

\begin{conj}\label{conj:sqrt}
In an $n$-vertex Hamiltonian graph with minimum degree $3$, there are: 
\begin{enumerate}
    \item $\Omega(\sqrt{n})$ cycle lengths all belonging to an interval of size $O(\sqrt{n})$.
    \item $\Omega(\sqrt{n})$ cycle lengths any two of which are at least  $\Omega(\sqrt{n})$ apart.
\end{enumerate}
\end{conj}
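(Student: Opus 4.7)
The plan is to attack the two parts of \Cref{conj:sqrt} separately, reusing the machinery of \Cref{lem:main-part,lem:distant_paths,lem:interlacing two far apart lengths}. The common preliminary reduction follows the opening of the proof of \Cref{thm:main-with-numbers}: using $\delta(G)\geq 3$ together with a pigeonhole on chord length scales, first find a cycle section $B$ of length $\Theta(\ell)$ for some $\ell \leq n/4$, containing $\Omega(\ell/\log n)$ chords whose endpoints lie on opposite halves of $B$. After trimming to at most one chord per vertex on one side, apply \Cref{lem:initial_split} to obtain a parallel or interlacing collection of subsection pairs, each retaining many chords; a rerouting along two interlacing chords (as at the end of Subsection~4.2) reduces us to the purely parallel case. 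The two parts of the conjecture then diverge in how they exploit this structure.

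For part 2 (spread-out lengths), choose $\ell = \sqrt{n}$, giving a parallel collection of subsection pairs inside each of which \Cref{lem:distant_paths} produces two endpoint-paths whose lengths differ by $\Omega(\sqrt{n})$. Joining these paths along pieces of the Hamilton cycle and applying \Cref{lem:sum-sets} converts the individual $\Omega(\sqrt{n})$ gaps into $\Omega(\sqrt{n})$ cycle lengths whose consecutive differences are all at least $\Omega(\sqrt{n})$. The challenge here is to arrange $\Omega(\sqrt{n})$ parallel subsection pairs each containing $\Omega(\sqrt{n})$ chords simultaneously; up to logarithmic factors this is consistent with the $\Omega(n/\log n)$ total chord budget, but it requires controlling how the chord mass distributes across $\sqrt{n}$-scale subsections, probably by a further pigeonhole step inserted into \Cref{lem:initial_split}.

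For part 1 (condensed lengths), the natural target is $\Omega(\sqrt{n})$ pairwise interlacing chords whose lengths differ by only $O(1)$, since \Cref{lem:interlacing two far apart lengths} applied to such a family (with parameter $D=O(1)$) immediately produces $\Omega(\sqrt{n})$ path lengths in an interval of size $O(\sqrt{n})$, which convert to cycle lengths as required. The naive attempt of pigeonholing chord lengths into $O(1)$-buckets fails, since $\Omega(n/\log n)$ chords spread over $\Omega(n)$ possible lengths may yield only polylogarithmically many chords per bucket. A more refined strategy is to partition the cycle into $\Theta(\sqrt{n})$ arcs of length $\Theta(\sqrt{n})$ and, via a two-level pigeonhole on which arc-pair is joined and then on the exact length of the chord within each arc-pair, locate a single arc-pair carrying $\Omega(\sqrt{n})$ chords of nearly identical length; such chords would automatically be pairwise interlacing, since their starting points would all lie inside a window shorter than the chord length itself.

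I expect the main obstacle to lie precisely in this last step. Even under the most favorable top-level distribution, the bottom-level pigeonhole over the $\Theta(\sqrt{n})$ possible length values within a single arc-pair would require super-linear concentration of chords on that arc-pair, which no averaging argument alone can produce. Overcoming this seems to demand a genuinely new structural input---either a Ramsey-type statement forcing long monochromatic patterns of chord lengths along the Hamilton cycle, or a combinatorial route that produces condensed path lengths by combining chords of slightly different lengths whose differences cancel against one another. Absent such an idea, neither part of \Cref{conj:sqrt} seems directly within reach of the present techniques.
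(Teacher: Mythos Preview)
The statement labeled \Cref{conj:sqrt} is an \emph{open conjecture} posed in the concluding remarks; the paper offers no proof of it. The authors present it as a natural intermediate step towards \Cref{conj:main}, observe that it would follow from \Cref{conj:main} (itself open), and note that \Cref{thm:main} only yields the asymptotic version with $\sqrt{n}$ replaced by $n^{1/2-o(1)}$. There is thus no proof in the paper to compare your proposal against.

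Your writeup is not a claimed proof either, but an exploration of whether the paper's toolkit might be sharpened to reach the conjecture, ending with the candid assessment that it cannot. That assessment matches the paper's own position. Two technical remarks on the sketch itself: first, in the reduction borrowed from \Cref{thm:main-with-numbers} the scale $\ell$ is produced by a dyadic pigeonhole on chord lengths and is not at your disposal, so the line ``choose $\ell=\sqrt{n}$'' for part~2 is unjustified as written. Second, even if one could arrange $\ell\approx\sqrt{n}$, the $k^4$ loss in \Cref{lem:initial_split} already rules out splitting into $\Omega(\sqrt{n})$ subsection pairs each carrying $\Omega(\sqrt{n})$ chords from an $O(n/\log n)$ chord budget. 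Your diagnosis of the bottleneck for part~1---that no averaging argument concentrates enough chords onto a single length class---is exactly the obstruction, and the paper offers no way around it.
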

Observe first that Conjecture \ref{conj:main} immediately implies Conjecture \ref{conj:sqrt}. 
On the other hand, a slight strengthening of Conjecture \ref{conj:sqrt} already implies Conjecture \ref{conj:main}. Indeed, suppose that in Conjecture \ref{conj:sqrt}, we replace the assumption of minimum degree $3$ with the assumption that linearly many vertices have degree at least $3$. Furthermore, suppose that instead of starting with a Hamilton cycle we start with a Hamilton path, and instead of cycle lengths we consider path lengths of paths going between the endpoints of the given Hamilton path. 
Then by splitting our Hamilton cycle (or a ``rerouted'' cycle) into two paths (similarly to what is done in the proof of Theorem \ref{thm:main-with-numbers}), we can apply this strengthened version of Conjecture \ref{conj:sqrt} and obtain linearly many cycle lengths (see Section \ref{subsec:sketch} for some details). 

Finally, let us also discuss the following related question, which was already mentioned in passing in the introduction. What is the minimum $C$ such that every $n$-vertex Hamiltonian graph with minimum degree $3$ contains a second cycle of length at least $n - C$? Gir{\~a}o, Kittipassorn and Narayanan \cite{GKN} show that $C = O(n^{4/5}) = o(n)$, and conjecture that actually $C = O(1)$. It is also interesting to ask whether the answer changes if we want in addition to have $C>0$ (i.e., if we forbid this second cycle from being a Hamilton cycle). Such a result might be useful to attack the conjecture of Jacobson and Lehel.


\providecommand{\bysame}{\leavevmode\hbox to3em{\hrulefill}\thinspace}
\providecommand{\MR}{\relax\ifhmode\unskip\space\fi MR }
\providecommand{\MRhref}[2]{%
  \href{http://www.ams.org/mathscinet-getitem?mr=#1}{#2}
}
\providecommand{\href}[2]{#2}

\end{document}